\newcommand{\lcirc}{{\raise-0.15ex\hbox{$\scriptscriptstyle \circ$}}}
\newcommand{\lstar}{{\raise-0.15ex\hbox{$\scriptstyle \ast$}}}
\newcommand{\mathds}{\mathbf}
\newtheorem{theorem}{Theorem}
\newtheorem{lemma}[theorem]{Lemma}
\newtheorem{proposition}[theorem]{Proposition}
\newtheorem{remark}[theorem]{Remark}
\newcommand{\gammab}{{\bar \gamma}}
\begin{document}
\title{The bead process for beta ensembles}
\author{Joseph Najnudel, B\'alint Vir\'ag}
\date{}
\maketitle

\begin{abstract}
The bead process introduced by Boutillier is a countable interlacing of the determinantal sine-kernel (i.e. $\operatorname{Sine}_2$) point processes.
We construct the bead process for general  $\operatorname{Sine}_{\beta}$ processes as an infinite dimensional Markov chain whose transition mechanism is explicitly described.
We show that this process is the microscopic scaling limit in the bulk of the Hermite $\beta$ corner process introduced by Gorin and Shkolnikov, generalizing 
 the process of the minors of the Gaussian Unitary and Orthogonal Ensembles.
  In order to prove our results, we use bounds on the variance of the point counting of the circular and the Gaussian beta ensembles, proven in a companion paper \cite{NV19}. 
  \end{abstract}
\section{Introduction}
In Boutillier \cite{Bou}, a remarkable family of point processes on $\mathbb Z \times \mathbb R$, called {\it bead processes}, and indexed by a parameter $\gamma \in (-1,1)$, has been defined. They enjoy the following
properties:
\begin{description}
\item[Interlacing.] The points of two consecutive lines
interlace with each other.
\item[Invariance.] The distribution of the point process is invariant and ergodic under the natural action of $\mathbb Z \times \mathbb R$ by translation.
\item[Parameters.] The expected number of points in any interval is proportional to its length. 
    Given that $(0,0)$ is in the process, the expected value of the first positive point on line $1$ is proportional to $\arccos \gamma$.
\item[Gibbs property.] The distribution of any point $X$, given the other points, is uniform on the interval
which is allowed by the interlacing property.
\end{description}
It is not known whether these properties determine 
the point process uniquely, as the closely related results of Sheffield \cite{She05} do not directly apply.

Existence was shown by Boutillier, who considers a deteminantal process with an explicit kernel. Its restriction to a line is the standard sine-kernel process. Thus the above description proposes to be the purest probabilistic definition of the Gaudin-Mehta sine kernel process limit of the bulk eigenvalues of the Gaussian Unitary Ensemble (GUE).

Boutillier's result relies on taking limits of tilings on the torus. Since then, works starting with Johansson and Nordenstam \cite{JN} showed that the consecutive minor eigenvalues of the Gaussian Unitary Ensemble also converge to the bead process, where the tilt depends on the global location within the Wigner semicircle. These results have been refined and generalized in Adler, Nordenstam and Moerbeke \cite{ANVM}. However, the
corresponding questions remained open for other
matrix ensembles, as the Gaussian Orthogonal Ensemble
(GOE), and the Gaussian Symplectic Ensemble (GSE): 
\begin{itemize}
\item Is there a limit of the eigenvalue minor process?
\item Is there a simple characterization as for $\beta=2$?
\item Can one derive formulas related to the distribution of beads?
\end{itemize}
One of the main goals of this paper is to answer positively to these questions.
The limiting process is defined as an infinite-dimensional Markov chain, the transition from one line to the next being explicitly described. This
transition can be viewed as a generalization of the limit, when the dimension
$n$ goes to infinity, of the random reflection walk on the unitary group $U(n)$. This walk is the unitary analogue of the random transposition walk studied, for
example, in Diaconis and Shahshahani \cite{DS81}, Berestycki and Durrett \cite{BD06} and Bormashenko \cite{Bor11}.

The natural generalization of the transpositions to the setting of the orthogonal group corresponds to the reflections.
The orthogonal matrix corresponding to the reflection across the plane with normal unit vector $v$ is $I-2vv^*$. To further generalize to the unitary group, we proceed as follows: given a fixed unit complex number $\eta$ and a unit vector $v$, we define
the {\bf complex reflection} across $v$ with angle $\arg(\eta)$ as the isometry whose matrix is
given by $I+(\eta -1)vv^*$. The random reflection walk $(Y_k)_{k \geq 1}$ on the unitary group $U(n)$ is then defined by $Y_k=X_1\dots X_k$, where $(X_j)_{j \geq 1}$ are independent reflections for which $v$ is chosen according to uniform measure on the complex
unit sphere, and $\eta$ is fixed.

Note that since the multiplicative increments of the walk are invariant under conjugation by any group element, it follows that $\bar Y_k$, the conjugacy class of $Y_k$, also follows a random walk. This, of course, is given by the eigenvalues of $Y_k$; the transition mechanism can be computed as follows. Assuming that the eigenvalues $u_j$ of $\bar Y_k$ are distinct, the eigenvalues of $Y_{k+1}$ are the solutions of
\begin{equation}
\sum_{j=0}^{n-1} i\frac{u_j+z}{u_j-z}\rho_j = i\frac{1+\eta}{1-\eta} \label{evolutionunitary}
\end{equation}
where for $|z|=1$ the summands and the right-hand side are both real. The only randomness is contained in the values $\rho_j$, which have a Dirichlet joint distribution with all parameters equal to 1. To summarize, in order to get the evolution of $(\bar Y_k)_{k \geq 1}$, we pick $(\rho_j)_{1 \leq j \leq n}$ from Dirichlet distribution, form the rational function given
by the left-hand side of \eqref{evolutionunitary}, and look at a particular level set to get the new eigenvalues.

This equation can be lifted to the real line. Let $(\lambda_j)_{j \in \mathbb{Z}}$ be the $(2 \pi n)$-periodic set of
$\lambda \in \mathbb{R}$ such that
$e^{i\lambda/n} \in \{u_1, \dots, u_n\}$, and extend the sequence $(\rho_j)_{1 \leq j \leq n}$ periodically (with period $n$) to all integer indices. With $z=e^{ix/n}$, the left-hand side of \eqref{evolutionunitary} can be written as
$$ \lim_{\ell \to\infty} \sum_{j=-\ell}^{\ell} \frac{2n\rho_j}{\lambda_j-x}.
$$
and the level set of this at $i(1+\eta)/(1-\eta)$ gives the lifting of the eigenvalues at the next step. Notice
now that essentially the only role of $n$ in the above process is given by the joint distribution
of the $\rho$-s. These are $n$-periodic and Dirichlet; clearly, as $n\to\infty$ they converge, after
suitable renormalization, to independent exponential variables, giving naturally an infinite-dimensional Markov chain.

In the present article, we prove rigorously the existence of this Markov chain, and we deduce a
new construction of the bead process. By replacing the exponential variables by gamma variables with general parameter, we construct a natural generalization of the
bead process, indexed by a parameter $\beta > 0$. For $\beta = 2$, this process is the bead process itself, and then it is  the scaling limit (at microscopic scale) of the eigenvalues of the GUE minors when the dimension goes to infinity. For $\beta= 1$, we show that we get the limit of the eigenvalues of the GOE minors, for $\beta = 4$, we get the limit of the eigenvalues 
of the GSE minors, 
and we generalize this result to all $\beta > 0$, by considering the Hermite $\beta$ corners, defined by Gorin and Shkolnikov \cite{GS}, which can be informally viewed as
the "eigenvalues of G$\beta$E minors". 

The sequel of the present paper is organized as follows.

In Section 2, we detail the above discussion
on the random reflection walk, and we deduce a property
of invariance for the law of the spectrum of a
Haar-distributed unitary matrix, for the
transition given by the equation \eqref{evolutionunitary}. We generalize this property
to circular beta ensembles for any $\beta > 0$.

In Section 3, we generalize the notion of Stieltjes transform
to a class of infinite point measures on the real line for which the series given by the usual definition is
not absolutely convergent.

In Section 4, we construct a family of Markov chains
on a space of point measures, for which the transition
mechanism is obtained by taking a level set of the Stieltjes
transform defined in Section 3.

In Section 5, we show how the lifting of the unit circle on the real line defined above connects the results of Section 2 to those of Section 4.

In  Section 6, we use some bound on the variance of the number of points of the circular beta ensembles in
an arc, in order to take the limit
 of the results in Section 5, when the period of the point measure goes
 to infinity. We show a property of invariance enjoyed by the determinantal sine-kernel process and
 its generalizations for all $\beta > 0$, for the Markov chain defined in Section 4. From this Markov chain,
 we deduce the construction of a stationnary point process on $\mathbb{R} \times \mathbb{Z}$, for which
 the points of a given line follow the distribution of the $\operatorname{Sine}_{\beta}$ process introduced in
 Valk\'o and Vir\'ag \cite{VV}.

 In Section 7, we show, under some technical conditions, a property of continuity of the Markov chain
 with respect to the initial point measure and the weights.

 From this result, and from a bound, proven in a companion paper \cite{NV19}
on the variance of the number of points of the Gaussian beta ensemble in intervals, we deduce in 
Section 8 that
 the generalized bead process constructed in Section 6 appears as a limit for the eigenvalues of the minors
 of Gaussian Ensembles for $\beta \in \{1,2,4\}$. The case $\beta = 2$ corresponds to the GUE, for which the
 convergence to the bead process defined by  Boutillier \cite{Bou} is already known from Adler, Nordenstam and Moerbeke \cite{ANVM}. Combining our result with \cite{ANVM} then implies that our Markov chain has necessarily the same distribution as the bead process given in \cite{Bou}.  The case
 $\beta = 1$ gives the convergence of the renormalized eigenvalues of the GOE minors, and the case $\beta = 4$ gives the convergence of the renormalized eigenvalues of the GSE minors.  For other values of $\beta$, we
 get a similar result of convergence for the renormalized points of the Hermite $\beta$ corner defined in \cite{GS}. 
\section{Random reflection chains on the unitary group}

We start with a brief review of how multiplication by complex
reflections changes eigenvalues. Let $U \in U(n)$ be a unitary matrix with distinct eigenvalues
$u_1, \dots, u_n$, and let $v$ be a unit vector. Let $a_1, \dots, a_n$ be the coefficients of $v$ in
a basis of unit eigenvectors of $U$, and let $\rho_j = |a_j|^2$ for $1 \leq j \leq n$: $\rho_1, \dots,
\rho_n$ do not depend on the choice of the eigenvector basis and the sum of these numbers is equal to $1$.

If $\eta\not= 1$ is a complex number of modulus 1, the complex reflection with angle $\arg \eta$ and vector
$v$ corresponds to the unitary matrix
$I+(\eta-1)vv^*$. If we multiply $U$ by this reflection, we get a new matrix whose eigenvalues $u$ satisfy
$$
0=\det(U(I+(\eta-1)vv^*)-u),
$$
which can be rewritten as
$$
0=\det(U-u)\det(I+(\eta-1)Uvv^*(U-u)^{^{-1}})
$$
when $u$ is not an eigenvalue of $U$. Now, the second argument is $I$ plus a rank-1 matrix, so its
determinant equals 1 plus the trace of the rank-1 matrix. Thus the equation above reduces to
$$
0=1+ (\eta-1){\rm tr}(Uvv^*(U-u)^{-1}) = 1 + (\eta-1)v^*((U-u)^{-1}U)v.
$$
Expanding $U$ in the basis of its eigenvectors and eigenvalues $u_j$, we get
$$
1=(1 - \eta) \sum_{j=1}^n \rho_j \frac{u_j}{u_j-u}
$$
or, after a transformation,
\begin{equation}
\sum_{j=1}^n i \rho_j \frac{u_j+u}{u_j-u}=i\frac{1+\eta}{1-\eta}. \label{equationinterlacing}
\end{equation}
As $u$ moves counterclockwise on the unit circle, and on each arc between two consecutive
poles, the left-hand side of \eqref{equationinterlacing} is continuous and strictly increasing from $-\infty$ to
$\infty$.
Hence, the matrix $U(I+(\eta-1)vv^*)$ has exactly one eigenvalue in each arc between
 eigenvalues of $U$: in other words, the eigenvalues of $U(I+(\eta-1)vv^*)$ strictly interlace between those of
 $U$, and are given by the solutions $u$ of the equation \eqref{equationinterlacing}.

Consider the product of the unit sphere in $\mathbb{C}^n$ and $\mathbb R$, and a distribution $\pi$ on this space
which is invariant under permutations of the $n$ coordinates of the sphere, and by multiplication of
each of these coordinates by complex numbers of modulus one.
For such a distribution, we can associate a Markov chain on unitary matrices as follows. Given
$U_0,\ldots, U_k$, we pick a sample $((a_1, \dots, a_n),h)$
from $\pi$ independently from the past. Then, $U_{k+1}$ is defined as the product of
$U_{k}$ by the reflection with parameter $\eta$ so that $h=i\frac{\eta+1}{\eta-1}$,
and vector $v=\sum a_j \varphi_j$, where $(\varphi_j)_{1 \leq j \leq n}$ are unit eigenvectors of $U$ (from
the assumption made on $\pi$, the law of
$v$ does not depend on the choice of the phases of the eigenvectors $(\varphi_j)_{1 \leq j \leq n}$).

From the discussion above, it is straightforward that if $V_k$ is the spectrum of
$U_k$, then $(V_k)_{k \geq 0}$ forms a Markov process as well; its distribution depends on
the coefficients $a_j$ only through $\rho_j$. The transition is given as follows: given $V_j$,
$(\rho_j)_{1 \leq j \leq n}$ and $h$, $V_{j+1}$ is formed by the $n$ solutions of
\eqref{equationinterlacing}.

When $a$ is uniform on the unit complex sphere of $\mathbb{C}^n$, and $h$ is independent of $a$, then
 $(\rho_j)_{1 \leq j \leq n}$ has Dirichlet$(1,\ldots,1)$ distribution, and the corresponding reflection
 is independent
of $U_k$. Thus the Markov chain reduces to a random walk:
$U_j=U_0R_1\ldots R_k$, where the reflections $(R_k)_{k \geq 1}$ are independent.

It is immediate that the Haar measure on $U(n)$ is invariant for this random walk. One deduces that if
$(\rho_j)_{1 \leq j \leq n}$ follows a Dirichlet distribution with all parameters equal to $1$, if
$h$ (and then $\eta$) is independent of $(\rho_j)_{1 \leq j \leq n}$, if the points of $V_0$ follow
the distribution of the eigenvalues of the CUE in dimension $n$, and if $(V_k)_{k \geq 0}$
is the Markov chain described above, then the law of $V_k$ does not depends of $k$: the CUE distribution
is invariant for this Markov chain.

This invariance property can be generalized to other distributions $\pi$.

Indeed, as in Simon \cite{Simon}, one can associate to
the point measure
$\sigma := \sum_{j=1}^n \rho_j \delta_{u_j}$
a so-called {\it Schur function}
 $f_{\sigma}$, which is rational, and which can be
 written, by Geronimus theorem, as
 $$f_{\sigma}(u) = R_{\alpha_0} \circ M_u \circ R_{\alpha_1} \circ M_u \circ R_{\alpha_2} \circ \cdots \circ R_{\alpha_{n-2}} \circ M_u (\alpha_{n-1}),$$
where $M_u$ denotes the multiplication by $u$, the $(\alpha_j)_{0 \leq j \leq n-1}$ are the Verblunsky coefficients associated to the orthogonal
polynomials with respect to the measure $\sigma$, and for all $\alpha \in \mathbb{D}$,
$R_{\alpha}$ is the M\"obius transformation given by
$$R_{\alpha} (z) = \frac{\alpha + z}{1 + \overline{\alpha} z}.$$
On the other hand, one has the equality of rational
functions:
 \begin{equation}
\int_{\mathbb{U}}
 i \frac{v+u}{v-u} d \sigma(v)
 =i\frac{1+u f_{\sigma}(u)}{1-u f_{\sigma} (u)}.
\end{equation}
  Hence, the equation \eqref{equationinterlacing}
  is satisfied if and only if
$u f_{\sigma}(u) = \eta$, or equivalently,
\begin{equation}
M_{\eta^{-1}}  \circ M_{u} \circ R_{\alpha_0} \circ M_u  \circ R_{\alpha_1} \circ \cdots \circ  M_u (\alpha_{n-1}) = 1. \label{MR}
\end{equation}
Now, $M_{\eta^{-1}}$ and $M_{u}$ commute and for $\alpha \in \mathbb{D}$, $M_{\eta^{-1}} \circ R_{\alpha} = R_{\alpha \eta^{-1}} \circ M_{\eta^{-1}}$.
One deduces that \eqref{MR} is equivalent to
$$M_u \circ R_{\alpha_0 \eta^{-1}} \circ M_u  \circ R_{\alpha_1 \eta^{-1}} \circ \cdots \circ  M_u (\alpha_{n-1} \eta^{-1}) = 1,$$
 i.e. $u f_{\tau} (u) = 1$, where $\tau$ is the finitely supported probability measure whose Verblunsky coefficients are
 $(\alpha_0 \eta^{-1}, \dots, \alpha_{n-1}\eta^{-1})$. Now, by the general construction of the Schur functions, the equation $u f_{\tau} (u) = 1$ is satisfied if and only if $u$ is a point of the support
 of $\tau$: in other words, this support is the set of solutions of \eqref{equationinterlacing}.
 We deduce that if the distribution $\pi$ and
 the law of $\{u_1, \dots, u_n\}$ are chosen in such
 a way that $(\alpha_0 \eta^{-1}, \dots, \alpha_{n-1}
 \eta^{-1})$ has the same law as
 $(\alpha_0, \dots, \alpha_{n-1})$, then
 the law of $\{u_1, \dots, u_n\}$ is invariant for
 the Markov chain described above. The precise statement is the following:
 \begin{proposition} \label{circularinvariance}
 Let $\pi$ be a probability
 distribution on the product of the unit sphere
 of $\mathbb{C}^n$ and $\mathbb{R}$, under which the
 first component $(a_1, \dots, a_n)$ is independent of the second $h = i (1 + \eta)/(1- \eta)$.
 We suppose that the law of
 $(a_1, \dots, a_n)$ is invariant by permutation of the coordinates, and by their pointwise multiplication by
  complex numbers of modulus $1$. Let
  $\mathbb{P}$ be a probability measure of the
  sets of $n$ points $\{u_1, \dots, u_n\}$,
  such that under the product measure $\mathbb{P}
  \otimes \pi$, the sequence $(\alpha_0, \dots,
  \alpha_{n-1})$ of Verblunsky coefficients associated
  to the measure $$\sigma = \sum_{1 \leq j \leq n}
  \rho_j \delta_{u_j} = \sum_{1 \leq j \leq n}
  |a_j|^2 \delta_{u_j}.$$
  has a law which is invariant by multiplication by
  complex numbers of modulus $1$.
  Then, the measure $\mathbb{P}$ is invariant
  for the Markov chain associated to $\pi$: more
  precisely, under $\mathbb{P}
  \otimes \pi$, the law of the set of solutions
  of \eqref{equationinterlacing} is equal to
  $\mathbb{P}$.
 \end{proposition}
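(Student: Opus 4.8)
\medskip
\noindent\textbf{Proof strategy.}\quad
The plan is to reduce the statement to the behaviour of Verblunsky coefficients under rotation, using the identification of the solution set of \eqref{equationinterlacing} obtained in the discussion preceding the statement. Recall that, by the finite case of Geronimus's theorem, a probability measure on $\mathbb{U}$ supported on exactly $n$ points corresponds bijectively to its sequence of Verblunsky coefficients $(\alpha_0, \dots, \alpha_{n-1}) \in \mathbb{D}^{n-1} \times \mathbb{U}$, and the map $S$ sending such a sequence to the support of the corresponding measure is measurable. Under $\mathbb{P} \otimes \pi$ — where we may and do assume that the $\rho_j$ are almost surely positive, so that $\sigma$ is indeed supported on exactly $n$ points — one has $S(\alpha_0, \dots, \alpha_{n-1}) = \{u_1, \dots, u_n\}$, which has law $\mathbb{P}$; and the computation above shows that the set of solutions of \eqref{equationinterlacing} is exactly $S(\alpha_0 \eta^{-1}, \dots, \alpha_{n-1}\eta^{-1})$, the support of the measure $\tau$ whose Verblunsky coefficients are the rotated sequence. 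Hence it suffices to prove that, under $\mathbb{P} \otimes \pi$, the sequence $(\alpha_0 \eta^{-1}, \dots, \alpha_{n-1}\eta^{-1})$ has the same law as $(\alpha_0, \dots, \alpha_{n-1})$: applying the measurable map $S$ then shows that the solution set of \eqref{equationinterlacing} has law $\mathbb{P}$, which is the claim.

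To prove this equality in law, I would first observe that $(\alpha_0, \dots, \alpha_{n-1})$, being the Verblunsky sequence of $\sigma = \sum_j |a_j|^2 \delta_{u_j}$, is a function of $\{u_1, \dots, u_n\}$ and of $(|a_1|^2, \dots, |a_n|^2)$ alone, hence a function of the $\mathbb{P}$-sample and of the first component $(a_1, \dots, a_n)$ of the $\pi$-sample only. Since by hypothesis this first component is independent of the second component $h$, and therefore of $\eta$, the random sequence $(\alpha_0, \dots, \alpha_{n-1})$ is independent of $\eta$. Writing $\mu$ for the law of $(\alpha_0, \dots, \alpha_{n-1})$ and $\nu$ for the law of $\eta$, we then get, for every bounded measurable $F$ on $\mathbb{D}^{n-1} \times \mathbb{U}$, by Fubini and the assumed invariance of $\mu$ under simultaneous multiplication of all coordinates by a fixed element of $\mathbb{U}$,
\begin{align*}
\mathbb{E}\big[ F(\alpha_0 \eta^{-1}, \dots, \alpha_{n-1}\eta^{-1}) \big]
&= \int_{\mathbb{U}} \Big( \int F(z_0 \eta^{-1}, \dots, z_{n-1}\eta^{-1}) \, d\mu(z_0, \dots, z_{n-1}) \Big) \, d\nu(\eta) \\
&= \int_{\mathbb{U}} \Big( \int F \, d\mu \Big) \, d\nu(\eta)
= \mathbb{E}\big[ F(\alpha_0, \dots, \alpha_{n-1}) \big],
\end{align*}
which is precisely the desired equality in distribution. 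Note that $(\alpha_0 \eta^{-1}, \dots, \alpha_{n-1}\eta^{-1})$ is again an admissible Verblunsky sequence, since $|\eta^{-1}| = 1$ keeps the first $n-1$ entries in $\mathbb{D}$ and the last one in $\mathbb{U}$, so that $S$ may legitimately be applied to it.

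This argument is short because the real work has already been carried out above: the identity $u f_\tau(u) = 1 \Leftrightarrow u \in \operatorname{supp}(\tau)$, together with the reformulation of \eqref{equationinterlacing} as ``replace each $\alpha_j$ by $\alpha_j\eta^{-1}$'', is exactly what makes the rotation-invariance hypothesis directly applicable. The one place where I would be careful is the measure-theoretic bookkeeping: checking that the Geronimus correspondence and the support map $S$ are jointly measurable in all the relevant variables (so that the change of variables above is legitimate and $S$ transports one distribution to the other), and that the $\sigma$-field generated by $(\alpha_0, \dots, \alpha_{n-1})$ really is contained in the one generated by $\big(\{u_1, \dots, u_n\}, (|a_j|^2)_{1 \le j \le n}\big)$, with no hidden dependence on $h$. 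Granting these routine facts, the proof is complete.
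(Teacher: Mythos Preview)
Your proof is correct and follows exactly the route the paper takes: the discussion preceding the proposition already identifies the solution set of \eqref{equationinterlacing} with the support of the measure $\tau$ whose Verblunsky coefficients are $(\alpha_0\eta^{-1},\dots,\alpha_{n-1}\eta^{-1})$, and you then invoke the rotation-invariance hypothesis together with the independence of $(\alpha_0,\dots,\alpha_{n-1})$ from $\eta$ to conclude. The only difference is that you spell out the independence argument via Fubini and flag the measurability of the Geronimus correspondence, details the paper leaves implicit.
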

 It is not obvious to find explicitly some measures $\mathbb{P}$ and $\pi$ under which the law of the Verblunsky coefficients
is invariant by rotation. An important example is obtained by considering the so-called {\it circular beta ensembles}.
These ensembles are constructed as follows: for some parameter $\beta > 0$, one defines a probability measure
$\mathbb{P}_{n,\beta}$ on the sets of $n$ points on the unit circle, such that the corresponding $n$-point correlation function $r_{n,\beta}$ is given,
for $z_1, \dots z_n \in \mathbb{U}$, by
$$r_{n, \beta} (z_1, \dots, z_N) = C_{n, \beta} \, \prod_{1 \leq j < k \leq N} |z_j - z_k|^{\beta},$$
where $C_{n, \beta} > 0$ is a normalization constant. Note that, for $\beta = 2$, one obtains the distribution of the spectrum of a random $n \times n$ unitary matrix following the
Haar measure.
Now, let $\pi_{n, \beta}$ be
any distribution on the product of the unit sphere of $\mathbb{C}^n$ and $\mathbb{R}$,
such that with the notation above, $h$ is
independent of $(\rho_0, \dots, \rho_{n-1})$, which has a Dirichlet distribution with all parameters equal to $\beta/2$.
Then, under  $\mathbb{P}_{n, \beta}
  \otimes \pi_{n, \beta}$, the distribution of
the Verblunsky coefficients $(\alpha_0, \alpha_1, \dots, \alpha_{n-1})$ has been computed in Killip and Nenciu \cite{bib:KN04}. One obtains the following:
\begin{itemize}
\item The coefficients $\alpha_0, \alpha_1, \dots \alpha_{n-1}$ are independent random variables.
\item The coefficient $\alpha_{n-1}$ is uniform on the unit circle.
\item For $j \in \{0, 1, \dots, n-2\}$, the law of $\alpha_j$ has density $(\beta/2)(n-j-1) (1-|\alpha_j|^2)^{(\beta/2)(n-j-1) - 1}$
with respect to the uniform probability measure on the unit disc: note that
$|\alpha_j|^2$ is then a beta variable of parameters
$1$ and $\beta(n-j-1)/2$.
\end{itemize}
Therefore, the law of $(\alpha_0, \alpha_1, \dots, \alpha_{n-1})$ is invariant by rotation, and one deduces the following result:
\begin{proposition} \label{CJEinvariance}
The law of the circular beta ensemble is an
invariant measure for the Markov chain associated
to $\pi_{n, \beta}$. More precisely, under
$\mathbb{P}_{n, \beta} \otimes \pi_{n, \beta}$,
the set of solutions of \eqref{equationinterlacing}
follows the distribution $\mathbb{P}_{n, \beta}$.
\end{proposition}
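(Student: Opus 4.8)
The plan is to derive Proposition~\ref{CJEinvariance} as an instance of the general criterion Proposition~\ref{circularinvariance}, so that the whole task reduces to verifying that, under $\mathbb{P}_{n,\beta}\otimes\pi_{n,\beta}$, the Verblunsky coefficients $(\alpha_0,\dots,\alpha_{n-1})$ of the measure $\sigma=\sum_{j=1}^n|a_j|^2\delta_{u_j}$ have a law invariant under multiplication by a unimodular constant.

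First I would fix a convenient representative of $\pi_{n,\beta}$: since, as observed above, the Markov chain on point sets depends on the sphere component only through $(\rho_1,\dots,\rho_n)=(|a_1|^2,\dots,|a_n|^2)$, one may take $a_j=\sqrt{\rho_j}\,e^{i\theta_j}$ with $(\theta_j)_{1\le j\le n}$ i.i.d.\ uniform on $[0,2\pi)$, independent of $(\rho_1,\dots,\rho_n)\sim\mathrm{Dirichlet}(\beta/2,\dots,\beta/2)$, and $h$ independent of both. This choice manifestly satisfies the structural hypotheses of Proposition~\ref{circularinvariance} (invariance of $(a_1,\dots,a_n)$ under permutation of the coordinates and under their pointwise multiplication by complex numbers of modulus $1$, and independence of $h$). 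Moreover, because $\mathbb{P}_{n,\beta}\otimes\pi_{n,\beta}$ is a product measure, the point set $\{u_1,\dots,u_n\}$ is independent of $(a_1,\dots,a_n)$ and of $h$; since $(\alpha_0,\dots,\alpha_{n-1})$ is a deterministic function of $\sigma$, hence of $\{u_j\}$ and $(\rho_j)$ only, it follows that $\eta$ is independent of $(\alpha_0,\dots,\alpha_{n-1})$.

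The key input is the Killip--Nenciu computation \cite{bib:KN04}: under $\mathbb{P}_{n,\beta}\otimes\pi_{n,\beta}$ the coefficients $\alpha_0,\dots,\alpha_{n-1}$ are independent, $\alpha_{n-1}$ is uniform on the unit circle, and for $0\le j\le n-2$ the law of $\alpha_j$ is absolutely continuous on the unit disc with density proportional to $(1-|\alpha_j|^2)^{(\beta/2)(n-j-1)-1}$. Each of these laws depends on $\alpha_j$ only through $|\alpha_j|$, hence is invariant under the rotation $\alpha_j\mapsto\eta^{-1}\alpha_j$; combined with the independence of the $\alpha_j$'s and of $\eta$, this shows that $(\eta^{-1}\alpha_0,\dots,\eta^{-1}\alpha_{n-1})$ has the same distribution as $(\alpha_0,\dots,\alpha_{n-1})$. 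Proposition~\ref{circularinvariance} then applies and yields exactly the assertion: $\mathbb{P}_{n,\beta}$ is invariant for the Markov chain associated to $\pi_{n,\beta}$, and under $\mathbb{P}_{n,\beta}\otimes\pi_{n,\beta}$ the set of solutions of~\eqref{equationinterlacing} is distributed according to $\mathbb{P}_{n,\beta}$.

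I do not expect a substantial obstacle here: the hard work is already packaged in Proposition~\ref{circularinvariance} (which itself rests on the Geronimus theorem and the Schur-function computation) and in the Killip--Nenciu theorem. The only point requiring genuine care is the consistency of normalizations --- one should check that the circular beta ensemble with $n$-point correlation function proportional to $\prod_{j<k}|z_j-z_k|^{\beta}$, paired with weights $\rho\sim\mathrm{Dirichlet}(\beta/2,\dots,\beta/2)$, is precisely the setting in which \cite{bib:KN04} establishes the independence and the explicit densities recalled above, including the fact that $\alpha_{n-1}$ is supported on the circle rather than on the open disc. In every case the relevant marginal laws are rotation invariant, so the argument closes; this matching of conventions is the one step I would spell out in detail.
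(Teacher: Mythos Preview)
Your proposal is correct and follows essentially the same approach as the paper: invoke the Killip--Nenciu description of the Verblunsky coefficients under $\mathbb{P}_{n,\beta}\otimes\pi_{n,\beta}$, observe that each coefficient has a rotation-invariant law, and apply Proposition~\ref{circularinvariance}. The paper's argument is in fact the paragraph immediately preceding the statement, and your write-up is a slightly more detailed version of it (in particular, your explicit check that $\pi_{n,\beta}$ satisfies the structural hypotheses of Proposition~\ref{circularinvariance} is a welcome addition).
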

In the next sections, we will take a limit when $n$
goes to infinity. For this purpose, we need to
consider point processes on the real line instead
of the unit circle, and to find an equivalent of
the equation \eqref{equationinterlacing} in this
setting.
\section{Stieltjes transform for point measures}
Let $\Lambda$ be a $\sigma$-finite point measure on $\mathbb{R}$, which can be written as follows:
$$\Lambda = \sum_{\lambda \in L} \gamma_{\lambda} \delta_{\lambda},$$
where $L$ is a discrete subset of the real line, $\gamma_{\lambda} > 0$ for all $\lambda \in S$, and
$\delta_{\lambda}$ is the Dirac measure at $\lambda$.
The usual definition of the Stieltjes transform applied to $\Lambda$ gives,
for $z \in \mathbb{C} \backslash \{L\}$:
\begin{equation}
S_{\Lambda} (z) = \sum_{\lambda \in L} \frac{\gamma_{\lambda}}{\lambda - z}. \label{stieltjes}
\end{equation}
If the set $L$ is finite, then $S_{\Lambda}(z)$ is well-defined as a rational function. If $L$ is infinite and
if the right-hand side of \eqref{stieltjes} is absolutely convergent, then this equation is still meaningful.
The following result implies that under some technical assumptions, one can define $S_{\Lambda}$ even if \eqref{stieltjes} does not apply directly:
\begin{theorem} \label{theoremstieltjes}
Assume that the for all $a, b \in \mathbb{R}$, $\Lambda[0,x+a] - \Lambda[-x+b,0] = O(x/\log^2 x)$ as $x\to \infty$. Then, for
all $z \in \mathbb{C} \backslash \{L\}$, there exists
$S_{\Lambda} (z) \in \mathbb{C}$ such that
$$ \sum_{\lambda \in L \cap [-c, c]} \frac{\gamma_{\lambda}}{\lambda - z} \underset{c \rightarrow \infty}{\longrightarrow} S_{\Lambda}(z).$$
The function $S_{\Lambda}$ defined in this way is meromorphic, with simple poles at the elements of $L$, and the residue at $\lambda \in L$ is equal to
$- \gamma_{\lambda}$. The derivative of $S_{\Lambda}$ is given by
\begin{equation}
S'_{\Lambda}(z) =  \sum_{\lambda \in L} \frac{\gamma_{\lambda}}{(\lambda - z)^2}, \label{derivative}
\end{equation}
where the convergence of the series is uniform on compact sets of $\mathbb{C} \backslash \{L\}$. For all pairs $\{\lambda_1, \lambda_2\}$ of consecutive
points in $L$, with $\lambda_1 < \lambda_2$, the function $S_{\Lambda}$ is a strictly
increasing bijection from $(\lambda_1, \lambda_2)$ to $\mathbb{R}$. Moreover, we have the following translation invariance: if $y\in \mathbb R$ and $\Lambda$ satisfies the conditions above, then so does its translation  $\Lambda+y$, and one has
$$S_{\Lambda + y}(z + y) = S_{\Lambda} (z)$$
for all $z \in \mathbb{C} \backslash \{L\}$.
\end{theorem}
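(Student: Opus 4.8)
The plan is to define $S_\Lambda(z)$ by a symmetric truncation after first subtracting a slowly varying counterterm, then push all the regularity through Vitali-type arguments. The key observation is that the hypothesis $\Lambda[0,x+a]-\Lambda[-x+b,0]=O(x/\log^2 x)$ controls the difference between positive and negative contributions to \eqref{stieltjes}. Concretely, I would write, for fixed $z$ with $\operatorname{Im} z\neq 0$ (treating the real case afterward by analytic continuation along with pole subtraction),
$$\sum_{\lambda\in L\cap[-c,c]}\frac{\gamma_\lambda}{\lambda-z}=\int_{[-c,c]}\frac{d\Lambda(t)}{t-z},$$
and integrate by parts, using the counting function $N(t):=\Lambda[0,t]$ for $t\ge 0$ and $-\Lambda[t,0)$ for $t<0$, so that $dN=d\Lambda$. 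The boundary terms at $\pm c$ are $\frac{N(c)}{c-z}+\frac{N(-c)}{-c-z}$; writing $N(-c)=-(N(c)+E(c))$ with $E(c)=O(c/\log^2 c)$ by hypothesis (with $a=b=0$), these combine to $\frac{N(c)(2z)}{c^2-z^2}+\frac{E(c)}{c+z}\to 0$ as $c\to\infty$, since $N(c)=O(c)$ and $E(c)/c\to 0$. The remaining term $\int_{-c}^{c}\frac{N(t)}{(t-z)^2}\,dt$ converges absolutely as $c\to\infty$ because $N(t)=O(|t|)$ and $\int |t|/|t-z|^2\,dt$ diverges only logarithmically — here I would again split $N(t)=N_{\mathrm{sym}}(t)+\tfrac12 E(|t|)\operatorname{sgn}(t)$ where $N_{\mathrm{sym}}(t):=\tfrac12(N(t)-N(-t))$ is odd so that $\int \frac{N_{\mathrm{sym}}(t)}{(t-z)^2}dt$ converges (the odd part kills the logarithmic divergence against the even leading behavior of $(t-z)^{-2}$), and $\int\frac{E(|t|)}{(t-z)^2}dt$ converges absolutely since $E(|t|)=O(|t|/\log^2|t|)$. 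This both establishes existence of the limit $S_\Lambda(z)$ and simultaneously proves \eqref{derivative}: differentiating under the (now absolutely and locally uniformly convergent) integral, or directly noting $\sum_{\lambda\in L}\gamma_\lambda/(\lambda-z)^2$ converges absolutely and locally uniformly by the same $N(t)=O(|t|)$ estimate via a second integration by parts.

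Granting this, meromorphicity, the location of poles, and the residues follow from standard facts: on any compact set avoiding $L$ the tail $\sum_{|\lambda|>c}$ converges uniformly to a holomorphic function, and each $\lambda_0\in L$ contributes the single explicit term $\gamma_{\lambda_0}/(\lambda_0-z)$ with residue $-\gamma_{\lambda_0}$, the rest being holomorphic near $\lambda_0$. For the monotonicity statement on an interval $(\lambda_1,\lambda_2)$ between consecutive points of $L$: for real $x\in(\lambda_1,\lambda_2)$ one has $S'_\Lambda(x)=\sum_\lambda \gamma_\lambda/(\lambda-x)^2>0$ by \eqref{derivative} (all terms strictly positive, series convergent), so $S_\Lambda$ is strictly increasing there; that it is a bijection onto $\mathbb R$ follows because as $x\downarrow\lambda_1$ the term $\gamma_{\lambda_1}/(\lambda_1-x)\to-\infty$ while the remainder stays bounded, giving $S_\Lambda(x)\to-\infty$, and symmetrically $S_\Lambda(x)\to+\infty$ as $x\uparrow\lambda_2$. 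Finally, translation invariance is essentially formal: if $\Lambda$ satisfies the hypothesis then $(\Lambda+y)[0,x+a]-(\Lambda+y)[-x+b,0]=\Lambda[-y,x+a-y]-\Lambda[-x+b-y,-y]$, which after splitting at $0$ differs from $\Lambda[0,x+(a-y)]-\Lambda[-x+(b-y),0]$ by the bounded quantity $\pm\Lambda[-y,0]$ or $\pm\Lambda[0,-y]$, hence is still $O(x/\log^2 x)$; and in the truncated sums $\sum_{\lambda\in L\cap[-c,c]}\gamma_\lambda/(\lambda-z)$ the change of variables $\lambda\mapsto\lambda+y$, $z\mapsto z+y$ changes the truncation window from $[-c,c]$ to $[-c+y,c+y]$, and the difference between these two windows is a sum over $O(1)$-length end intervals whose contribution vanishes as $c\to\infty$ by the same boundary-term estimate as above, yielding $S_{\Lambda+y}(z+y)=S_\Lambda(z)$.

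The main obstacle I anticipate is the bookkeeping in the integration-by-parts step — specifically, being careful that $N(t)=O(|t|)$ genuinely follows from the hypothesis (take $a=b=0$ to get $N(x)+N(-x)=O(x/\log^2 x)$, but one still needs a one-sided linear bound, which comes from, e.g., applying the hypothesis with varying $a$: $\Lambda[0,x+a]-\Lambda[-x,0]=\Lambda[0,x]-\Lambda[-x,0]+\Lambda[x,x+a]$, so $\Lambda[x,x+a]=O(x/\log^2 x)$ uniformly, hence telescoping over unit increments $\Lambda[0,x]=\sum_{k<x}\Lambda[k,k+1]$ — but this only gives $O(x/\log^2 x)\cdot x$, too weak; instead one should note that $\Lambda[x,x+1]$ need not be bounded pointwise, so the cleanest route is to observe that the claim $S_\Lambda$ exists is really about the \emph{difference} of the positive and negative tails, so one never needs $N(t)=O(|t|)$ separately — only $N_{\mathrm{sym}}(t)=O(t/\log^2 t)$ and the convergence of $\int N_{\mathrm{sym}}(t)(t-z)^{-2}dt$, which this directly gives). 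Reorganizing the argument so that it rests solely on the hypothesized cancellation, rather than on any separate linear growth bound, is the delicate point; once that is set up correctly, everything else is routine.
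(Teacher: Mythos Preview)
Your strategy—integration by parts against the cumulative mass $N(t)$, then splitting into a symmetrized piece and a small remainder—is essentially the same as the paper's, which writes $\frac{1}{\lambda-z}=\int_\lambda^\infty\frac{d\mu}{(\mu-z)^2}$ and applies Fubini; both routes land on integrals of $\Lambda([c_0,\mu])$ and $\Lambda([-\mu,-c_0])$ against $(\mu\pm z)^{-2}$. The remaining parts of your sketch (meromorphicity, residues, the derivative formula, monotonicity and surjectivity on each gap, translation invariance) are handled correctly and match the paper.

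The genuine gap is in your treatment of the growth of $N$. You assert $N(c)=O(c)$ for the boundary term, and later that the odd part $N_{\mathrm{sym}}$ needs no quantitative bound because ``the odd part kills the logarithmic divergence.'' Both are wrong. The hypothesis only controls the \emph{even} part of $N$ (i.e.\ $N(t)+N(-t)=\Lambda[0,t]-\Lambda[-t,0)=O(t/\log^2 t)$); it says nothing directly about $N(t)$ alone or about the odd part $N_{\mathrm{sym}}(t)=\tfrac12(N(t)-N(-t))\approx\tfrac12\Lambda[-t,t]$. Your decomposition $N=N_{\mathrm{sym}}+\tfrac12 E(|t|)\operatorname{sgn}(t)$ is also garbled: both summands as written are odd, whereas the correct remainder $\tfrac12(N(t)+N(-t))$ is even. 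After folding, the odd part contributes $\int_0^c N_{\mathrm{sym}}(t)\cdot\frac{4tz}{(t^2-z^2)^2}\,dt$, whose integrand is $\sim 4z\,N_{\mathrm{sym}}(t)/t^3$; oddness buys you the $t^{-3}$ decay of the kernel but you still need $N_{\mathrm{sym}}(t)/t^3$ integrable, hence a bound on $\Lambda[-t,t]$.

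The irony is that you actually derive exactly the bound you need and then discard it. From the hypothesis with $a=1,b=0$ and $a=0,b=-1$ one gets $\Lambda((x,x+1])+\Lambda([-x-1,-x))=O(x/\log^2 x)$; telescoping gives $\Lambda[-x,x]=O(x^2/\log^2 x)$, hence $N(t)=O(t^2/\log^2 t)$. You call this ``too weak'' because you were aiming for $O(t)$, but $O(t^2/\log^2 t)$ is precisely sufficient: it makes the boundary term $N(c)\cdot 2z/(c^2-z^2)=O(1/\log^2 c)\to 0$, and it makes $N_{\mathrm{sym}}(t)/t^3=O(1/(t\log^2 t))$ integrable. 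This is exactly the bound the paper proves and uses (their $\Lambda([-x,x])\le K(1+x)F(x)$ with $F(x)\sim x/\log^2 x$). So the fix is not to reorganize around avoiding a one-sided bound, but simply to keep the $O(x^2/\log^2 x)$ estimate you already have and feed it into both the boundary term and the odd-part integral.
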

\begin{remark}
The bound $x/\log^2 x$ is somohow arbitrary and not optimal (any increasing function which is negligible with respect to $x$ and integrable  against $dx/x^2$ at infinity would work). However, it will be sufficient for our purpose.
\end{remark}
\begin{proof}
Let $c_0 > 1$, and $z \in \mathbb{C}$ such that $|z| \leq c_0/2$. For $c > c_0$, we have:
\begin{align*}
\sum_{\lambda \in L \cap ([-c,-c_0] \cup [c_0,c] )} \frac{\gamma_{\lambda}}{\lambda - z }
& = \sum_{\lambda \in L \cap [c_0,c]} \gamma_{\lambda} \int_{\lambda}^{\infty} \frac{d \mu}{(\mu -z)^2}
-  \sum_{\lambda \in L \cap [-c,-c_0]} \gamma_{\lambda} \int_{-\infty}^{\lambda} \frac{d \mu}{(\mu -z)^2}
\\ & = \int_{c_0}^{\infty} \frac{ \Lambda([c_0, c \wedge \mu])}{(\mu -z)^2} \, d\mu
-  \int_{-\infty}^{-c_0} \frac{ \Lambda([(-c) \vee \mu, - c_0])}{(\mu -z)^2} \, d\mu
\\ & =  \int_{c_0}^{\infty} \left( \frac{ \Lambda([c_0, c \wedge \mu])}{(\mu -z)^2} -  \frac{ \Lambda([-(c \wedge \mu), - c_0])}{(\mu +z)^2} \right)\, d \mu
\\ & = \int_{c_0}^{\infty} \frac{ \Lambda([c_0, c \wedge \mu]) - \Lambda([-(c \wedge \mu), - c_0])}{\mu^2} \, d \mu
\end{align*}
$$ +  \int_{c_0}^{\infty} \left( \frac{ (2 z \mu - z^2) (\Lambda([c_0, c \wedge \mu]) )}{\mu^2 (\mu-z)^2}
+ \frac{(2 z \mu + z^2) (\Lambda([-(c \wedge \mu), -c_0]) )}{\mu^2 (\mu+z)^2}
 \right) \, d\mu.$$
 Let $F$ be an increasing function from $\mathbb{R}_+$ to $\mathbb{R}_+^*$, such that $F(x)$ is equivalent to $x/\log^2 x$ when $x$ goes to infinity.
  By assumption, there exists $C > 0$ such that for all $x \geq 0$,
$|\Lambda([0, x]) - \Lambda([-x, 0])| \leq C F(x) $, and
then, for all $\mu \geq c_0$,
$$ |\Lambda([c_0, c \wedge \mu]) - \Lambda([-(c \wedge \mu), - c_0])| \leq C F(c \wedge \mu) + \Lambda([-c_0, c_0])
\leq \left( C + \frac{ \Lambda([-c_0, c_0]) }{F(0)} \right) \, F(\mu).$$  Since $\mu \mapsto F(\mu)/\mu^2$ is integrable at infinity,
one obtains, by dominated convergence,
$$ \int_{c_0}^{\infty} \frac{ \Lambda([c_0, c \wedge \mu]) - \Lambda([-(c \wedge \mu), - c_0])}{\mu^2} \, d \mu
\underset{c \rightarrow \infty}{\longrightarrow}  \int_{c_0}^{\infty} \frac{ \Lambda([c_0,  \mu]) - \Lambda([-\mu, - c_0])}{\mu^2} \, d \mu,$$
where the limiting integral is absolutely convergent. Similarly, there exist $C', C'' > 0$ such that for all
$x \geq 0$, $|\Lambda([0,x+1]) - \Lambda([-x,0])| \leq C' F(x)$ and $|\Lambda([0,x]) - \Lambda([-x-1,0])| \leq C'' F(x)$,  which implies that
\begin{align*}
\Lambda((x,x+1]) + \Lambda([-x-1,-x))  & \leq |\Lambda([0,x+1]) - \Lambda([-x,0])| + |\Lambda([0,x]) - \Lambda([-x-1,0])|  \\ &  \leq (C'+C'') F(x).
\end{align*}
Hence, for all integers $n \geq 1$,
\begin{align*}
\Lambda([-n,n]) & = \Lambda(\{0\}) + \sum_{k=0}^{n-1} (\Lambda((k,k+1]) + \Lambda([-k-1,-k)) \\ & \leq  \Lambda(\{0\}) + (C' + C'') \, \sum_{k=0}^{n-1} F(k)
\leq K n F(n-1)
\end{align*}
where $K > 0$ is a constant, and then for all $x \geq 0$, $\Lambda([-x,x]) \leq K (1+x) F(x)$, which implies that for $\mu \geq c_0$,
$ \Lambda([-(c \wedge \mu), -c_0]) \leq K (1+ \mu) F(\mu)$ and $\Lambda([c_0, c \wedge \mu]) \leq K(1+ \mu) F(\mu)$.
Moreover, since $|z| \leq c_0/2 \leq \mu/2$, one has $|\mu - z| \geq \mu/2$,  $|\mu + z| \geq \mu/2$  and
\begin{equation}
\left| \frac{ (2 z \mu - z^2)}{\mu^2 (\mu-z)^2} \right| + \left| \frac{ (2 z \mu +  z^2)}{\mu^2 (\mu+z)^2} \right|
\leq 2 \, \frac{  2.5 |z| \mu}{ \mu^2 (\mu/2)^2} = 20 |z| / \mu^3 \leq 10 \, c_0/\mu^3 \label{20z}
\end{equation}
Since $\mu \mapsto (1+ \mu) F(\mu) / \mu^3$ is integrable at infinity, one can again apply dominated convergence and obtain that
$$\int_{c_0}^{\infty} \left( \frac{ (2 z \mu - z^2) (\Lambda([c_0, c \wedge \mu]) )}{\mu^2 (\mu-z)^2}
+ \frac{(2 z \mu + z^2) (\Lambda([-(c \wedge \mu), -c_0]) )}{\mu^2 (\mu+z)^2}
 \right) \, d\mu$$
 tends to $$\int_{c_0}^{\infty} \left( \frac{ (2 z \mu - z^2) (\Lambda([c_0, \mu]) )}{\mu^2 (\mu-z)^2}
+ \frac{(2 z \mu + z^2) (\Lambda([-\mu, -c_0]) )}{\mu^2 (\mu+z)^2}
 \right) \, d\mu$$ when $c$ goes to infinity. Therefore,
 \begin{align*}
 \sum_{\lambda \in L \cap ([-c,-c_0] \cup [c_0,c] )} \frac{\gamma_{\lambda}}{\lambda - z }
& \underset{c \rightarrow \infty}{\longrightarrow}
  \int_{c_0}^{\infty} \frac{ \Lambda([c_0,  \mu]) - \Lambda([-\mu, - c_0])}{\mu^2} \, d \mu
  \\ & + \int_{c_0}^{\infty} \left( \frac{ (2 z \mu - z^2) (\Lambda([c_0, \mu]) )}{\mu^2 (\mu-z)^2}
+ \frac{(2 z \mu + z^2) (\Lambda([-\mu, -c_0]) )}{\mu^2 (\mu+z)^2}
 \right) \, d\mu,
\end{align*}
 which proves the existence of the limit defining $S_{\Lambda} (z)$: explicitly, for $z \in \mathbb{C} \backslash \{L\}$ and for any $c_0 > 2|z| \vee 1$,
 \begin{align}
 S_{\Lambda}(z) & = \sum_{\lambda \in L \cap (-c_0,c_0)}  \frac{\gamma_{\lambda}}{\lambda - z }
+ \int_{c_0}^{\infty} \frac{ \Lambda([c_0,  \mu]) - \Lambda([-\mu, - c_0])}{\mu^2} \, d \mu \nonumber
\\ & +  \int_{c_0}^{\infty} \left( \frac{ (2 z \mu - z^2) (\Lambda([c_0, \mu]) )}{\mu^2 (\mu-z)^2}
+ \frac{(2 z \mu + z^2) (\Lambda([-\mu, -c_0]) )}{\mu^2 (\mu+z)^2}
 \right) \, d\mu. \label{int}
 \end{align}
 For fixed $c_0 > 0$, the first term of \eqref{int} is a rational function of $z$, the second term of \eqref{int} does not depend on $z$, and by dominated convergence,
 the third term can be differentiated in the integral if we restrict $z$ to the
 set $\{|z| < c_0/2\}$. Hence, the restriction of $S_{\Lambda}$ to the set $\{z  < c_0/2\}$ is meromorphic, with simple poles at points $\lambda \in L \cap (-c_0/2, c_0/2)$.
 Since $c_0$ can be taken arbitrarily large, $S_{\Lambda}$ is in fact meromorphic on
 $\mathbb{C}$, with poles $\lambda \in L$, the pole $\lambda$ having residue $- \gamma_{\lambda}$.
 The derivative $S'_{\Lambda}(z)$ is given, for any $c_0 > 2 |z| \vee 1$, by:
 \begin{align}
 S'_{\Lambda} (z) &  =  \sum_{\lambda \in L \cap (-c_0,c_0)}  \frac{\gamma_{\lambda}}{(\lambda - z)^2 }
+ 2 \,  \int_{c_0}^{\infty} \left( \frac{ (\Lambda([c_0, \mu]) )}{ (\mu-z)^3}
+ \frac{ (\Lambda([-\mu, -c_0]) )}{ (\mu+z)^3}
 \right) \, d\mu. \nonumber \\ &
 =  \sum_{\lambda \in L \cap (-c_0,c_0)}  \frac{\gamma_{\lambda}}{(\lambda - z)^2 }  +
   \int_{c_0}^{\infty}   \, \left( \sum_{\lambda \in L \cap [c_0, \mu]} \gamma_{\lambda} \right) \,  \frac{2 \, d \mu}{(\mu-z)^3}
 +   \int_{c_0}^{\infty}   \, \left( \sum_{\lambda \in L \cap [-\mu, -c_0]} \gamma_{\lambda} \right) \,  \frac{2 \, d \mu}{(\mu+z)^3}
 \nonumber \\
 &  =  \sum_{\lambda \in L \cap (-c_0,c_0)}  \frac{\gamma_{\lambda}}{(\lambda - z)^2 }   +
 \sum_{\lambda \in L \cap [c_0, \infty)}  \gamma_{\lambda}  \int_{\lambda}^{\infty} \, \frac{2 \, d \mu}{(\mu-z)^3}
 + \sum_{\lambda \in L \cap (-\infty, c_0]} \gamma_{\lambda} \int_{-\lambda}^{\infty}  \, \frac{2 \, d \mu}{(\mu+z)^3},
 \nonumber
 \end{align}
 which implies \eqref{derivative}. Note that the implicit use of Fubini theorem is this computation is correct since all the sums and integral involved are
 absolutely convergent.

 Now, let $\mathcal{K}$ be a compact set of $\mathbb{C} \backslash L$, let $d > 0$ be the distance between $\mathcal{K}$ and $L$, and let
 $A > 0$ be the maximal modulus of the elements of $\mathcal{K}$. For all $z \in \mathcal{K}$ and $\lambda \in L$, one has, for $|\lambda| \leq 2A+1$,
$$ \left| \frac{\gamma_{\lambda} }{(\lambda - z)^2} \right| \leq \frac{\gamma_{\lambda}}{d^2} \leq \frac{1+ (2A+1)^2}{d^2} \cdot \frac{\gamma_{\lambda}}{1 + \lambda^2} $$
and for $|\lambda| \geq 2A+1$,
$$\left|  \frac{\gamma_{\lambda} }{(\lambda - z)^2} \right| \leq \frac{\gamma_{\lambda} }{ (|\lambda| - A)^2}
\leq \frac{4 \gamma_{\lambda} }{ \lambda^2} \leq  \frac{8 \gamma_{\lambda} }{1 + \lambda^2}.$$
 Hence, in order to prove the uniform convergence of \eqref{derivative} on compact sets, it is sufficient
 to check that $$\sum_{\lambda \in L} \frac{\gamma_{\lambda} }{1 + \lambda^2} < \infty,$$
 but this convergence is directly implied by the absolute convergence of the right-hand side of \eqref{derivative} for any particular value
 $z \in \mathbb{C} \backslash L$.

 The formula \eqref{derivative} applied to $z \in \mathbb{R}$ implies immediately that for all pairs $\{\lambda_1, \lambda_2\}$ of consecutive
points in $L$, with $\lambda_1 < \lambda_2$, the function $S_{\Lambda}$ is strictly
increasing on the interval $(\lambda_1, \lambda_2)$. Moreover, one has for $\lambda \in \{\lambda_1, \lambda_2\}$ and $z \rightarrow \lambda$,
$S_{\lambda}(z) \sim \gamma_{\lambda}/(\lambda - z)$, which implies that $S_{\Lambda}(z) \rightarrow -\infty$ for $z \rightarrow \lambda_1$ and
$z >\lambda_1$, and  $S_{\Lambda}(z) \rightarrow +\infty$ for $z \rightarrow \lambda_2$ and
$z <\lambda_2$. We deduce that $S_{\Lambda}$ is a bijection from $(\lambda_1, \lambda_2)$ to $\mathbb{R}$.

 It only remains to show the invariance by translation. If we fix $y \in \mathbb{R}$, then for all $a, b \in \mathbb{R}$, and for $x \geq 0$ large enough,
\begin{align*}
(\Lambda+y)([0,x+a]) & - (\Lambda+ y)([-x+b,0])  = \Lambda([-y, x+a-y])-  \Lambda([-x + b-y, -y]) \\ &
 =  \Lambda([0, x+a-y])-  \Lambda([-x + b-y, 0]) + O(\Lambda([-|y|,|y|])) \\ & =
 O(x/\log^2 x) + O(1) = O(x/\log^2 x),
 \end{align*}
 and the assumptions of Theorem \ref{theoremstieltjes} are satisfied. One has
 $$\Lambda + y = \sum_{\lambda \in L} \gamma_{\lambda} \delta_{\lambda + y},$$
 and then for all $z \in \mathbb{C} \backslash L$,
$$ S_{\Lambda + y} (z+y)  =  \lim_{c \rightarrow \infty} \, \sum_{\lambda \in (L+y) \cap [-c,c]} \, \frac{ \gamma_{\lambda-y} }{z + y- \lambda }
  =  \lim_{c \rightarrow \infty} \, \sum_{\lambda \in L \cap [-c-y,c-y]} \, \frac{ \gamma_{\lambda} }{z -  \lambda }, $$
which is equal to $S_{\Lambda}(z)$, provided that we check that
 $$\sum_{\lambda \in L \cap [-c-y,c-y]} \, \frac{ \gamma_{\lambda} }{z -  \lambda } - \sum_{\lambda \in L \cap [-c,c]} \, \frac{ \gamma_{\lambda} }{z -  \lambda }
 \underset{c \rightarrow \infty}{\longrightarrow} 0,$$
which is implied by
\begin{equation}
\sum_{\lambda \in L \cap  [-c-|y|,-c + |y|] }  \frac{ \gamma_{\lambda} }{|z -  \lambda| } + \sum_{\lambda \in L \cap  [c-|y|,c + |y|] }  \frac{ \gamma_{\lambda} }{|z -  \lambda| }
\underset{c \rightarrow \infty}{\longrightarrow} 0.  \label{conv}
\end{equation}
Now, for $c > |y|+|z| + 1$, the left-hand side of \eqref{conv} is smaller than or equal to
\begin{align*}
& \frac{\Lambda([-c-|y|, -c + |y|]) + \Lambda([c-|y|, c+ |y|])}{c -|z| - |y|} \\ \leq &
 \frac{|\Lambda([0,c+|y|]) - \Lambda([-c+|y|+1, 0])| + |\Lambda([0,c-|y|-1]) - \Lambda([-c-|y|,0])|}{ c-|y|-|z|} = O(1/\log^2 c),
 \end{align*}
for $c$ tending to infinity.
\end{proof}
The assumption of Theorem \ref{theoremstieltjes} depends on the fact that the measure $\Lambda$ is not too far from being symmetric with respect to a given point on
the real line.
The next proposition expresses this assumption in terms of the support $L$ of $\Lambda$ and the weights $(\gamma_{\lambda})_{\lambda \in L}$.
The following result gives a sufficient condition for Theorem \ref{theoremstieltjes}:
\begin{proposition} \label{propositionstieltjes}
Consider the measure
$$
\Lambda= \sum_{j\in \mathbb Z} {\gamma_j} \delta_{\lambda_j}
$$
where $(\lambda_j)_{j \in \mathbb{Z}}$ is strictly increasing and neither bounded from above nor from below, and $\gamma_j\in \mathbb{R}_+^*$. Let $L$ be the set $\{\lambda_j, j \in \mathbb{Z}\}$. Assume that for some $c > 0$,
$$\sum_{j=0}^k \gamma_j =  ck + O(k/\log^2 k) \; \; \mathrm{   and   } \; \; \sum_{j=0}^k \gamma_{-j} =  ck + O(k/\log^2 k),$$
when $k\to\infty$. If for $x\to\infty$ one has $ \operatorname{Card} (L \cap [0,x] ) = O(x)$ and for all $a, b \in \mathbb{R}$,
$\operatorname{Card} (L \cap [0,x+a] ) - \operatorname{Card} (L \cap [-x+b,0] ) = O(x/\log^2 x)$, then the assumptions of Theorem \ref{theoremstieltjes} are satisfied.
\end{proposition}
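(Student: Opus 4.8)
The plan is to verify the single hypothesis required by Theorem~\ref{theoremstieltjes}, namely $\Lambda([0,x+a]) - \Lambda([-x+b,0]) = O(x/\log^2 x)$ for all $a,b\in\mathbb R$, by reducing it to the two asymptotic-symmetry hypotheses we are given: one on the partial sums $\sum_{j=0}^{k}\gamma_{\pm j}$ indexed by $\mathbb Z$, and one on the counting function of the support $L$. The only real work is translating between "partial sum of weights up to a given \emph{index}" and "total weight of the points lying in a given \emph{interval}".

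First I would fix notation. For $x\ge 0$ put $N^{+}(x)=\operatorname{Card}(L\cap[0,x])$ and $N^{-}(x)=\operatorname{Card}(L\cap[-x,0])$; since $(\lambda_j)_{j\in\mathbb Z}$ is unbounded above and below, $N^{\pm}(x)\to\infty$. Let $m_0$ be the smallest index with $\lambda_{m_0}\ge 0$; then for $x$ large enough $L\cap[0,x]=\{\lambda_{m_0},\dots,\lambda_{M(x)}\}$ with $M(x)=m_0+N^{+}(x)-1$, so
$$\Lambda([0,x])=\sum_{j=m_0}^{\,m_0+N^{+}(x)-1}\gamma_j.$$
Splitting off the finite block of indices lying between $0$ and $m_0$ (its contribution is a constant, independent of $x$, and unchanged under a shift $x\mapsto x+a$) and applying the hypothesis $\sum_{j=0}^{k}\gamma_j=ck+O(k/\log^2 k)$ with $k=m_0+N^{+}(x)-1$, one obtains a constant $C^{+}$ with
$$\Lambda([0,x])=c\,N^{+}(x)+C^{+}+O\!\big(N^{+}(x)/\log^{2}N^{+}(x)\big).$$
Because $N^{+}(x)=O(x)$ and $t\mapsto t/\log^{2}t$ is eventually increasing, the error term is $O(x/\log^{2}x)$. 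The same argument, using instead $\sum_{j=0}^{k}\gamma_{-j}=ck+O(k/\log^{2}k)$ and the bound $N^{-}(x)=O(x)$ (which follows by comparing $N^{+}$ and $N^{-}$ via $\operatorname{Card}(L\cap[0,x])-\operatorname{Card}(L\cap[-x,0])=O(x/\log^{2}x)$ together with $N^{+}(x)=O(x)$), gives a constant $C^{-}$ with
$$\Lambda([-x,0])=c\,N^{-}(x)+C^{-}+O(x/\log^{2}x).$$

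Finally, for fixed $a,b\in\mathbb R$ I would substitute $x\mapsto x+a$ in the first estimate and write $[-x+b,0]=[-(x-b),0]$ to use the second with $x-b$ in place of $x$; since $(x\pm\text{const})/\log^{2}(x\pm\text{const})=O(x/\log^{2}x)$ and the constants $C^{\pm}$ are unaffected by these shifts,
$$\Lambda([0,x+a])-\Lambda([-x+b,0])=c\big(N^{+}(x+a)-N^{-}(x-b)\big)+(C^{+}-C^{-})+O(x/\log^{2}x).$$
Here $N^{+}(x+a)-N^{-}(x-b)=\operatorname{Card}(L\cap[0,x+a])-\operatorname{Card}(L\cap[-x+b,0])=O(x/\log^{2}x)$ by hypothesis, and $C^{+}-C^{-}$ is a constant, hence $o(x/\log^{2}x)$. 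This yields $\Lambda([0,x+a])-\Lambda([-x+b,0])=O(x/\log^{2}x)$, which is precisely the hypothesis of Theorem~\ref{theoremstieltjes}, completing the proof.

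The only point requiring care is the indexing bookkeeping in the second paragraph: checking that $m_0+N^{+}(x)-1$ is indeed the largest index of $L\cap[0,x]$, treating the cases $m_0\ge 0$ and $m_0<0$ (and likewise for the left interval) uniformly, and confirming that the monotonicity of $t/\log^{2}t$ lets one absorb $N^{\pm}(x)=O(x)$ into an $O(x/\log^{2}x)$ error. Everything else is routine manipulation of $O$-estimates. It is worth noting that the fact that the same constant $c$ appears in both partial-sum hypotheses is essential: otherwise the leading terms $c\,N^{+}$ and $c\,N^{-}$ would not cancel up to $O(x/\log^{2}x)$.
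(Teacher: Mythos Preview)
Your proof is correct and follows essentially the same approach as the paper: both arguments translate $\Lambda([0,x+a])$ and $\Lambda([-x+b,0])$ into partial sums of the $\gamma_j$ via the counting function of $L$, apply the asymptotics $\sum_{j=0}^{k}\gamma_{\pm j}=ck+O(k/\log^{2}k)$, and then invoke $N^{\pm}(x)=O(x)$ together with the hypothesis on $\operatorname{Card}(L\cap[0,x+a])-\operatorname{Card}(L\cap[-x+b,0])$ to control the remaining terms. The only cosmetic difference is notation (the paper uses $N(y)$ for the largest index $j$ with $\lambda_j\le y$, you use $N^{\pm}(x)$ and the base index $m_0$), and the bookkeeping you flag at the end is handled correctly.
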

\begin{proof}
For $y \in \mathbb{R}$, let $N(y)$ (resp. $N(y-)$) be the largest index $j$ such that $\lambda_j \leq y$ (resp. $\lambda_j < y$).
One has, for $a, b \in \mathbb{R}$ and for $x$ large enough, $$\Lambda([0,x+a]) = \sum_{j=N(0-) +1}^{N(x+a)} \gamma_j \; \; \mathrm{   and   } \; \;
\Lambda([-x+b,0]) =  \sum_{j=N((-x+b)-)}^{N(0)} \gamma_j,$$
which implies that for $x \rightarrow \infty$ and then $ N(x+a) \rightarrow \infty$, $N((-x+b)-) \rightarrow - \infty$:
\begin{align*}
\Lambda([0,x+a])  - \Lambda([-x+b,0]) & = c ( N(x+a) - |N((-x+b)-)|) \\ & + O\left(\frac{N(x+a)}{ \log^2 (N(x+a))} + \frac{|N((-x+b)-)|}{ \log^2 |N((-x+b)-)|} \right).
\end{align*}
Now, we have the following estimates: $$N(x+a) = \operatorname{Card} (L \cap [0,x+a] )  + O(1) = O(x+a) + O(1) = O(x),$$
  $$\frac{N(x+a)}{ \log^2 (N(x+a))}  = O(x/\log^2 x),$$
$$N(x+a) - |N((-x+b)-)| = \operatorname{Card} (L \cap [0,x+a] ) - \operatorname{Card} (L \cap [-x+b,0] ) + O(1) = O(x/\log^2 x),$$
$$ |N((-x+b)-)| \leq N(x+a) + |N(x+a) - |N((-x+b)-)| | \leq O(x) + O(x/\log^2 x) = O(x)$$
and
$$ \frac{|N((-x+b)-)|}{ \log^2 |N((-x+b)-)|}= O(x/\log^2 x).$$
Putting all together gives:
$$\Lambda([0,x+a])  - \Lambda([-x+b,0])  = O(x/\log^2 x)$$
and then the assumptions of Theorem \ref{theoremstieltjes} are satisfied.
\end{proof}
As written in the statement of Theorem \ref{theoremstieltjes}, the function $S_{\Lambda}$ induces a bijection between each interval $(\lambda_1,\lambda_2)$, $\lambda_1$ and
$\lambda_2$ being two consecutive points of $L$, and the real line. It is then natural to study the inverse of this bijection, which should map each element of $\mathbb{R}$ to
a set of points interlacing with $L$. The precise statement we obtain is the following:

\begin{proposition} \label{L'}
Let $\Lambda$ be a measure, whose support $L$ is neither bounded from above nor from below, and satisfying the assumptions of Theorem \ref{theoremstieltjes}.
Then, for all $h \in \mathbb{R}$, the set $S^{-1}_{\Lambda} (h)$ of $z \in \mathbb{C} \backslash\{L\}$ such that $S_{\Lambda}(z) = h$ is included in $\mathbb{R}$, and
interlaces with $L$, i.e. it contains exactly one point in each open interval between two consecutive points of $L$. Moreover, if $\Lambda$ satisfies the assumptions of Proposition \ref{propositionstieltjes}, then
it is also the case for the set $L' := S^{-1}_{\Lambda} (h)$, i.e. for $x$ going to infinity, one has $\operatorname{Card} (L' \cap [0,x] )  = O(x)$ and for all $a, b \in \mathbb{R}$,
$\operatorname{Card} (L' \cap [0,x+a] ) - \operatorname{Card} (L' \cap [-x+b,0] ) = O(x/\log^2 x)$.
\end{proposition}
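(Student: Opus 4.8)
The plan is to split the statement into two claims: that every solution of $S_\Lambda(z)=h$ is real and the solution set interlaces with $L$, and, under the extra hypotheses of Proposition \ref{propositionstieltjes}, the two counting estimates for $L'$. For the first claim I would begin by ruling out non-real solutions through an imaginary-part computation. Fix $z=x+iy$ with $y\neq 0$; since each $\lambda\in L$ is real, $\operatorname{Im}\bigl(1/(\lambda-z)\bigr)=y/|\lambda-z|^2$ has the sign of $y$, so every symmetric partial sum $\sum_{\lambda\in L\cap[-c,c]}\gamma_\lambda/(\lambda-z)$ has imaginary part $y\sum_{\lambda\in L\cap[-c,c]}\gamma_\lambda/|\lambda-z|^2$. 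Fixing any $\lambda_0\in L$, once $c$ is large this imaginary part has absolute value at least $|y|\,\gamma_{\lambda_0}/|\lambda_0-z|^2>0$; letting $c\to\infty$ and using the convergence asserted in Theorem \ref{theoremstieltjes}, we get that $\operatorname{Im} S_\Lambda(z)$ has the sign of $y$ and is nonzero, so $S_\Lambda(z)\neq h$. Since the points of $L$ are poles of $S_\Lambda$, it follows that $S^{-1}_\Lambda(h)\subseteq\mathbb R\setminus L$.

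For the real solutions, Theorem \ref{theoremstieltjes} states that on each interval $(\lambda_j,\lambda_{j+1})$ between consecutive points of $L$ the function $S_\Lambda$ is a strictly increasing bijection onto $\mathbb R$, hence attains the value $h$ there exactly once. Combined with the previous paragraph, $L'=S^{-1}_\Lambda(h)$ contains exactly one point in each such open interval; in particular it is discrete (it is also the zero set of the nonconstant meromorphic function $S_\Lambda-h$) and unbounded above and below, so we may enumerate $L'=\{\mu_j\}_{j\in\mathbb Z}$ with $\lambda_j<\mu_j<\lambda_{j+1}$ for all $j$, i.e. $L$ and $L'$ form a single strictly interlaced increasing sequence.

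Now assume $\Lambda$ satisfies the hypotheses of Proposition \ref{propositionstieltjes}. I would deduce the counting bounds for $L'$ directly from the interlacing: for any $y\ge 0$, writing $\lambda_p=\min\{\lambda\in L:\lambda\ge 0\}$ and $\lambda_q=\max\{\lambda\in L:\lambda\le y\}$, the points $\mu_p,\dots,\mu_{q-1}$ all lie in $[0,y]$, the two points $\mu_{p-1},\mu_q$ may or may not, and no other $\mu_j$ does, so $\operatorname{Card}(L'\cap[0,y])$ differs from $\operatorname{Card}(L\cap[0,y])=q-p+1$ by at most $1$ (and the degenerate cases where $[0,y]$ contains at most one point of $L$ are trivially $O(1)$); the same holds for intervals $[-y,0]$. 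Applying this with $y=x+a$ and, after reflection, with $y=x-b$, and invoking the assumed estimates for $L$, we obtain $\operatorname{Card}(L'\cap[0,x])=\operatorname{Card}(L\cap[0,x])+O(1)=O(x)$ and
$$\operatorname{Card}(L'\cap[0,x+a])-\operatorname{Card}(L'\cap[-x+b,0])=\operatorname{Card}(L\cap[0,x+a])-\operatorname{Card}(L\cap[-x+b,0])+O(1)=O(x/\log^2 x),$$
which is what is claimed.

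The only genuine obstacle is the first step: showing that the conditionally convergent series defining $S_\Lambda(z)$ has strictly nonzero imaginary part off the real axis. This is handled above by noting that the imaginary parts of the symmetric partial sums are monotone in $c$ and already bounded below by a fixed positive (resp. above by a fixed negative) quantity on the upper (resp. lower) half-plane; alternatively one could read this off the representation \eqref{int}, where the $z$-dependent tail integrals are dominated by a $z$-independent integrable bound, so the sign of $\operatorname{Im} S_\Lambda(z)$ is governed by the finitely many central terms. Everything else is routine bookkeeping with the interlacing property and the bijection statement of Theorem \ref{theoremstieltjes}.
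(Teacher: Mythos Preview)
Your proposal is correct and follows essentially the same route as the paper: you rule out non-real solutions by computing $\operatorname{Im} S_\Lambda(z)$ as a limit of partial sums whose terms all have the sign of $\operatorname{Im} z$, invoke the bijection statement of Theorem~\ref{theoremstieltjes} on each interval $(\lambda_j,\lambda_{j+1})$ for the real part, and then transfer the counting bounds from $L$ to $L'$ via the interlacing inequality $|\operatorname{Card}(L'\cap I)-\operatorname{Card}(L\cap I)|\le 1$. The paper states this last inequality in one line for arbitrary bounded intervals $I$, whereas you spell out the enumeration $\lambda_j<\mu_j<\lambda_{j+1}$ more explicitly, but the content is identical.
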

\begin{proof}
The interlacing property of points of $S^{-1}_{\Lambda}(h) \cap \mathbb{R}$ comes from the discussion above, so the first part of the proposition is proven if we check
that $S_{\Lambda}(z) \notin \mathbb{R}$ if $z \notin \mathbb{R}$. Now, for all $z \in \mathbb{C} \backslash  L$,
\begin{align*}
\Im \left( S_{\Lambda}(z) \right) =
\lim_{c \rightarrow \infty} \sum_{\lambda \in L \cap [-c,c]} \Im \left( \frac{\gamma_{\lambda}}{\lambda -z} \right)
& = \lim_{c \rightarrow \infty} \sum_{\lambda \in L \cap [-c,c]} \frac{ - \gamma_{\lambda} \, \Im (\lambda-z)}{ \Re^2(\lambda-z) + \Im^2(\lambda- z) }\\
& = \lim_{c \rightarrow \infty} \sum_{\lambda \in L \cap [-c,c]} \frac{ \gamma_{\lambda} \Im (z)}{ \Re^2(\lambda-z) + \Im^2(z) }.
\end{align*}
If $z \notin \mathbb{R}$, each term of the last sum is nonzero and has the same sign as $\Im(z)$. One deduces that
 $\Im \left( S_{\Lambda}(z) \right)$ has the same properties, and then $S_{\Lambda}(z) \notin \mathbb{R}$.

 Now, the interlacing property implies that for any finite interval $I$, $$| \operatorname{Card} (L' \cap I) -  \operatorname{Card} (L \cap I) | \leq 1.$$
 If $\Lambda$ satisfies the assumptions of Proposition \ref{propositionstieltjes}, then for $a, b \in \mathbb{R}$ and for $x$ going to infinity,
 $$ \operatorname{Card} (L' \cap [0,x] )  =  \operatorname{Card} (L \cap [0,x] ) + O(1) = O(x) + O(1) = O(x)$$
 and
 \begin{align*}
 \operatorname{Card} (L' \cap [0,x+a] ) & - \operatorname{Card} (L' \cap [-x+b,0] )   = \operatorname{Card} (L \cap [0,x+a] ) \\ &  - \operatorname{Card} (L \cap [-x+b,0] ) + O(1)
  = O(x/\log^2x).
 \end{align*}
 \end{proof}

 Proposition \ref{L'} shows that the Stieltjes transform gives a way to construct a discrete subset of $\mathbb{R}$ from another, provided that we get
 a family $(\gamma_j)_{j \in \mathbb{Z}}$ of weights and a parameter $h \in \mathbb{R}$. In the next section, we use and randomize this procedure in order to
define a family of Markov chains satisfying some remarkable properties.
 \section{Stieltjes Markov chains}
 In order to put some randomness in the construction above, we need to define precisely a measurable space in which the point processes will be contained.
 The choice considered here is the following:
 \begin{itemize}
 \item We define $\mathcal{L}$ as the family of all the discrete subsets $L$ of $\mathbb{R}$, unbounded from above and from below, and satisfying the assumptions of
 Proposition \ref{propositionstieltjes}, i.e. for $x$
 going to infinity, $ \operatorname{Card} (L \cap [0,x] ) = O(x)$ and for all $a, b \in \mathbb{R}$,
$\operatorname{Card} (L \cap [0,x+a] ) - \operatorname{Card} (L \cap [-x+b,0] ) = O(x/\log^2 x)$.
\item We define, on $\mathcal{L}$, the $\sigma$-algebra $\mathcal{A}$ generated by the maps $L \mapsto \operatorname{Card} (L  \cap I)$ for all open, bounded intervals $I \subset
\mathbb{R}$, which is also the $\sigma$-algebra generated by the maps $L \mapsto  \operatorname{Card} (L  \cap B)$ for all Borel sets $B \subset \mathbb{R}$.
 \end{itemize}
 A similar choice of measurable space has to be made for the weights $(\gamma_j)_{j \in \mathbb{Z}}$:
 \begin{itemize}
 \item We define $\Gamma$ as the family of doubly infinite sequences $(\gamma_j)_{j \in \mathbb{Z}}$ satisfying the assumptions of Proposition \ref{propositionstieltjes}, i.e.
 for $k$ going to infinity,
 $$\sum_{j=0}^k \gamma_j =  ck + O(k/\log^2 k) \; \; \mathrm{   and   } \; \; \sum_{j=0}^k \gamma_{-j} =  ck + O(k/\log^2 k),$$
where $c > 0$ is a constant.
\item We define, on $\Gamma$, the $\sigma$-algebra $\mathcal{C}$ generated by the coordinate maps $\gamma_j$, $j \in \mathbb{Z}$.
\end{itemize}

Let $\mathcal{D}$ be the map from $\mathcal{L} \times \Gamma \times \mathbb{R}$ to $\mathcal{L}$, defined by:
$$\mathcal{D}(L, (\gamma_j)_{j \in \mathbb{Z}}, h) = S^{-1}_{ \sum_{j \in \mathbb{Z}} \gamma_j \delta_{\lambda_j}} (h),$$
where $\lambda_j$ is the unique increasing labeling of $L$ so that $\lambda_{-1}<0\le \lambda_0$.
Proposition \ref{L'} shows that this is indeed a map to $\mathcal L$. It is easy to show that $\mathcal D$ is measurable. Now for any probability measure $\Pi$ on $\Gamma\times \mathbb R$, it naturally defines a Markov chain $(X_k)_{k \geq 0}$ on $\mathcal L$. To get $X_{k+1}$ from $X_k$, just take a fresh sample $G_k$ (independent of $X_k$ and its past) and set $$X_{k+1}=\mathcal D(X_k,G_k).$$
By construction, $X_k$ is then a time-homogeneous Markov chain.

Clearly, if the distribution of $X_0$ is invariant under translations of $\mathbb R$, and the distribution of the $((\gamma_j)_{j \in \mathbb{Z}},h)$ in $G_k$ is invariant under translations of the indices $j$, it follows that $X_1$ also has a translation-invariant distribution.

There are two important examples of probability measures $\Pi$ for which this construction applies:
\begin{itemize}
\item Under $\Pi$, $(\gamma_j)_{j \in \mathbb{Z}}$ is a family of i.i.d, square-integrable random variables, and $h$ is independent of $(\gamma_j)_{j \in \mathbb{Z}}$.
\item Under $\Pi$, $(\gamma_j)_{j \in \mathbb{Z}}$ is a family of random variables, $n$-periodic for some $n \geq 1$, such that
$(\gamma_0, \gamma_1, \dots, \gamma_{n-1}) = (\gamma_1, \dots, \gamma_{n-1}, \gamma_0)$ in law, and $h$ is independent of $(\gamma_j)_{j \in \mathbb{Z}}$.
\end{itemize}
The fact that  $(\gamma_j)_{j \in \mathbb{Z}}$ is almost surely in $\Gamma$ comes from the law of the iterated logarithm in the first example, and directly from the
periodicity in the second example.

\section{Periodic Stieltjes Markov chains}

Consider the case when
$$\Lambda =\sum_{j\in \mathbb Z} \gamma_j \delta_{\lambda_j}$$
is invariant by translation of  $2\pi n$, and when there are $n$ point masses in every interval of length $2\pi n$ with total weight $2n$. In this case, $\Lambda$ can be thought as $2n$ times the lifting of the measure
$$
\sigma=\sum_{j=0}^{n-1} \frac{\gamma_j}{2n} \delta_{e^{i{\lambda_j}/n}}
$$
on the unit circle $\mathbb U$ under a covering map. Moreover, with $u=e^{iz/n}$ the Stieltjes transform of $\Lambda$ can be expressed in terms of $\sigma$ by
$$
S_\Lambda(z) = \sum_{j=0}^{n-1} i \frac{\gamma_j}{2n} \frac{e^{i\lambda_j/n}  + u}{e^{i\lambda_j/n} - u}.
$$
Indeed, periodicity implies that for $z\notin L$, we have
\begin{align*}
S_\Lambda (z) &  =  \underset{k \rightarrow \infty}{\lim} \sum_{j = - k n}^{ kn-1}  \frac{\gamma_j}{\lambda_j - z}
= \underset{k \rightarrow \infty}{\lim} \sum_{j=0}^{n-1}  \gamma_j \, \left( \sum_{\ell=-k}^{k-1} \frac{1}{2 \pi n\ell+ \lambda_j - z} \right)
\\ & = \sum_{j=0}^{n-1} \gamma_j  \left( \underset{k \rightarrow \infty}{\lim} \sum_{\ell=-k}^{k-1} \frac{1}{2 \pi n \ell + \lambda_j - z} \right)
= \frac{1}{2n} \,  \sum_{j=0}^{n-1} \gamma_j \cot \left(\frac{\lambda_j - z}{2n} \right).
\end{align*}
Therefore, if we set $\rho_j := \gamma_j/2n$ and
$u_j = e^{i \lambda_j/n}$, we can check that
$\mathcal{D} (L, (\gamma_n)_{n \in \mathbb{Z}}, h)$
is the set of $z \in \mathbb{R}$, such that
$e^{iz/n}$ satisfies \eqref{equationinterlacing},
for $h = i(1+\eta)/(1-\eta)$.

This property shows that the lifting $u \mapsto
\{z \in \mathbb{R}, e^{iz/n} = u  \}$ from $\mathbb{U}$
to $\mathbb{R}$ defined above transforms the Markov
chain defined in Section 2 to the Markov chain defined
in Section 4. In particular, from
Propositions \ref{circularinvariance} and \ref{CJEinvariance}, we deduce the following results:
\begin{theorem}
Let $\Pi$ be a probability measure on the space $(\Gamma \times \mathbb{R}, \mathcal{C} \otimes \mathcal{B}(\mathbb{R}))$, under which the following
holds, for some integer $n \geq 1$:
\begin{itemize}
\item Almost surely under $\Pi$, $(\gamma_n)_{n \in \mathbb{Z}}$ is $n$-periodic, and $\sum_{j=0}^{n-1} \gamma_j = 2n$.
\item The law of $(\gamma_0, \dots, \gamma_{n-1})$ is invariant by permutation of the coordinates.
\item The sequence $(\gamma_j)_{j \in \mathbb{Z}}$ is independent of $h$.
\end{itemize}
Let $\mathbb{Q}$ be a probability on $(\mathcal{L}, \mathcal{A})$ under which almost surely, the set $L$ is $(2n \pi)$-periodic and
contains exactly $n$ points in the interval $[0, 2 \pi n)$: in this case, there exists a sequence $( u_1, \dots, u_{n})$ of elements of $\mathbb{U}$,
with increasing argument in $[0, 2\pi)$, and such that
$$L = \{ z \in \mathbb{R}, e^{iz/n} \in \{ u_1, \dots, u_{n}\} \}.$$
Under the probability $\mathbb{Q} \otimes \pi$, one can define a random probability measure $\sigma$ on the unit circle by:
$$\sigma:= \frac{1}{2n} \sum_{j=1}^{n} \gamma_j \delta_{u_j}.$$
Let us assume that the joint law of the Verblunsky coefficients $(\alpha_0, \dots, \alpha_{n-1})$ of $\sigma$ is invariant by rotation, i.e.
for all $u \in \mathbb{U}$,
$$(\alpha_0 u, \dots \alpha_{n-1} u) = (\alpha_0, \dots, \alpha_{n-1})$$
in distribution.
Then, the probability measure $\mathbb{Q}$ is an invariant measure for the Markov chain associated to $\pi$.
\end{theorem}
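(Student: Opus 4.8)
The plan is to reduce this theorem directly to Proposition~\ref{circularinvariance} via the lifting dictionary established in the preceding paragraphs of Section~6. First I would set up the correspondence: given the probability $\mathbb{Q}$ on $(\mathcal{L},\mathcal{A})$ concentrated on $(2\pi n)$-periodic configurations with exactly $n$ points in $[0,2\pi n)$, the map $L \mapsto \{u_1,\dots,u_n\}$ with $u_j = e^{i\lambda_j/n}$ pushes $\mathbb{Q}$ forward to a probability measure $\mathbb{P}$ on sets of $n$ points of $\mathbb{U}$. Similarly, from $\Pi$ I extract $\pi_{n}$ on the product of the unit sphere of $\mathbb{C}^n$ and $\mathbb{R}$: since only the squared moduli $\rho_j = \gamma_j/2n$ and $h$ enter the dynamics, and since $\Pi$ gives $(\gamma_0,\dots,\gamma_{n-1})$ a permutation-invariant law with $\sum \gamma_j = 2n$ independent of $h$, I can realize $(\rho_0,\dots,\rho_{n-1})$ as $(|a_1|^2,\dots,|a_n|^2)$ for some $\pi_n$ whose first marginal is permutation-invariant and rotation-invariant coordinatewise (e.g.\ take $a_j = \sqrt{\rho_j}\,e^{i\theta_j}$ with independent uniform phases $\theta_j$). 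This is the example flagged just before the theorem; I should state it cleanly but it is routine.

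Next I would invoke the explicit computation carried out in the displayed equations right before the statement: the key identity is that for $u = e^{iz/n}$,
\begin{equation}
S_\Lambda(z) = \sum_{j=0}^{n-1} i\,\rho_j\,\frac{u_j + u}{u_j - u},
\end{equation}
so that $z \in \mathcal{D}(L,(\gamma_j)_{j\in\mathbb{Z}},h)$ if and only if $e^{iz/n}$ solves \eqref{equationinterlacing} with $h = i(1+\eta)/(1-\eta)$. Consequently, the pushforward under the covering $u \mapsto \{z : e^{iz/n}=u\}$ intertwines the Stieltjes Markov chain of Section~4 (driven by $\Pi$) with the spectral Markov chain of Section~2 (driven by $\pi_n$): one step of $\mathcal{D}$ lifts one step of the eigenvalue transition. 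I would also note that the measure $\sigma = \frac{1}{2n}\sum_j \gamma_j \delta_{u_j}$ appearing in the theorem is exactly the measure $\sigma = \sum_j \rho_j \delta_{u_j}$ of Proposition~\ref{circularinvariance}, so the Verblunsky-coefficient hypothesis transfers verbatim.

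With the dictionary in place, the proof concludes: the hypothesis that the joint law of $(\alpha_0,\dots,\alpha_{n-1})$ under $\mathbb{Q}\otimes\pi$ is rotation-invariant is precisely the hypothesis of Proposition~\ref{circularinvariance}, which then gives that $\mathbb{P}$ (the law of $\{u_1,\dots,u_n\}$) is invariant for the spectral Markov chain. Pulling back through the covering map — which is a bijection between $(2\pi n)$-periodic $L \in \mathcal{L}$ with $n$ points per period and $n$-point subsets of $\mathbb{U}$ — yields that $\mathbb{Q}$ is invariant for the Markov chain associated to $\pi$ (equivalently, to $\Pi$), as claimed.

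The main obstacle I anticipate is purely bookkeeping rather than conceptual: one must check that the lifting map genuinely lands in $\mathcal{L}$ and is measurable with respect to $\mathcal{A}$ (this follows from Proposition~\ref{L'} and the periodicity, but should be mentioned), and — more delicately — that the randomization structures match, i.e.\ that given $\mathbb{Q}$ and $\Pi$ one can actually build a single $\pi$ on the sphere$\,\times\,\mathbb{R}$ reproducing the prescribed $(\rho_j)$ and $h$ with the required symmetries, and that the independence of the fresh sample $G_k$ from the past is compatible with the independence of the reflection vector from $U_k$ in Section~2. All of this is straightforward but needs to be spelled out carefully so that the citation of Proposition~\ref{circularinvariance} is legitimate; the analytic heart (the $\cot$ summation and the interlacing) has already been done in the text preceding the statement.
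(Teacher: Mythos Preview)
Your proposal is correct and follows exactly the paper's approach: the paper states the theorem immediately after the lifting computation (the $\cot$ summation showing $S_\Lambda(z)$ equals the left-hand side of \eqref{equationinterlacing}) and introduces it with the sentence ``from Propositions~\ref{circularinvariance} and~\ref{CJEinvariance}, we deduce the following results,'' so the proof is precisely the reduction to Proposition~\ref{circularinvariance} via the covering correspondence that you outline. Your write-up is in fact more explicit than the paper's, which leaves the bookkeeping (matching $\Pi$ with a $\pi$ on the sphere, measurability, etc.) entirely implicit.
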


\begin{theorem} \label{betafinite}
Let $\beta > 0$, $n \geq 1$, and let $\Pi$ be a probability measure under which the following holds almost surely:
\begin{itemize}
\item The sequence $(\gamma_n)_{n \in \mathbb{Z}}$ is $n$-periodic.
\item The tuple $(\gamma_0/2n, \dots, \gamma_{n-1}/2n)$ follows a Dirichlet distribution with all parameters equal to $\beta/2$.
\item The sequence $(\gamma_j)_{j \in \mathbb{Z}}$ is independent of $h$.
\end{itemize}
Let $\mathbb{Q}_{n, \beta}$ be the distribution of the set
$$\{z \in \mathbb{R} , e^{iz/n} \in V\},$$
where $V$ is a subset of $\mathbb{U}$ following $\mathbb{P}_{n,\beta}$, i.e. a circular beta ensemble with parameter $\beta$.
Then, $\mathbb{Q}_{n, \beta}$ is an invariant measure for the Markov chain associated to $\Pi$.
\end{theorem}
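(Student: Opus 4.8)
The plan is to deduce Theorem \ref{betafinite} from Proposition \ref{CJEinvariance} by showing that the periodic Stieltjes Markov chain attached to $\Pi$ is, via the covering map $z\mapsto e^{iz/n}$, an exact copy of the spectral reflection chain of Section 2 attached to $\pi_{n,\beta}$ and started from the circular beta ensemble $\mathbb P_{n,\beta}$. (Alternatively one can just verify that $\mathbb Q_{n,\beta}$ and $\pi_{n,\beta}$ meet the hypotheses of the theorem stated immediately before this one, the required rotation-invariance of the Verblunsky coefficients being exactly the Killip--Nenciu computation recalled above; I would present the argument through Proposition \ref{CJEinvariance} since it is self-contained.)

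First I would dispose of the measure-theoretic bookkeeping. Any set $L=\{z\in\mathbb R:\ e^{iz/n}\in V\}$ with $V\subset\mathbb U$ an $n$-point set of distinct points is $(2\pi n)$-periodic with exactly $n$ points per period, hence trivially satisfies the growth conditions of Proposition \ref{propositionstieltjes}; so $\mathbb Q_{n,\beta}$ is a genuine law on $(\mathcal L,\mathcal A)$ and the lifting $\iota:\ V\mapsto L$ is a measurable bijection from $n$-point subsets of $\mathbb U$ (ordered by increasing argument in $[0,2\pi)$) onto the $(2\pi n)$-periodic members of $\mathcal L$ with $n$ points per period, with $\mathbb Q_{n,\beta}=\iota_*\mathbb P_{n,\beta}$. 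Under $\Pi$ the sequence $(\gamma_j)_{j\in\mathbb Z}$ is a.s.\ $n$-periodic with $\sum_{j=0}^{n-1}\gamma_j=2n$ (the Dirichlet law lives on the simplex), hence lies in $\Gamma$; putting $\rho_j:=\gamma_j/2n$, the tuple $(\rho_0,\dots,\rho_{n-1})$ has the Dirichlet$(\beta/2,\dots,\beta/2)$ law, which is exactly the $(\rho,h)$-marginal that $\pi_{n,\beta}$ prescribes (for the same law of $h$ as in $\Pi$).

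The key step is the conjugacy $\mathcal D\circ\iota=\iota\circ T$, where $T(V,(\rho_j),h)$ denotes the set of $u\in\mathbb U$ solving \eqref{equationinterlacing} with $h=i\frac{1+\eta}{1-\eta}$ — equivalently the spectral transition of Section 2, which depends on the sphere sample only through $(\rho_j)$ and $h$. This is precisely the computation opening this section: for $L=\iota(V)$ and $(\gamma_j)$ $n$-periodic one has $S_\Lambda(z)=\sum_{j=0}^{n-1} i\rho_j\frac{u_j+u}{u_j-u}$ with $u=e^{iz/n}$, and since the right-hand side is $2\pi n$-periodic in $z$ while (by Proposition \ref{L'}) the level set $S_\Lambda=h$ is real and interlaces $L$, the set $\mathcal D(\iota(V),(\gamma_j),h)$ is exactly the $\iota$-image of the solution set of \eqref{equationinterlacing}. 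With this in hand, one step of the chain associated to $\Pi$ from $\mathbb Q_{n,\beta}$ has the law of $\mathcal D(\iota(V),(\gamma_j),h)$ under $\mathbb P_{n,\beta}\otimes\Pi$, which by the conjugacy equals $\iota_*$ of the law of $T(V,(\rho_j),h)$ under $\mathbb P_{n,\beta}\otimes\pi_{n,\beta}$ (the $(\rho,h)$-marginals coincide); Proposition \ref{CJEinvariance} says this is $\iota_*\mathbb P_{n,\beta}=\mathbb Q_{n,\beta}$, proving invariance.

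I expect the only delicate point to be the conjugacy: one must make sure that every real solution of $S_\Lambda=h$ lifts from a genuine solution of \eqref{equationinterlacing} and conversely, with no spurious roots and none off the real axis. This is exactly where Proposition \ref{L'} and the strict monotonicity of $S_\Lambda$ between consecutive poles from Theorem \ref{theoremstieltjes} (matched with the monotonicity of the left side of \eqref{equationinterlacing} along each arc, already observed in Section 2) do the work; the remaining items — measurability of $\iota^{\pm1}$ and the bijection $h\in\mathbb R\leftrightarrow\{\eta:\ |\eta|=1,\ \eta\neq1\}$ — are routine.
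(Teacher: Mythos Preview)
Your proposal is correct and follows essentially the same route as the paper: the paper establishes in Section~5 the cotangent identity $S_\Lambda(z)=\sum_{j=0}^{n-1} i\rho_j\frac{u_j+u}{u_j-u}$ for periodic $\Lambda$, observes that this makes the lifting $\iota$ conjugate the Stieltjes chain $\mathcal D$ to the spectral reflection chain of Section~2, and then states Theorem~\ref{betafinite} as a direct consequence of Proposition~\ref{CJEinvariance}. Your write-up is slightly more explicit about the measure-theoretic verifications (that $\mathbb Q_{n,\beta}$ is carried by $\mathcal L$, that the weights lie in $\Gamma$, and that the $(\rho,h)$-marginals match $\pi_{n,\beta}$), but the substance is identical.
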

In the next section, we will let $n \rightarrow \infty$ and we will obtain a similar result in which the variables $(\gamma_n)_{n \geq 1}$ will be independent and identically distributed.

\section{An invariant measure for independent gamma random variables}

In Theorem \ref{betafinite}, we have found an invariant measure on $\mathcal{L}$, corresponding to a measure $\Pi$ under which the sequence $(\gamma_j)_{j \in \mathbb{Z}}$
is periodic, each period forming a renormalized Dirichlet distribution. For $n \geq 1$ and $\beta > 0$ fixed, and under $\Pi$, the sequence $(\gamma_j)_{j \in \mathbb{Z}}$ can be written
in function of a sequence $(g_j)_{j \in \mathbb{Z}}$ of i.i.d Gamma variables with parameter $\beta/2$, as follows:
$$\gamma_{j} = \frac{2 n g_k}{\underset{-n/2 < \ell \leq n/2}{\sum} g_{\ell}},$$
where $-n/2 < k \leq n/2$ and $k  \equiv j$ modulo $n$. For $\beta$ fixed, if we construct the sequence $(\gamma_j)_{j \in \mathbb{Z}}$ for all values of $n$, starting with the same
sequence $(g_j)_{j \in \mathbb{Z}}$, we obtain, by the law of large numbers, that for all $j \in \mathbb{Z}$, $\gamma_j$ tends almost surely to $4 g_j/\beta$ when $n$ goes to infinity.
Hence, if we want to make $n \rightarrow \infty$ in Theorem \ref{betafinite}, we should consider a measure $\Pi$ under which $(\beta \gamma_j/4)_{j \in \mathbb{Z}}$ is
a sequence of i.i.d. Gamma random variables of parameter $\beta/2$.

On the other hand, for $n$ going to infinity, the probability $\mathbb{Q}_{n, \beta}$ converges to a limiting measure $\mathbb{Q}_{\beta}$, which is the distribution of the so-called $\mathrm{Sine}_{\beta}$ {\it point process}, constructed in \cite{bib:KSt09} and
\cite{VV}. 

Therefore, taking the limit $n \rightarrow \infty$ in Theorem \ref{betafinite} suggests the following result, whose proof is given below:
\begin{theorem} \label{betainfinite}
Let $\beta > 0$, and let $\Pi$ be a probability measure under which the random variables $h$ and $(\gamma_j)_{j
 \in \mathbb{Z}}$ are all independent, $\gamma_j$ being
 equal to $4/\beta$ times a gamma random variable of parameter $\beta/2$. Then, the law $\mathbb{Q}_{\beta}$ of the $\mathrm{Sine}_{\beta}$ point process is
carried by the space $\mathcal{L}$ and it is an invariant measure for the Markov chain associated to $\Pi$.
\end{theorem}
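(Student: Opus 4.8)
The plan is to obtain Theorem~\ref{betainfinite} as a limit of Theorem~\ref{betafinite} as $n \to \infty$, using the coupling already set up in the excerpt: fix a sequence $(g_j)_{j \in \mathbb{Z}}$ of i.i.d.\ Gamma$(\beta/2)$ variables and, for each $n$, define the $n$-periodic weights $\gamma_j^{(n)} = 2n g_k / \sum_{-n/2 < \ell \le n/2} g_\ell$ (with $k \equiv j \bmod n$, $-n/2 < k \le n/2$), so that $\gamma_j^{(n)} \to 4 g_j/\beta =: \gamma_j^{(\infty)}$ almost surely for every fixed $j$, and the limiting weights are exactly the ones in the statement. On the point-measure side, let $X_0^{(n)}$ have law $\mathbb{Q}_{n,\beta}$ and $X_0^{(\infty)}$ have law $\mathbb{Q}_\beta$ ($=$ law of $\mathrm{Sine}_\beta$); it is known (from \cite{bib:KSt09}, \cite{VV}) that $\mathbb{Q}_{n,\beta} \Rightarrow \mathbb{Q}_\beta$ weakly on $(\mathcal{L}, \mathcal{A})$, and we may realize this convergence by a coupling in which $X_0^{(n)} \to X_0^{(\infty)}$ in the vague topology on point configurations. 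Let $h$ be a fixed independent real number (the value of $h$ plays no role beyond being independent, so we can condition on it). Write $X_1^{(n)} = \mathcal{D}(X_0^{(n)}, (\gamma_j^{(n)})_j, h)$ and $X_1^{(\infty)} = \mathcal{D}(X_0^{(\infty)}, (\gamma_j^{(\infty)})_j, h)$. Theorem~\ref{betafinite} gives $X_1^{(n)} \stackrel{d}{=} X_0^{(n)}$, so $X_1^{(n)} \Rightarrow X_0^{(\infty)}$; if we can show $X_1^{(n)} \to X_1^{(\infty)}$ in distribution (e.g.\ along the coupling, in probability, vaguely), then $X_1^{(\infty)} \stackrel{d}{=} X_0^{(\infty)}$, which is exactly invariance of $\mathbb{Q}_\beta$.

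The first genuine step is to check that $\mathbb{Q}_\beta$ is carried by $\mathcal{L}$, i.e.\ that a $\mathrm{Sine}_\beta$ configuration a.s.\ satisfies the hypotheses of Proposition~\ref{propositionstieltjes}: linear growth $\operatorname{Card}(L \cap [0,x]) = O(x)$ and the two-sided balance $\operatorname{Card}(L \cap [0,x+a]) - \operatorname{Card}(L \cap [-x+b,0]) = O(x/\log^2 x)$. Linear growth is immediate from the known intensity of $\mathrm{Sine}_\beta$ and a Borel--Cantelli argument on integer endpoints. The balance condition is where the variance bound for the point count of the circular (and Gaussian) beta ensembles — the input imported from \cite{NV19} and advertised in the introduction — enters: one writes $\operatorname{Card}(L \cap [0,x+a]) - \operatorname{Card}(L \cap [-x+b,0])$, subtracts its mean (which is $O(1)$ by translation-invariance of the intensity), and controls the centered count by a second-moment estimate of the form $\mathrm{Var}(\operatorname{Card}(L \cap I)) = O(\log |I|)$, which by Chebyshev plus Borel--Cantelli along a geometric sequence of scales yields a fluctuation of size $O(\sqrt{\log x}\,\log x) = o(x/\log^2 x)$, comfortably inside the budget. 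The same variance input also handles the weight-sum hypotheses $\sum_{j=0}^k \gamma_j^{(\infty)} = ck + O(k/\log^2 k)$: since $\gamma_j^{(\infty)} = 4g_j/\beta$ are i.i.d.\ with mean, say, $c$, the law of the iterated logarithm gives $\sum_{j=0}^k \gamma_j^{(\infty)} = ck + O(\sqrt{k \log\log k})$, again negligible against $k/\log^2 k$. So $(\gamma^{(\infty)}_j)_j \in \Gamma$ and $X_0^{(\infty)} \in \mathcal{L}$ almost surely, and $\mathcal{D}(X_0^{(\infty)}, \gamma^{(\infty)}, h)$ is well-defined by Proposition~\ref{L'}.

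The main obstacle is the continuity/convergence step $X_1^{(n)} \to X_1^{(\infty)}$ — showing that the map $\mathcal{D}$ is continuous in the relevant sense at the limiting data, so that vague convergence of configurations plus a.s.\ (coordinatewise) convergence of weights propagates through the Stieltjes transform and its level sets. Concretely, one must show that on any fixed bounded interval the truncated sums $\sum_{\lambda \in L^{(n)} \cap [-c,c]} \gamma_\lambda^{(n)} / (\lambda - z)$ converge to $S_{\Lambda^{(\infty)}}(z)$ uniformly on compacts away from the poles, \emph{uniformly in the truncation $c$} — this is exactly where the tail estimates from the proof of Theorem~\ref{theoremstieltjes} must be made uniform over $n$, and is the reason the precise $O(x/\log^2 x)$ / variance bounds are needed rather than softer ones. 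Once $S_{\Lambda^{(n)}} \to S_{\Lambda^{(\infty)}}$ locally uniformly (off the poles), the level set $S_{\Lambda^{(n)}}^{-1}(h)$ converges to $S_{\Lambda^{(\infty)}}^{-1}(h)$ because, by the strict-monotonicity part of Theorem~\ref{theoremstieltjes}, in each interval between consecutive points of $L^{(\infty)}$ the solution of $S_{\Lambda^{(\infty)}}(z) = h$ is simple and transversal (the derivative is nonzero there), so Hurwitz/implicit-function-type arguments pin down the root, while near a pole $\lambda$ of $\Lambda^{(n)}$ the root is trapped between $\lambda$ and its neighbor; combined with the fact that no points escape to or arrive from infinity on a fixed window (this again uses the uniform tail control), one gets vague convergence $X_1^{(n)} \to X_1^{(\infty)}$. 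I anticipate that making the tail/variance estimates uniform in $n$ and the careful bookkeeping near poles where $\gamma_\lambda^{(n)} \to 0$ is impossible (weights stay bounded below on any fixed window) will occupy most of the technical work; this is presumably deferred in part to the continuity results announced for Section~7.
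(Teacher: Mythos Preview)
Your proposal is correct and follows essentially the same route as the paper: couple the periodic Dirichlet weights to the i.i.d.\ gamma weights via a common sequence $(g_j)$, use Skorokhod to realize $\mathbb{Q}_{n,\beta}\Rightarrow\mathbb{Q}_\beta$ almost surely, verify $\mathbb{Q}_\beta$ is carried by $\mathcal{L}$ via the point-count bounds from \cite{NV19}, control the tail of the truncated Stieltjes transform uniformly in $n$ via the variance bound (Proposition~\ref{estimatequadraticCbeta}), and pass to the level sets by the strict-monotonicity sandwich you describe.

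Two small corrections on execution. First, nothing is deferred to Section~7: the proof of Theorem~\ref{betainfinite} is self-contained in Section~6, and the heavier continuity Theorem~\ref{topologytheorem} is only used later for the Hermite corners. Second, the paper does not aim for locally uniform convergence of $S_{\Lambda^{(n)}}$ off the poles; instead it proves an $L^1$ tail bound $\mathbb{E}\,|S_n(z)-S_{n,c}(z)|\le(1+|z|)\,\phi(c)$ with $\phi(c)\to 0$ (this is where \eqref{int} and Proposition~\ref{estimatequadraticCbeta} enter), combines it with the almost sure convergence of the truncated sums to get $S_n(z)\to S(z)$ \emph{in probability} at each rational $z$, and then extracts by diagonalization a subsequence $(n_k)$ along which convergence holds almost surely for all rational $z$ simultaneously. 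The level-set convergence $\mu_j^{n_k}\to\mu_j$ is then the elementary argument you sketch (pick rationals $q_1<\mu_j<q_2$ with $S(q_1)<h<S(q_2)$), and working along a subsequence is harmless since the target statement is distributional.
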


\begin{remark}
Since the variables $(\gamma_j)_{j \in \mathbb{Z}}$ are i.i.d. and square-integrable, we have already checked that the Markov chain associated to $\Pi$ is well-defined.
Recall what means the fact that $\mathbb{Q}_{\beta}$ is carried by $\mathcal{L}$: if $L$ is the set of points corresponding to a $\mathrm{Sine}_{\beta}$ process,
then $L$ is unbounded from above and from below, for $x$ going to infinity, $ \operatorname{Card} (L \cap [0,x] ) = O(x)$ and for all $a, b \in \mathbb{R}$,
$\operatorname{Card} (L \cap [0,x+a] ) - \operatorname{Card} (L \cap [-x+b,0] ) = O(x/\log^2 x)$.
\end{remark}
In order to show the theorem just above, we will use the following results, proven in \cite{NV19}:
\begin{proposition} \label{estimatequadraticCbeta}
Let $L$ be a random set of points in  $\mathbb{R}$,
whose distribution is $\mathbb{Q}_{n, \beta}$ or $\mathbb{Q}_{\beta}$.
Then, there exists $C > 0$, depending on $\beta$ but
not on $n$, such that for all $x > 0$,
$$\mathbb{E} [ \left(\operatorname{Card} (L \cap [0,x])
- x/2 \pi \right)^2] \leq C \log(2+x)$$
and
$$\mathbb{E} [ \left(\operatorname{Card} (L \cap [-x,0])
- x/2 \pi \right)^2] \leq C \log(2+x).$$
\end{proposition}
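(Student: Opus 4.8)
\emph{Strategy.} In both cases the counting function is, up to a deterministic bounded error, a constant times a Prüfer-type phase built from the transfer matrices of the ensemble; the independence of those transfer matrices --- via the Killip--Nenciu parametrisation for the circular ensemble, via the Brownian carousel for $\operatorname{Sine}_\beta$ --- reduces the estimate to a martingale computation in which the variance accumulates only logarithmically.

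\emph{Reduction (circular case).} Under $\mathbb{Q}_{n,\beta}$ the set $L$ is $2\pi n$-periodic with $n$ points per period, so adding a full period to $x$ adds exactly $n=(2\pi n)/(2\pi)$ points; hence $\operatorname{Card}(L\cap[0,x])-x/2\pi$ has the same distribution as $\operatorname{Card}(L\cap[0,x'])-x'/2\pi$ with $x'=x\bmod 2\pi n$, and we may assume $0\le x\le 2\pi n$. Then $\operatorname{Card}(L\cap[0,x])$ equals the number $N$ of points of the circular $\beta$-ensemble $V$ in an arc of angular length $x/n$, with $\mathbb{E}N=x/2\pi$. Fixing $1\in\mathbb{U}$ as a reference point (a.s.\ not an eigenvalue) and running the Szeg\H{o} recursion $\Phi_{j+1}(z)=z\,\Phi_j(z)-\overline{\alpha_j}\,\Phi_j^*(z)$ on $|z|=1$, one attaches to $V$ a relative Prüfer phase whose value after $n$ steps equals $2\pi N$ plus a deterministic $O(1)$ error; it therefore suffices to bound the variance of this phase uniformly in $n$.

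\emph{The logarithmic bound (circular case).} By Killip--Nenciu \cite{bib:KN04}, under $\mathbb{P}_{n,\beta}$ the Verblunsky coefficients $\alpha_0,\dots,\alpha_{n-1}$ are independent, $\alpha_{n-1}$ being uniform on $\mathbb{U}$ and $|\alpha_j|^2\sim\operatorname{Beta}(1,\beta(n-1-j)/2)$, so $\mathbb{E}|\alpha_j|^2\asymp 1/(\beta(n-j))$. Write the relative Prüfer phase as $\sum_{j=0}^{n-1}\Delta_j$, where $\Delta_j$ is determined by $\alpha_j$ together with the two endpoint phases entering step $j$, and let $\mathcal F_j=\sigma(\alpha_0,\dots,\alpha_j)$. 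Using the associated Doob martingale $M_j=\mathbb{E}[\sum_k\Delta_k\mid\mathcal F_j]$ one has $\operatorname{Var}\bigl(\sum_k\Delta_k\bigr)=\sum_j\mathbb{E}\bigl[(M_j-M_{j-1})^2\bigr]$; since revealing $\alpha_j$ shifts the current and all subsequent phases by $O(|\alpha_j|)$ with bounded cumulative effect, $\mathbb{E}[(M_j-M_{j-1})^2]\le C\,\mathbb{E}|\alpha_j|^2\,w_j$, where the weight $w_j\in[0,1]$ vanishes until the two endpoint phases have separated, so that $w_j$ is non-zero only for the $O(\min(x,n))$ indices $j$ that resolve the arc (those closest to the free end $j=n-1$). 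Summing over those active indices --- on which $\mathbb{E}|\alpha_j|^2$ runs through a harmonic progression $\asymp 1,\tfrac{1}{2},\tfrac{1}{3},\dots$ --- yields $\operatorname{Var}(N)\le C\log(2+x)$ with $C=C(\beta)$. For $0\le x\le 1$ one instead bounds $\mathbb{E}[N^2]$ directly from the one- and two-point correlation functions of $\mathbb{P}_{n,\beta}$, which are bounded uniformly in $n$ on arcs of length $\le 1/n$, giving $O(1)\le C\log 2$. The interval $[-x,0]$ is handled identically, using the reflection symmetry $z\mapsto\bar z$ of $\mathbb{P}_{n,\beta}$. This proves the estimate for $\mathbb{Q}_{n,\beta}$.

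\emph{Passage to $\operatorname{Sine}_\beta$.} The $\operatorname{Sine}_\beta$ process of \cite{VV} is the bulk microscopic limit of the circular $\beta$-ensemble, so $\mathbb{Q}_{n,\beta}\to\mathbb{Q}_\beta$ and, for each fixed $x$, $\operatorname{Card}(L\cap[0,x])$ converges in law to the corresponding $\operatorname{Sine}_\beta$ count; the uniform bound $\mathbb{E}[\operatorname{Card}(L\cap[0,x])^2]\le C\log(2+x)+(x/2\pi)^2$ just obtained gives uniform integrability of the squares, so the second moment passes to the limit and the bound holds for $\mathbb{Q}_\beta$. Alternatively, one argues directly from the Brownian carousel of \cite{VV}: the count on $[0,x]$ is $\tfrac{1}{2\pi}$ times the total winding of the driving hyperbolic diffusion, whose time-change is such that equal logarithmic intervals carry $O(1)$ of variance, whence the winding variance is $O(\log(2+x))$ by an It\^o estimate.

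\emph{Main obstacle.} The crux is uniformity in $n$: one must control the non-stationarity of the Verblunsky sequence --- the parameter $\beta(n-1-j)/2$ degenerates as $j\to n-1$, which is exactly where the phase fluctuates most --- and glue the microscopic regime $x=O(1)$, treated via correlation functions, to the macroscopic regime with a single $\beta$-dependent constant. The technical heart is the estimate $\mathbb{E}[(M_j-M_{j-1})^2]\le C\,\mathbb{E}|\alpha_j|^2\,w_j$, i.e.\ showing that a single Szeg\H{o} step perturbs the relative Prüfer phase --- now and in the future --- by a controlled amount.
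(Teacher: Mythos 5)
First, a remark on the comparison itself: the paper does not prove this proposition at all — it is imported from the companion paper \cite{NV19} — so your proposal can only be judged on its own merits. Its general route (Killip--Nenciu independence of the Verblunsky coefficients, a relative Pr\"ufer phase whose winding counts points in the arc, the carousel for $\operatorname{Sine}_\beta$) is indeed the natural family of arguments. The $2\pi n$-periodicity reduction, the rotation invariance giving mean $x/2\pi$, and the small-$x$ regime are fine. One small flaw in the passage to $\mathbb{Q}_\beta$: uniform integrability of the squares does not follow from the uniform second-moment bound you have just established; but this is easily repaired, since after Skorokhod representation Fatou's lemma already gives $\mathbb{E}[(N-x/2\pi)^2]\le\liminf_n\mathbb{E}[(N_n-x/2\pi)^2]$, which is all you need for an upper bound.

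The genuine gap is the key estimate $\mathbb{E}[(M_j-M_{j-1})^2]\le C\,\mathbb{E}|\alpha_j|^2\,w_j$ with $w_j$ supported on the $O(\min(x,n))$ indices closest to the free end $j=n-1$: it is not proved, and as stated it is false — in fact backwards. With arc length $x/n$ the relative Pr\"ufer phase gains $x/n$ per step deterministically, and the noise it feels at step $j$ is of size $|\alpha_j|$ times the distance of the relative phase to $2\pi\mathbb Z$. Hence the noise is suppressed on the first $\sim n/x$ steps (phases not yet separated), but is of full strength on the middle steps $n/x\lesssim j\lesssim n-n/x$; these genuinely contribute, each dyadic block of $n-j$ giving $\Theta(1)$ of variance, and summing $\mathbb{E}|\alpha_j|^2\asymp 1/(\beta(n-j))$ over this range is exactly what produces the $\asymp\log x$ (consistent with the known logarithmic variance of the $\operatorname{Sine}_\beta$ count). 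Since the Doob increments must sum to the total variance, $w_j$ cannot vanish there. Conversely, the last $\sim n/x$ steps — the ones you declare to be the only active ones — are where $\mathbb{E}|\alpha_j|^2\asymp 1/(\beta(n-j))$ is largest and would by themselves contribute $\asymp\beta^{-1}(\log n-\log x)$ if unsuppressed, ruining uniformity in $n$. The entire difficulty of the proposition is to show that on these late steps the relative phase has locked onto $2\pi\mathbb Z$ (the remaining drift $(n-j)x/n$ being $\lesssim 1$), so that their contribution stays $O(\log(2+x))$; equivalently, in the carousel picture, that the relative-phase diffusion is trapped near $2\pi\mathbb Z$ after time $\asymp\beta^{-1}\log x$, so that the variance integral over the infinite time horizon converges. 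Your proposal asserts the opposite localization and offers no argument for this locking/trapping estimate, which is precisely the technical heart; without it the argument only yields $O(\log n)$ rather than $O(\log(2+x))$ uniformly in $n$.
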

\begin{proposition}\label{estimatealmostsureCbeta}
Under the previous assumptions, 
for all $\alpha > 1/3$, there exists
 a random
variable $C > 0$,
stochastically dominated by a finite random variable depending only on
$\alpha$ and $\beta$, such that almost surely, for all $x \geq 0$,
$$|\operatorname{Card} (L \cap [0,x])
- x/2 \pi| \leq C (1+x)^{\alpha},$$
and
$$|\operatorname{Card} (L \cap [-x,0])
- x/2 \pi| \leq C (1+x)^{\alpha}.$$
\end{proposition}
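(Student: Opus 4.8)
The plan is to upgrade the $O(\log)$ variance bound of Proposition~\ref{estimatequadraticCbeta} to the almost-sure power bound by a quantitative Borel--Cantelli argument, using monotonicity of the counting function to reduce from the continuum of all $x\ge 0$ to a sparse family of control points. Write $D(x):=\operatorname{Card}(L\cap[0,x])-x/(2\pi)$; the estimate for $\operatorname{Card}(L\cap[-x,0])$ is proved identically and will not be discussed further. Fix $\alpha>1/3$ and partition $[1,\infty)$ into dyadic blocks $I_n=[2^n,2^{n+1}]$, $n\ge 0$. Inside $I_n$, introduce $M_n:=\lceil 2^{n(1-\alpha)}\rceil$ equally spaced points $x_{n,0}<\dots<x_{n,M_n}$, so that consecutive ones are at distance $\ell_n=2^n/M_n\le 2^{n\alpha}$. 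Since $x\mapsto\operatorname{Card}(L\cap[0,x])$ is non-decreasing while $x\mapsto x/(2\pi)$ grows by at most $\ell_n/(2\pi)$ over a gap, every $x\in[x_{n,j},x_{n,j+1}]$ satisfies
$$|D(x)|\ \le\ \max\bigl(|D(x_{n,j})|,\,|D(x_{n,j+1})|\bigr)+\frac{\ell_n}{2\pi}\ \le\ \max\bigl(|D(x_{n,j})|,\,|D(x_{n,j+1})|\bigr)+2^{n\alpha},$$
so it suffices to control $|D|$ at the $M_n+1$ control points of each block.

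Next, by Chebyshev's inequality and Proposition~\ref{estimatequadraticCbeta} there is $C=C(\beta)$ with
$$\mathbb{P}\bigl(|D(x_{n,j})|>t\,2^{n\alpha}\bigr)\ \le\ \frac{C\log(2+2^{n+1})}{t^{2}\,2^{2n\alpha}}\ \le\ \frac{C'(\beta)\,(n+1)}{t^{2}\,2^{2n\alpha}}\qquad(t\ge 1).$$
Summing over the $M_n+1=O(2^{n(1-\alpha)})$ control points in $I_n$ and then over $n$,
$$\sum_{n\ge 0}\ \sum_{j=0}^{M_n}\mathbb{P}\bigl(|D(x_{n,j})|>t\,2^{n\alpha}\bigr)\ \le\ \frac{C''(\beta)}{t^{2}}\sum_{n\ge 0}(n+1)\,2^{n(1-3\alpha)}\ =\ \frac{C'''(\alpha,\beta)}{t^{2}},$$
the series converging precisely because $1-3\alpha<0$. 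Combining this with the previous display (absorbing the $+2^{n\alpha}$ term by replacing $t$ with $t-1$, then using $2^{n\alpha}\le(1+x)^\alpha$ for $x\in I_n$), and handling $x\in[0,1]$ separately via $|D(x)|\le\operatorname{Card}(L\cap[0,1])+1$ together with Proposition~\ref{estimatequadraticCbeta}, we get, for all large $t$,
$$\mathbb{P}\Bigl(\sup_{x\ge 0}\frac{|D(x)|}{(1+x)^{\alpha}}>t\Bigr)\ \le\ \frac{C''''(\alpha,\beta)}{t^{2}}.$$
Hence $C:=\sup_{x\ge 0}|D(x)|(1+x)^{-\alpha}$ is almost surely finite, realizes the desired bound, and has a tail dominated by a function of $\alpha$ and $\beta$ alone --- uniform in $n$, since the constant of Proposition~\ref{estimatequadraticCbeta} is --- so $C$ is stochastically dominated by a fixed finite random variable depending only on $\alpha$ and $\beta$, which is the assertion.

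The main obstacle is reaching the full range $\alpha>1/3$: a crude Borel--Cantelli over the integers using only a logarithmic variance bound yields convergence of $\sum_k(\log k)/k^{2\alpha}$, hence only $\alpha>1/2$. The gain comes from the densification step, where the number $\asymp 2^{n(1-\alpha)}$ of control points needed in $I_n$ is balanced against the Chebyshev tail $\asymp (n+1)/2^{2n\alpha}$, the product summing over dyadic scales exactly when $\alpha>1/3$; the requirement that the control-point spacing be $\le 2^{n\alpha}$, so that the monotonicity error does not swamp the bound, is precisely what couples the two exponents. The remaining care is bookkeeping: tracing every constant in the tail estimate back to $\beta$ (and $\alpha$) alone, so that the same dominating variable works under each $\mathbb{Q}_{n,\beta}$ and under $\mathbb{Q}_\beta$, and checking the (routine) measurability of the supremum defining $C$.
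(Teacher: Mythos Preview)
Your argument is correct. The dyadic-block densification with $M_n\asymp 2^{n(1-\alpha)}$ control points per block, the monotonicity sandwich to pass from the grid to all $x$, and the Chebyshev/union-bound computation $\sum_n (n+1)2^{n(1-3\alpha)}<\infty$ are all sound, and they identify $\alpha>1/3$ as the exact threshold reachable from the $O(\log)$ variance input of Proposition~\ref{estimatequadraticCbeta}. The tail bound $\mathbb{P}(C>t)=O_{\alpha,\beta}(t^{-2})$ you obtain gives the required stochastic domination uniformly in~$n$.

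The paper, however, does not prove Proposition~\ref{estimatealmostsureCbeta} at all: both it and Proposition~\ref{estimatequadraticCbeta} are quoted as results ``proven in \cite{NV19}'', the companion paper. So there is no in-paper argument to compare against; what you have supplied is a self-contained derivation of the almost-sure estimate from the quadratic one, which the present paper simply imports as a black box. Your reduction is the standard and natural one, and it has the virtue of making transparent why $1/3$ appears (balance of grid density against Chebyshev tail), matching the paper's remark that ``we expect that it remains true for any $\alpha>0$'' --- a sharper input than second moments would be needed to go below $1/3$.
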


\begin{remark}
The periodicity of $L$ implies that
$|\operatorname{Card} (L \cap [0,x])
- x/2 \pi|$ is  almost surely bounded when $x$ varies.
Hence, the result above becomes trivial if one allows
$C$ to depend on $n$.
Moreover, we expect that it remains true for any $\alpha > 0$, and not only for $\alpha > 1/3$.
\end{remark}

{\it Proof of Theorem \ref{betainfinite}. }
Let $\Pi$ be a probability measure which satisfies the assumptions of Theorem \ref{betainfinite}, and for $n \geq 1$, let $\Pi_n$ be a measure satisfying the assumptions
of Theorem \ref{betafinite}, for the same value of $\beta$. We also assume that the law of $h$ is the same under $\Pi_n$ and under $\Pi$ (note that $\Pi_n$ and $\Pi$ are uniquely determined by this
law). By the discussion preceding the statement of Theorem \ref{betainfinite}, it is possible, by using a unique family
$(g_j)_{j \in \mathbb{Z}}$ of i.i.d. gamma variables with parameter $\beta/2$, to construct some random sequences $(\gamma_j)_{j \in \mathbb{Z}}$
and $(\gamma^{n}_j)_{j \in \mathbb{Z}}$ (for all $n \geq 1$) and an independent real-valued random variable $h$, such that the following
holds:
\begin{itemize}
\item $((\gamma_j)_{j \in \mathbb{Z}}, h)$ follows the law $\Pi$.
\item For all $n \geq 1$, $((\gamma^n_j)_{j \in \mathbb{Z}}, h)$ follows the law $\Pi_n$.
\item For all $j \in \mathbb{Z}$, $\gamma^n_j$ tends almost surely to $\gamma_j$ when $n$ goes to infinity.
\end{itemize}
Now, for all $n \geq 1$, let $L_n$ be a point process following the distribution $\mathbb{Q}_{n, \beta}$, and let $L$ be a point process
following $\mathbb{Q}_{\beta}$. We already know that $L_n \in \mathcal{L}$ almost surely. 
From Proposition \ref{estimatealmostsureCbeta} under
$\mathbb{Q}_{\beta}$, we immediately deduce the
weaker estimates
$\operatorname{Card} (L \cap [0,x]) = x/2 \pi + O(x/\log^2 x)$ and
 $\operatorname{Card} (L \cap [-x,0]) = x/2 \pi + O(x/\log^2 x)$ for $x$ going to infinity, which means that $L \in \mathcal{L}$ almost surely: $\mathbb{Q}_{\beta}$ is
 carried by $\mathcal{L}$.

 Moreover, by \cite{bib:KSt09}, the measure $\mathbb{Q}_{n,\beta}$ tends to $\mathbb{Q}_{\beta}$ when $n$ goes to infinity, in the following
 sense: for all functions $f$ from $\mathbb{R}$ to $\mathbb{R}_+$, $C^{\infty}$ and compactly supported, one has
 \begin{equation}
 \sum_{x \in L_n} f(x) \underset{n \rightarrow \infty}{\longrightarrow} \sum_{x \in L} f(x) \label{weakconvergence}
 \end{equation}
 in distribution. By the Skorokhod representation theorem, one can assume that the convergence \eqref{weakconvergence} holds almost surely, and
one can also suppose that $(L_n)_{n \geq 1}$ and $L$ are independent of $(\gamma^n_j)_{n \geq 1, j \in \mathbb{Z}}$, $(\gamma_j)_{j \in \mathbb{Z}}$ and $h$.

For $n \geq 1$, let $(\lambda^n_j)_{j \in \mathbb{Z}}$ be the strictly increasing sequence containing each point of $L_n$, $\lambda^n_0$ being the smallest
nonnegative point, and let $(\lambda_j)_{j \in \mathbb{Z}}$ be the similar sequence associated to $L$. One can check that the convergence \eqref{weakconvergence} and the
fact that $\mathbb{P} [0 \in L] = 0$ imply that for all $j \in \mathbb{Z}$, $\lambda^n_j$ converges almost surely to $\lambda_j$ when $n$ goes to infinity.
Now, for all $c > 0$, $z \in \mathbb{C} \backslash \left(L \cup \left(\bigcup_{n \geq 1} L_n \right) \right)$, let us take the following notation:
$$S_{n,c}(z) := \sum_{j \in \mathbb{Z}} \, \frac{\gamma^n_j}{\lambda^n_j - z} \, \mathds{1}_{|\lambda^n_j| \leq c}, \; \;
S_{c}(z) := \sum_{j \in \mathbb{Z}} \, \frac{\gamma_j}{\lambda_j - z} \, \mathds{1}_{|\lambda_j| \leq c},$$
$$S_N(z) := \lim_{c \rightarrow \infty} S_{N,c} (z), \; \; S(z) := \lim_{c \rightarrow \infty} S_c (z).$$
Almost surely, all the points of $L$ and $L_n$ ($n \geq 1$) are irrational. If this event occurs, then for all $c \in \mathbb{Q}^*_+$, there exists almost surely a finite interval (possibly empty)
$I_c$ such that $|\lambda_j| \leq c$ if and only if $j \in I_c$, and for all $n \geq 1$ large enough, $|\lambda^n_j| \leq c$ if and only if $j \in I_c$. Hence, for all
$c \in \mathbb{Q}^*_+$, $z \in \mathbb{Q}$, one has almost surely
$$S_{n,c}(z) = \sum_{j \in I_c}  \frac{\gamma^n_j}{\lambda^n_j - z}, \; \;
S_{c}(z) := \sum_{j \in I_c} \, \frac{\gamma_j}{\lambda_j - z},$$
if $n$ is large enough. Since $I_c$ is finite, $\gamma^n_j$ tends a.s. to $\gamma_j$, and $\lambda^n_j$ tends a.s. to $\lambda_j$ when $n$ goes to infinity, one deduces that
almost surely, for all $c \in \mathbb{Q}^*_+$, $z \in \mathbb{Q}$,
\begin{equation}
S_{n,c}(z) \underset{n \rightarrow \infty}{\longrightarrow} S_c(z). \label{asconv}
\end{equation}
One the other hand, by \eqref{int}, and by the fact that $c$ and $-c$ are a.s. not in $L$ or in $L_n$, one deduces that almost surely, for all $c \in \mathbb{Q}^*_+$,
 $z \in \mathbb{Q}$ such that $c > 2|z| \vee 1$, and for all $n \geq 1$,
\begin{align*}
S_n (z) - S_{n,c} (z) &  =   \int_{c}^{\infty} \frac{ \Lambda_n([c,  \mu]) - \Lambda_n([-\mu, - c])}{\mu^2} \, d \mu
\\  & +  \int_{c}^{\infty} \left( \frac{ (2 z \mu - z^2) (\Lambda_n([c, \mu]) )}{\mu^2 (\mu-z)^2}
+ \frac{(2 z \mu + z^2) (\Lambda_n([-\mu, -c]) )}{\mu^2 (\mu+z)^2}
 \right) \, d\mu
 \end{align*}
 and
 \begin{align*}
S (z) - S_{c} (z) &  =   \int_{c}^{\infty} \frac{ \Lambda([c,  \mu]) - \Lambda([-\mu, - c])}{\mu^2} \, d \mu
\\  & +  \int_{c}^{\infty} \left( \frac{ (2 z \mu - z^2) (\Lambda([c, \mu]) )}{\mu^2 (\mu-z)^2}
+ \frac{(2 z \mu + z^2) (\Lambda([-\mu, -c]) )}{\mu^2 (\mu+z)^2}
 \right) \, d\mu,
 \end{align*}
where $\Lambda_n:= \sum_{j \in \mathbb{Z}} \gamma^n_j \delta_{\lambda^n_j}$ and $\Lambda := \sum_{j \in \mathbb{Z}} \gamma_j \delta_{\lambda_j}$.
If for any bounded interval $I$, one defines $\Lambda^{(0)}_n (I) := \Lambda_n(I) - \mathbb{E} [ \Lambda_n(I)] $ and
$\Lambda^{(0)} (I) := \Lambda (I) - \mathbb{E} [ \Lambda(I)] $, one has  by \eqref{20z}, the triangle inequality, and the fact that $ \mathbb{E} [ \Lambda_n(I)] $ is
proportional to the Lebesgue measure on $I$:
\begin{align*}
|S_n(z) - S_{n,c}(z)| & \leq \int_c^{\infty} \frac{ |\Lambda^{(0)}_n ([c, \mu])| +   |\Lambda^{(0)}_n ([-\mu, -c])|}{\mu^2} \, d \mu
\\ & + \int_c^{\infty} \frac{20|z| \, d \mu}{\mu^3} \, \left[ C_1 \mu +  |\Lambda^{(0)}_n ([c, \mu])| +   |\Lambda^{(0)}_n ([-\mu, -c])|\right] \\
 & \leq C_2 (1+|z|) \left( \frac{1}{c} +  \int_c^{\infty} \frac{ |\Lambda^{(0)}_n ([c, \mu])| +   |\Lambda^{(0)}_n ([-\mu, -c])|}{\mu^2} \, d \mu \right),
\end{align*}
where $C_1, C_2 > 0$ are universal constants. Since the distribution of $L_n$ is invariant by translation (recall that its points are the rescaled arguments of
the circular beta  ensemble on the unit circle), one has $$\mathbb{E}[ |\Lambda^{(0)}_n ([c, \mu])|] = \mathbb{E}[ |\Lambda^{(0)}_n ([-\mu, -c])|]=
\mathbb{E}[ |\Lambda^{(0)}_n ([0, \mu-c])|]$$
and
$$\mathbb{E}[|S_n(z) - S_{n,c}(z)|] \leq C_3  (1+|z|)  \left( \frac{1}{c} + \int_0^{\infty} \frac{ \mathbb{E}[ |\Lambda^{(0)}_n ([0, \nu])| ]}{(\nu + c)^2} \, d \nu \right),$$
where $C_3 > 0$ is a universal constant. Similarly,
$$\mathbb{E}[|S(z) - S_{c}(z)|] \leq C_3  (1+|z|)  \left( \frac{1}{c} + \int_0^{\infty} \frac{ \mathbb{E}[ |\Lambda^{(0)} ([0, \nu])| ]}{(\nu + c)^2} \, d \nu \right).$$
Now, from Proposition \ref{estimatequadraticCbeta} under $\mathbb{Q}_{n, \beta}$ and $\mathbb{Q}_{\beta}$, one immediately deduces that
\begin{equation}
\int_0^{\infty} \frac{ \mathbb{E}[ |\Lambda^{(0)} ([0, \nu])| ]+ \sup_{n \geq 1} \mathbb{E}[ |\Lambda_n^{(0)} ([0, \nu])| ]}{(1+\nu)^2} \, d\nu < \infty. \label{domination}
\end{equation}
Hence, by dominated convergence, there exists a function $\phi$ from $[1, \infty)$ to $\mathbb{R}^*_+$, tending to zero at infinity, such that
$$\mathbb{E}[|S_n(z) - S_{n,c}(z)|] \leq (1+|z|) \, \phi(c)$$
and
$$\mathbb{E}[|S(z) - S_{c}(z)|] \leq (1+|z|) \, \phi(c).$$
We deduce that for all $c \in \mathbb{Q}^*_+$,
 $z \in \mathbb{Q}$ such that $c > 2|z| \vee 1$, $n \geq 1$ and $\epsilon> 0$,
 \begin{align*}
 \mathbb{P} [ |S(z) - S_{n} (z)| \geq \epsilon] & \leq
 \mathbb{P} [ |S_c(z) - S_{n,c}(z) | \geq \epsilon/3 ]
 +  \mathbb{P} [ |S(z) - S_{c}(z) | \geq \epsilon/3 ] \\ & + \mathbb{P} [ |S_n(z) - S_{n,c}(z)| \geq \epsilon/3 ]
\\ &  \leq  \mathbb{P} [ |S_c(z) - S_{n,c}(z) | \geq \epsilon/3 ] + \frac{6}{\epsilon} \, (1+|z|) \, \phi(c).
 \end{align*}
By the almost sure convergence \eqref{asconv}, which implies the corresponding convergence in probability, one deduces
$$\underset{n \rightarrow \infty}{\lim  \sup}  \; \mathbb{P} [ |S(z) - S_{n} (z)| \geq \epsilon]
\leq  \frac{6}{\epsilon} \, (1+|z|) \, \phi(c).$$
Now, by taking $z \in \mathbb{Q}$ fixed, $c \in \mathbb{Q}$ going to infinity and then $\epsilon \rightarrow 0$, one deduces that for all $z \in \mathbb{Q}$,
$$S_n(z) \underset{n \rightarrow \infty}{\longrightarrow} S(z)$$
in probability. By considering diagonal extraction of subsequences, one deduces that there exists a strictly increasing sequence $(n_k)_{k \geq 1}$ of integers,
such that almost surely,
\begin{equation}
S_{n_k} (z)  \underset{k \rightarrow \infty}{\longrightarrow} S(z) \label{asconv2}
\end{equation}
for all $z \in \mathbb{Q}$.

Now, for all $j \in \mathbb{Z}$, $n \geq 1$, let $\mu^n_j$ (resp. $\mu_j$) be the unique point of $\mathcal{D} (L_n, (\gamma^n_j)_{j \in \mathbb{Z}}, h)$
(resp. $\mathcal{D} (L, (\gamma_j)_{j \in \mathbb{Z}}, h)$) which lies in the interval $(\lambda^n_j, \lambda^n_{j+1})$
(resp. $(\lambda_j, \lambda_{j+1})$). Let us fix $j \in \mathbb{Z}$, $\epsilon > 0$, and let us consider two random rational numbers $q_1$ and $q_2$ such that
almost surely,
$$(\mu_j - \epsilon) \vee \lambda_j < q_1 < \mu_j < q_2 < (\mu_j + \epsilon) \wedge \lambda_{j+1},$$
which implies that
$$S(q_1) < h < S(q_2).$$
By \eqref{asconv2}, one deduces that almost surely, for $k$ large enough,
$$S_{n_k}(q_1) < h < S_{n_k}(q_2),$$
which implies that $\mathcal{D} (L_{n_k}, (\gamma^{n_k}_j)_{j \in \mathbb{Z}}, h)$ has at least one point in the interval $(q_1,q_2)$.
On the other hand, since $\lambda^n_j$ (resp.  $\lambda^n_{j+1}$) tends a.s. to $\lambda_j$ (resp. $\lambda_{j+1}$) when $n$ goes to infinity,
one has almost surely, for $k$ large enough,
$$\lambda^{n_k}_j < q_1 < q_2< \lambda^{n_k}_{j+1}.$$
Hence, $\mathcal{D} (L_{n_k}, (\gamma^{n_k}_j)_{j \in \mathbb{Z}}, h)$ has exactly one point in $(q_1, q_2)$, and this point is necessarily $\mu^{n_k}_j$. One deduces that almost surely,
$|\mu^{n_k}_j - \mu_j| \leq \epsilon$ for $k$ large enough, which implies, by taking $\epsilon \rightarrow 0$, that $\mu^{n_k}_j$ converges
almost surely to $\mu_j$ when $k$ goes to infinity.

Now, let $f$ be a function from $\mathbb{R}$ to $\mathbb{R}_+$, $C^{\infty}$ and compactly supported. Since $L$ is locally finite, there
exists a.s. an integer $j_0 \geq 1$ such that the support of $f$ is included in $(\lambda_{-j_0}, \lambda_{j_0})$, and
then in $(\lambda^{n_k}_{-j_0}, \lambda^{n_k}_{j_0})$ for $k$ large enough, which implies that $f(\mu^{n_k}_j) = f(\mu_j) = 0$ for $|j| > j_0$.
Hence, a.s., there exists $j_0, k_0 \geq 1$, such that for $k \geq k_0$,
$$\sum_{j \in \mathbb{Z}} f(\mu^{n_k}_j) = \sum_{|j| \leq j_0}  f(\mu^{n_k}_j)$$
and
$$\sum_{j \in \mathbb{Z}} f(\mu_j) = \sum_{|j| \leq j_0}  f(\mu_j),$$
which implies that
\begin{equation}
\sum_{j \in \mathbb{Z}} f(\mu^{n_k}_j) \underset{k \rightarrow \infty}{\longrightarrow} \sum_{j \in \mathbb{Z}} f(\mu_j), \label{asconv3}
\end{equation}
since $f(\mu_j^{n_k})$ tends to $f(\mu_j)$ for each $j \in \{-j_0, -j_0+1, \dots, j_0\}$.

The almost sure convergence \eqref{asconv3} holds a fortiori in distribution, which implies
 that the law of $\mathcal{D} (L_{n_k}, (\gamma^{n_k}_j)_{j \in \mathbb{Z}}, h)$ tends to the law of $\mathcal{D} (L, (\gamma_j)_{j \in \mathbb{Z}}, h)$.
On the other hand, by Theorem \ref{betafinite},  $\mathcal{D} (L_{n_k}, (\gamma^{n_k}_j)_{j \in \mathbb{Z}}, h)$ has distribution $\mathbb{Q}_{n_k, \beta}$,
and then $\mathcal{D} (L, (\gamma_j)_{j \in \mathbb{Z}}, h)$ follows the limit of the distribution $\mathbb{Q}_{n_k, \beta}$ for $k$ tending to infinity,
i.e. $\mathbb{Q}_{\beta}$. Theorem \ref{betainfinite} is then proven.

\hfill $\square$

\section{Properties of continuity for the Stieltjes
Markov chain}

In the previous section, we have deduced the convergence of the Markov mechanism associated to $\mathbb{Q}_{n_k, \beta}$
towards the one corresponding to $\mathbb{Q}_{\beta}$ from the convergence of $\mathbb{Q}_{n_k, \beta}$ to
$\mathbb{Q}_{\beta}$ itself, and the convergence of the associated weights.
Later in the paper, we will prove similar results related to the Gaussian ensembles, for which the situation
is more difficult to handle, in particular because of the lack of symmetry of the G$\beta$E at the macroscopic scale, when we rescale around a non-zero point of the bulk. Moreover, we will have to consider several steps of the Markov mechanism at the same time.
 That is why we will need a more general result, giving a property of continuity of the Markov mechanism described above, with respect to its initial data. 

The main results of the present paper concern convergence in distribution of point processes. In this section, we will
assume properties of strong convergence, which can be done with the help of Skorokhod's representation theorem.

The notion of convergence of holomorphic functions usually considered is the uniform convergence on compact sets.
This notion cannot be directly applied to the meromorphic functions involved here, because of the poles on the real line.
That is why we will need an appropriate notion of  uniform convergence of meromorphic functions.

More precisely, we say that a sequence $(f_n)_{n \geq 1}$ of meromorphic functions
on an open set $U \subset \mathbb{C}$  converges uniformly
 to a function $f$ from $U$ to the Riemann sphere $\mathbb{C} \cup  \{\infty\}$ if and only if this  convergence holds for the distance $d$ on $\mathbb{C} \cup  \{\infty\}$,
given by $$d(z_1, z_2) = \frac{|z_2 - z_1|}{ \sqrt{(1+ |z_1|^2)(1+|z_2|^2)}}$$
for $z_1, z_2 \neq \infty$, and extended by continuity at $\infty$ ($d$ corresponds to the
distance of the points on the euclidian sphere, obtained via the inverse stereographic projection).
It is a classical result that the limiting function $f$ should be meromorphic on $U$. One deduces the
following: if a sequence $(f_n)_{n \geq 1}$ of meromorphic functions on $\mathbb{C}$
converges to a function $f$ from $\mathbb{C}$ to $\mathbb{C} \cup  \{\infty\}$, uniformly
on all bounded subsets of $\mathbb{C}$, then $f$ is meromorphic on $\mathbb{C}$.
Morover, the following lemma will be useful:
\begin{lemma} \label{lemmameromorphic}
 Let $(f_n)_{n \geq 1}$ (resp. $(g_n)_{n \geq 1}$) be a sequence of meromorphic functions on an
 open set $U$, uniformly convergent (for the distance $d$) to a function $f$ (resp. $g$),
 necessarily meromorphic. We assume that none of these functions is identically $\infty$
on a connected component of $U$, and that $f$ and $g$ have no common pole.
Then the sequence $(f_n + g_n)_{n \geq 1}$ of meromorphic functions tends uniformly to $f+g$ on all the compact sets of $U$.
\end{lemma}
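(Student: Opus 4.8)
The plan is to reduce the claim to a local statement and then distinguish two cases according to whether the limit has a pole. Two elementary facts about the spherical metric $d$ will be used repeatedly. First, the inversion $w\mapsto 1/w$ (with $1/0=\infty$, $1/\infty=0$) is an \emph{isometry} of $\mathbb{C}\cup\{\infty\}$ for $d$, as one checks directly from the displayed formula for $d$. Second, on a compact set $K$ on which a continuous limit $\phi$ takes values in a bounded part of $\mathbb C$, the inequalities $d(w_1,w_2)\le |w_1-w_2|$ for all $w_i$ and $d(w_1,w_2)\ge |w_1-w_2|/(1+R^2)$ for $|w_1|,|w_2|\le R$ show that $d$-uniform convergence on $K$ to $\phi$ is \emph{equivalent} to Euclidean-uniform convergence, and in addition forces the approximating meromorphic functions to omit the value $\infty$ on $K$ from some rank on, hence to be holomorphic there. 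Since every compact $K\subset U$ is covered by finitely many small closed disks, and since a finite maximum of null sequences is null, it suffices to prove: every $p\in U$ has a closed disk $\bar D\subset U$ centered at $p$ on which $d(f_n+g_n,f+g)\to 0$ uniformly (we use the hypothesis restricted to the compact $\bar D$). Because $f_n,g_n,f,g$ are not identically $\infty$ on any connected component of $U$, the limits $f$ and $g$ are genuine meromorphic functions, so at $p$ each of $f,g$ is either holomorphic or has an isolated pole; by hypothesis they cannot both have a pole at $p$.

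\emph{Case 1: neither $f$ nor $g$ has a pole at $p$.} Shrink $\bar D$ so that $f$ and $g$ are holomorphic, hence bounded, on $\bar D$. By the second fact, $f_n\to f$ and $g_n\to g$ uniformly on $\bar D$ for the Euclidean metric, and both sequences consist of functions holomorphic on $\bar D$ from some rank on; therefore $f_n+g_n\to f+g$ uniformly on $\bar D$ for the Euclidean metric, and since the limit is bounded this convergence also holds for $d$.

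\emph{Case 2: $f$ has a pole at $p$} (the case where $g$ does is symmetric), so $g$ is holomorphic at $p$. Here the naive argument fails, because $d$-convergence does not survive addition once the limit reaches $\infty$ (think of $f_n=n$, $g_n=-n$), so I pass to reciprocals. Shrink $\bar D$ so that: $f$ has its only pole in $\bar D$ at $p$ and has no zero in $\bar D$ (the zeros and poles of $f$ are isolated and $f(p)=\infty$); $g$ is holomorphic on $\bar D$; and, writing $h:=1/f$, which is then holomorphic on $\bar D$, the function $1+gh$ has no zero on $\bar D$ (it is holomorphic and equals $1$ at $p$). On $\bar D$ put $h_n:=1/f_n$. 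By the isometry property, $d(h_n,h)=d(f_n,f)\to 0$ uniformly on $\bar D$; since $h$ is bounded, the second fact gives $h_n\to h$ uniformly on $\bar D$ for the Euclidean metric with the $h_n$ holomorphic on $\bar D$ from some rank on, and likewise $g_n\to g$ uniformly and holomorphically. Hence $1+g_nh_n\to 1+gh$ uniformly on $\bar D$ with $|1+g_nh_n|$ bounded below (by $\tfrac12\inf_{\bar D}|1+gh|>0$) from some rank on, and therefore, as an identity between meromorphic functions on $\bar D$,
$$\frac{1}{f_n+g_n}=\frac{h_n}{1+g_nh_n}\ \longrightarrow\ \frac{h}{1+gh}=\frac{1}{f+g}$$
uniformly on $\bar D$ for the Euclidean metric; the limit $1/(f+g)$ is bounded (holomorphic on $\bar D$), so this also holds for $d$, and applying the isometry property once more yields $d(f_n+g_n,f+g)=d\bigl(1/(f_n+g_n),\,1/(f+g)\bigr)\to 0$ uniformly on $\bar D$.

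The only genuinely delicate point is the one in Case 2: the spherical metric behaves well under inversion but not under addition, so near a pole of the limit one must work with $1/(f_n+g_n)$, and the whole purpose of the no-common-pole hypothesis and of the "not identically $\infty$" hypothesis is to ensure that locally exactly one of $f,g$ blows up and that $1/(f+g)$ is a bona fide holomorphic function with denominator bounded away from $0$. Everything else — the finitely many shrinkings of $\bar D$, the equivalence of the metrics on bounded sets, and the stability of uniform convergence under sums, products and quotients by sequences bounded below — is routine.
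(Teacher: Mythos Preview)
Your proof is correct and takes a genuinely different route from the paper's.

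The paper works directly with the spherical metric: given a compact $K$, it isolates small neighbourhoods $V$ (resp.\ $W$) of the poles of $f$ (resp.\ $g$) in $K$ on which $|f|$ dominates $2|g|+1$ and is at least $2A+1$ (resp.\ symmetrically). Uniform $d$-convergence forces the same inequalities for $f_n,g_n$ from some rank, so $|f_n+g_n|\ge |f_n|/2\ge A$ and $|f+g|\ge A$ on $V\cup W$; since any two points of modulus $\ge A$ are at $d$-distance $\le 2/A$, this gives $d(f_n+g_n,f+g)\le 2/A$ there. On the remaining compact $L=K\setminus(V\cup W)$ the limits are bounded, so Euclidean convergence does the job; letting $A\to\infty$ finishes.

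Your approach is instead to localise to small disks and, near a pole of exactly one of $f,g$, exploit that $w\mapsto 1/w$ is a $d$-isometry: passing to $h_n=1/f_n$, you reduce to the algebraic identity $1/(f_n+g_n)=h_n/(1+g_nh_n)$ between bounded holomorphic functions and handle everything by Euclidean estimates. This is a clean, conceptual reduction that makes transparent why the ``no common pole'' hypothesis matters (it guarantees $1+gh\neq 0$ near the pole). The paper's argument, by contrast, avoids the algebraic identity and works directly with the crude bound $d(w,\infty)\le 1/|w|$; it is shorter to write but less structural. Both proofs ultimately rest on the same dichotomy: away from poles, $d$ and $|\cdot|$ are locally equivalent; near a pole of the limit, one needs some extra trick, and the two proofs choose different ones.
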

\begin{proof}
Let $K$ be a compact subset of $U$, let $z_1, z_2, \dots, z_p$ be the poles of $f$ in $K$, and
$z'_1, z'_2, \dots, z'_q$ the poles of $g$ in $K$. There exists a neighborhood
$V$ of $\{z_1,z_2, \dots, z_p\}$ containing no pole of $g$, and a neighborhood
$W$ of $\{z'_1,z'_2, \dots, z'_p\}$ containing no pole of $f$. If $A > 0$ is
fixed, one can assume the following (by restricting $V$ and $W$ if it is needed):
\begin{itemize}
\item The infimum of $|f|$ on $V$ is larger than $2A+1$ and also larger than
 the supremum of $2|g|+1$ on $V$.
 \item The infimum of $|g|$ on $W$ is larger than $2A+1$ and also larger than
 the supremum of $2|f|+1$ on $W$.
\end{itemize}
By the assumption of uniform convergence, we deduce, for $n$ large enough:
\begin{itemize}
\item The infimum of $|f_n|$ on $V$ is larger than $2A$ and also larger than
 the supremum of $2|g_n|$ on $V$.
 \item The infimum of $|g_n|$ on $W$ is larger than $2A$ and also larger than
 the supremum of $2|f_n|$ on $W$.
\end{itemize}
Now, for all $z \in V$ and $n$ large enough, one has
$$|f_n(z) + g_n(z)| \geq |f_n(z)| - |g_n(z)| \geq |f_n(z)| - \frac{ |f_n(z)|}{2}
= \frac{ |f_n(z)|}{2} \geq A.$$
and also
$$|f(z) + g(z)| \geq A,$$
which implies
$$d(f_n(z) + g_n(z), f(z) + g(z)) \leq 2/A.$$
Similarly, this inequality is true for $z \in W$.
Moreover, there exists a compact set $L \subset K$, containing no pole of $f$ or $g$,
and such that $K$ is included in $L \cup V \cup W$.
Since the meromorphic functions $f$ and $g$ have no pole on the compact set $L$,
they are bounded on this set. Since $(f_n)_{n \geq 1}$ (resp. $(g_n)_{n \geq 1}$) converges to $f$ (resp. $g$) on $L$, uniformly for the distance $d$, and $(f_n)_{n \geq 1}$ (resp. $(g_n)_{n \geq 1}$)
is uniformly bounded, the uniform convergence holds in fact for the usual distance.
Hence, $(f_n + g_n)_{n \geq 1}$ tends uniformly to $f+g$ on $L$ for the usual distance, and a
fortiori for $d$: by using the previous bounded obtained in $V$ and $W$, one deduces,
since $L$, $V$ and $W$ cover $K$:
$$\underset{n \rightarrow \infty} {\lim \sup} \, \underset{z \in K}{\sup}
\, d(f_n(z) + g_n(z), f(z) + g(z)) \leq 2/A.$$
Since we can choose $A > 0$ arbitrarily, we are done.
\end{proof}
From this lemma, we deduce the following statement
\begin{lemma} \label{lemmarational}
Let $p \geq 1$, and let $(\lambda_k)_{1 \leq k \leq p}$, $(\lambda_{n,k})_{n \geq 1, 1 \leq k \leq p}$, $(\gamma_k)_{1 \leq k \leq p}$, $(\gamma_{n,k})_{n \geq 1, 1 \leq k \leq p}$ be
some complex numbers such that all the $\lambda_k$'s are distincts, all the
$\gamma_k$'s are nonzero, and for all $k \in \{1, \dots, p\}$,
$$\lambda_{n,k} \underset{n \rightarrow \infty}{\longrightarrow} \lambda_k$$
and
$$\gamma_{n,k} \underset{n \rightarrow \infty}{\longrightarrow} \gamma_k.$$
Then, one has, for $n$ going to infinity, the convergence of the rational function
$$z \mapsto \sum_{k=1}^p \frac{\gamma_{n,k}}{\lambda_{n,k} - z}$$
towards the function
$$z \mapsto \sum_{k=1}^p \frac{\gamma_{k}}{\lambda_{k} - z},$$
uniformly on all the compact sets, for the distance $d$.
\end{lemma}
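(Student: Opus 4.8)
The plan is to argue by induction on $p$, with a single simple fraction as the base case and Lemma~\ref{lemmameromorphic} providing the inductive step.

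\textbf{Base case $p = 1$.} Here one uses two elementary facts about the chordal metric $d$: first, $d(w_1, w_2) \le |w_1 - w_2|$ for $w_1, w_2 \in \mathbb{C}$; second, $d$ is invariant under the inversion $w \mapsto 1/w$ (a one-line computation), so that $d(w_1, w_2) = d(1/w_1, 1/w_2)$ with the convention $1/0 = \infty$, $1/\infty = 0$. For $n$ large enough that $\gamma_{n,1} \ne 0$, the reciprocals $\frac{\lambda_{n,1} - z}{\gamma_{n,1}}$ and $\frac{\lambda_1 - z}{\gamma_1}$ are finite for every $z \in \mathbb{C}$, so
\begin{align*}
d\!\left( \frac{\gamma_{n,1}}{\lambda_{n,1} - z},\ \frac{\gamma_1}{\lambda_1 - z} \right)
&= d\!\left( \frac{\lambda_{n,1} - z}{\gamma_{n,1}},\ \frac{\lambda_1 - z}{\gamma_1} \right)
\le \left| \frac{\lambda_{n,1} - z}{\gamma_{n,1}} - \frac{\lambda_1 - z}{\gamma_1} \right| \\
&= \frac{ | \gamma_1 \lambda_{n,1} - \gamma_{n,1} \lambda_1 | + |z|\, | \gamma_1 - \gamma_{n,1} | }{ |\gamma_{n,1}|\, |\gamma_1| }.
\end{align*}
Since $\gamma_{n,1} \to \gamma_1 \ne 0$ we have $|\gamma_{n,1}| \ge |\gamma_1|/2$ for large $n$, and since $\lambda_{n,1} \to \lambda_1$ and $\gamma_{n,1} \to \gamma_1$, the right-hand side tends to $0$ uniformly for $z$ in any bounded set. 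This settles $p = 1$.

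\textbf{Inductive step.} Assume the statement for $p = m$, and set $f_n(z) = \sum_{k=1}^m \frac{\gamma_{n,k}}{\lambda_{n,k} - z}$, $f(z) = \sum_{k=1}^m \frac{\gamma_k}{\lambda_k - z}$, $g_n(z) = \frac{\gamma_{n,m+1}}{\lambda_{n,m+1} - z}$, $g(z) = \frac{\gamma_{m+1}}{\lambda_{m+1} - z}$. By the induction hypothesis $f_n \to f$ and by the base case $g_n \to g$, both uniformly on compact sets for $d$, and none of these functions is identically $\infty$. Because the $\lambda_k$ are distinct and the $\gamma_k$ nonzero, the residue of $f$ at $\lambda_k$ equals $\gamma_k \ne 0$ for $1 \le k \le m$, so the poles of $f$ are exactly $\lambda_1, \dots, \lambda_m$; the single pole $\lambda_{m+1}$ of $g$ is not among them, hence $f$ and $g$ have no common pole. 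Lemma~\ref{lemmameromorphic}, applied with $U = \mathbb{C}$, then yields that $f_n + g_n = \sum_{k=1}^{m+1} \frac{\gamma_{n,k}}{\lambda_{n,k} - z}$ converges to $f + g = \sum_{k=1}^{m+1} \frac{\gamma_k}{\lambda_k - z}$ uniformly on all compact sets for $d$, completing the induction.

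\textbf{Where the difficulty lies.} The only substantive point is the base case, where convergence must be controlled uniformly \emph{across the pole} $\lambda_1$, at which both functions take the value $\infty$; this is precisely why the chordal metric $d$, rather than the Euclidean one, is the right notion here, and the inversion-invariance of $d$ is exactly what trivializes the estimate. The inductive step is then bookkeeping, the one thing worth verifying being that no cancellation of poles occurs in the partial sums, so that the no-common-pole hypothesis of Lemma~\ref{lemmameromorphic} is genuinely met.
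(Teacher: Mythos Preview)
Your proof is correct and follows the paper's strategy exactly: induction on $p$, with Lemma~\ref{lemmameromorphic} furnishing the inductive step. Your base case is cleaner than the paper's---using the inversion invariance $d(w_1,w_2)=d(1/w_1,1/w_2)$ to turn a pole into a removable point is exactly the right trick, and it replaces the paper's by-hand splitting into the regions $|\lambda_1-z|\le 2\epsilon$ and $|\lambda_1-z|\ge 2\epsilon$.

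One small technical point in the inductive step: you invoke Lemma~\ref{lemmameromorphic} with $U=\mathbb{C}$, but the hypothesis of that lemma, as stated, asks for uniform convergence on $U$, whereas your base case and inductive hypothesis only deliver uniform convergence on bounded sets (your bound grows like $|z|\,|\gamma_1-\gamma_{n,1}|$). The paper is careful here: it applies Lemma~\ref{lemmameromorphic} on the disk $\{|z|<2A\}$, where the convergence \emph{is} uniform (being uniform on the compact closure), to conclude uniform convergence on $\{|z|\le A\}$ for arbitrary $A$. Inserting this one line fixes the quantifiers.
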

\begin{proof}
Let us first prove the result for $p=1$, which is implied by the following
convergence
$$\frac{\gamma_{n,1}}{\lambda_{n,1} - z } \underset{n \rightarrow \infty}{\longrightarrow}
\frac{\gamma_{1}}{\lambda_{1} - z },$$
uniformly on $\mathbb{C}$ for the distance $d$.
Let us fix $\epsilon > 0$. For $n$ large enough, we have
$|\lambda_{n,1} - \lambda_{1}| \leq \epsilon$ and
$|\gamma_{n,1} - \gamma_{1}| \leq |\gamma_1|/2$.
If these conditions are satisfied and if $|\lambda_{1} - z| \leq 2 \epsilon$,
then
$$\left|\frac{\gamma_{1}}{\lambda_{1} - z } \right|  \geq \frac{|\gamma_1|}{2 \epsilon}$$
and
$$\left|\frac{\gamma_{n,1}}{\lambda_{n,1} - z } \right|  \geq \frac{|\gamma_1|}{6 \epsilon},$$
since $|\gamma_{n,1}| \geq |\gamma_1|/2$ and
$$|\lambda_{n,1} - z| \leq |\lambda_{1} - z| + |\lambda_{n,1} - \lambda_{1}|\leq 3 \epsilon.$$
Hence, there exists $n_0 \geq 1$, independent of $z$ satisfying
$|\lambda_{1} - z| \leq 2 \epsilon$, such that for $n \geq n_0$,
$$d\left( \frac{\gamma_{1}}{\lambda_{1} - z },
\frac{\gamma_{n,1}}{\lambda_{n,1} - z } \right)
\leq d\left( \frac{\gamma_{1}}{\lambda_{1} - z },
\infty \right) +
d\left( \infty,
\frac{\gamma_{n,1}}{\lambda_{n,1} - z } \right)
\leq \frac{2 \epsilon}{|\gamma_1|} + \frac{6 \epsilon}{|\gamma_1|}  =  \frac{8 \epsilon}{|\gamma_1|}.$$
Similarly, there exists $n_1 \geq 1$ such that for all $n \geq n_1$ and for
all $z$ satisfying $|\lambda_{1} - z| \geq 2 \epsilon$, one has:
$$|\lambda_{n,1} - z| \geq |\lambda_{1} - z| - |\lambda_{n,1} - \lambda_{1}| \geq \epsilon.$$
This implies:
\begin{align*}
 \left| \frac{\gamma_{1}}{\lambda_{1} - z } -  \frac{\gamma_{n,1}}{\lambda_{n,1} - z } \right|
 & \leq \left| \frac{\gamma_{1} - \gamma_{n,1} }{\lambda_{n,1} - z } \right|
 + |\gamma_1| \, \left| \frac{1}{\lambda_{1} - z} - \frac{1}{\lambda_{n,1} - z} \right|
 \\ & \leq \frac{|\gamma_{1} - \gamma_{n,1} |}{\epsilon}
 + |\gamma_1| \, \frac{|\lambda_{1} - \lambda_{n,1}|}{(2\epsilon)(\epsilon)}.
\end{align*}
Since this quantity does not depend on $z$ and tends to zero at infinity,
we deduce
$$\sup_{z \in \mathbb{C}, |\lambda_{1} - z| \geq 2 \epsilon}
d\left( \frac{\gamma_{1}}{\lambda_{1} - z },
\frac{\gamma_{n,1}}{\lambda_{n,1} - z } \right) \underset{n \rightarrow \infty}
{\longrightarrow} 0.$$
Since we know that
$$\underset{n \rightarrow \infty}{\lim \, \sup}
\sup_{z \in \mathbb{C}, |\lambda_{1} - z| \leq 2 \epsilon}
d\left( \frac{\gamma_{1}}{\lambda_{1} - z },
\frac{\gamma_{n,1}}{\lambda_{n,1} - z } \right)\leq \frac{8 \epsilon}{|\gamma_1|},$$
we get
$$\underset{n \rightarrow \infty}{\lim \, \sup} \,
\sup_{z \in \mathbb{C}} \,
d\left( \frac{\gamma_{1}}{\lambda_{1} - z },
\frac{\gamma_{n,1}}{\lambda_{n,1} - z } \right)\leq \frac{8 \epsilon}{|\gamma_1|}.$$
Now, $\epsilon > 0$ can be arbitrarily chosen, and then the lemma is proven for $p=1$.
For $p \geq 2$, let us deduce the result of the lemma, assuming that it is
satisfied when $p$ is replaced by $p-1$.
We define the meromorphic functions $(f_n)_{n \geq 1}$, $f$, $(g_n)_{n \geq 1}$, $g$
by the formulas:
$$f_n(z) = \sum_{k=1}^{p-1} \frac{\gamma_{n,k}}{\lambda_{n,k} - z},$$
$$f(z) = \sum_{k=1}^{p-1} \frac{\gamma_{k}}{\lambda_{k} - z},$$
$$g_n(z) = \frac{\gamma_{n,p}}{\lambda_{n,p} - z},$$
$$g(z) = \frac{\gamma_{p}}{\lambda_{p} - z}.$$
Let $A > 0$. By the induction hypothesis, we know that $f_n$ converges to $f$ when
$n$ goes to infinity, uniformly on the set $\{z \in \mathbb{C}, |z| < 2A\}$ and for
the distance $d$. Similarly, by the case $p=1$ proven above, $g_n$ converges to $g$, uniformly on the same set (in fact, uniformly on $\mathbb{C}$) and for the same distance. Moreover,
the functions $f$ and $g$ have no common pole, since the numbers $(\lambda_k)_{1 \leq k \leq p}$
are all distinct. We can then apply Lemma \ref{lemmameromorphic} and deduce
that $f_n+ g_n$ converges to $f + g$, uniformly on any compact set of
 $\{z \in \mathbb{C}, |z| < 2A\}$, for example $\{z \in \mathbb{C}, |z| \leq A\}$,
 and for the distance $d$. Since $A > 0$ can be arbitrarily chosen, we are done.

\end{proof}

We have now the ingredients needed to state the main
result of this section. In this theorem, we deal with finite and infinite sequences together. So we will think of $k\mapsto \lambda_k$ as a function from $\mathbb Z\to \mathbb R\cup \{\emptyset\}$, with the convention that summation and other operations are only considered over the values that are different from $\emptyset$. We will also assume that the value $\emptyset$ is taken exactly on the complement of an interval of
$\mathbb Z$.

The statement of the following result is long and
technical, but as we will see in the next section, it
will be adapted to the problem we are interested in.

\begin{theorem} \label{topologytheorem}
Let $(\Xi_n)_{n \geq 1}$ be a sequence of discrete simple point measures on $\mathbb R$ (i.e. sums of
Dirac masses at a locally finite set of points), converging to a simple point measure $\Xi$, locally weakly:
 \begin{equation}\label{e:lambda1}
  \Xi_n \longrightarrow \Xi.
\end{equation}
Let $L_n$ denote the support of $\Xi_n$, and $L$ the support of $\Xi$.
We suppose that there exists $\alpha\in(0,1)$, a family $(\tau_\ell)_{\ell \geq 0}$
of elements of $\mathbb{R}_+^*$, with $\tau_\ell\to 0$ as $\ell\to\infty$, such that for all $n \geq 1$, $\ell \geq 1$, we have
\begin{equation}\label{e:lambdasquared}
  \int_{\mathbb R}\frac{\mathbf 1(|\lambda|>\ell)}{|\lambda|^{1+\alpha}}\,d\Xi_n(\lambda)
  \le \tau_\ell
\end{equation}
Moreover, assume that the limits
\begin{equation}\label{e:lambdasquared2}
h_{n,\ell} = \lim_{\ell'\to\infty}  \int_{\mathbb R}\frac{\mathbf 1(\ell<|\lambda|<\ell')}{\lambda}\,d\Xi_n(\lambda)
\end{equation}
exist, and so does the similar limit $h_\ell$ defined in terms of $\Xi$. Assume further that
for some $h\in \mathbb R$, the
following equalities are well-defined and satisfied:
\begin{equation}\label{e:hlim}
\lim_{\ell\to \infty}\lim_{n\to \infty} h_{n,\ell} =h, \qquad  \lim_{\ell\to \infty}h_{\ell} =0,
\end{equation}
when the limits are restricted to
the condition: $\ell \notin L$ and $ -\ell \notin L$.

Further, let $(\gamma_{n,k})_{k\in \mathbb Z}$ be a strictly positive sequence.
 Suppose it satisfies
\begin{equation}
  \gamma_{n,k} \to \gamma_k > 0 \label{e:gammalim}
\end{equation}
for each $k$, as $n\to\infty$. Also for some $\gammab,c>0$ and all $n,m \geq 1$, we assume
\begin{eqnarray}\label{e:gammatight}\notag
  \left|\sum_{k=0}^{m-1} \gamma_{n,k} - \gammab m\right| &\le& c m^{\alpha'}\\
  \left|\sum_{k=-m}^{-1} \gamma_{n,k} - \gammab m\right| &\le& c m^{\alpha'}
\end{eqnarray}
with $0<(1+\alpha)\alpha'<1$. Let $\lambda^*$ be a point outside $L$, and consider the weighted version $\Lambda$ of $\Xi$ where the $k$-th point after $\lambda^*$ (for $k \leq 0$,
the $(1-k)$-th point before $\lambda^*$) has weight $\gamma_k$. For $n$ large enough, one has also $\lambda^* \notin L_n$: define $\Lambda_n$ similarly.
Then the limit
$$
S_n(z)=\lim_{\ell\to\infty} \int_{[-\ell,\ell]}
\frac{1}{\lambda-z}\,d\Lambda_n(\lambda)
$$
exists for all $z\notin L_n$, is meromorphic with simple poles at $L_n$, and converges, uniformly on compacts with respect to the distance $d$ on the Riemann sphere $\mathbb{C} \cup \{\infty\}$, to $S(z)+\bar{\gamma}h$, where $S$ is a meromorphic function with simple poles at $L$, such that for all $z \notin L$,
$$
S(z)=\lim_{\ell\to\infty} \int_{[-\ell,\ell]}
\frac{1}{\lambda-z}\,d\Lambda(\lambda).
$$
 Moreover, for every $h'\in \mathbb R$,
the sum of delta masses $\Xi'_n$ at $S_n^{-1}(h'+\bar{\gamma}h)$ converges locally weakly to the sum of delta masses $\Xi'$ at $S^{-1}(h')$, and $(\Xi'_{n})_{n \geq 1}$, $\Xi'$ satisfy the assumptions from \eqref{e:lambda1} to
\eqref{e:hlim}.
\end{theorem}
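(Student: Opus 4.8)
The plan is to follow the scheme of the proof of Theorem~\ref{betainfinite}, but since the defining series for $S_n$ is only conditionally convergent and the measures are not symmetric, I route everything through the principal values $h_{n,\ell}$; the quantitative inputs \eqref{e:lambdasquared} and \eqref{e:gammatight}, together with $0<(1+\alpha)\alpha'<1$, are exactly what makes the relevant tails absolutely convergent.

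\textbf{Step 1: existence and a master decomposition.} Fix a cutoff $\ell\ge 1$ with $\pm\ell\notin L$ (replacing $\ell$ by a nearby value if needed); then for $n$ large also $\pm\ell\notin L_n$. Writing $S_n(z)=\lim_{R\to\infty}\int_{[-R,R]}(\lambda-z)^{-1}\,d\Lambda_n$ and splitting the integral at $\ell$, for $2|z|<\ell$ we get
$$
\int_{[-R,R]}\!\!\frac{d\Lambda_n(\lambda)}{\lambda-z}
=\!\!\sum_{|\lambda_{n,k}|\le\ell}\!\!\frac{\gamma_{n,k}}{\lambda_{n,k}-z}
+\!\!\sum_{\ell<|\lambda_{n,k}|\le R}\!\!\gamma_{n,k}\Big(\frac{1}{\lambda_{n,k}-z}-\frac{1}{\lambda_{n,k}}\Big)
+\bar\gamma\!\!\sum_{\ell<|\lambda_{n,k}|\le R}\!\!\frac{1}{\lambda_{n,k}}
+\!\!\sum_{\ell<|\lambda_{n,k}|\le R}\!\!\frac{\gamma_{n,k}-\bar\gamma}{\lambda_{n,k}}.
$$
As $R\to\infty$ the third sum tends to $\bar\gamma h_{n,\ell}$ by \eqref{e:lambdasquared2}. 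For the first ``tail'' sum, $|\gamma_{n,k}(\tfrac1{\lambda_{n,k}-z}-\tfrac1{\lambda_{n,k}})|\le 2|z|\gamma_{n,k}/\lambda_{n,k}^2$; I claim $\sum_{|\lambda_{n,k}|>\ell}\gamma_{n,k}/\lambda_{n,k}^2<\infty$ with value $\to 0$ as $\ell\to\infty$ uniformly in $n$, and likewise for $e_{n,\ell}:=\sum_{|\lambda_{n,k}|>\ell}(\gamma_{n,k}-\bar\gamma)/\lambda_{n,k}$. Indeed, \eqref{e:lambdasquared} gives $\Xi_n((\lambda^*,x])\le C_1+\tau_1 x^{1+\alpha}$ uniformly for $n$ large, hence $|\lambda_{n,k}|\gtrsim(|k|+1)^{1/(1+\alpha)}$; writing $\gamma_{n,k}=\bar\gamma+(\gamma_{n,k}-\bar\gamma)$ and applying Abel summation to the correction using \eqref{e:gammatight}, the resulting series converge because $(1+\alpha)\alpha'<2$, resp. $(1+\alpha)\alpha'<1$, and the remainders go to $0$ as $\ell\to\infty$ uniformly in $n$. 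Consequently the limit defining $S_n(z)$ exists and, for $2|z|<\ell$,
$$
S_n(z)=\sum_{|\lambda_{n,k}|\le\ell}\frac{\gamma_{n,k}}{\lambda_{n,k}-z}+\bar\gamma h_{n,\ell}+e_{n,\ell}+T_n(z),\qquad
T_n(z):=\sum_{|\lambda_{n,k}|>\ell}\gamma_{n,k}\Big(\frac1{\lambda_{n,k}-z}-\frac1{\lambda_{n,k}}\Big),
$$
with $T_n$ holomorphic on $\{|z|<\ell\}$. Letting $\ell\to\infty$ shows $S_n$ is meromorphic on $\mathbb C$ with simple poles at $L_n$ of residue $-\gamma_{n,k}$; differentiating under the sum gives $S_n'(z)=\sum_k\gamma_{n,k}(\lambda_{n,k}-z)^{-2}>0$ for real $z\notin L_n$, so, exactly as in Theorem~\ref{theoremstieltjes} and Proposition~\ref{L'}, $S_n$ is a strictly increasing bijection from each interval between consecutive points of $L_n$ onto $\mathbb R$, and the sign of $\Im S_n(z)$ forces $S_n^{-1}(h'')\subset\mathbb R$ for every $h''\in\mathbb R$. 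The same reasoning applies verbatim to $\Lambda$ (\eqref{e:lambdasquared}, \eqref{e:lambdasquared2} hold for $\Xi$ by hypothesis, and \eqref{e:gammatight} passes to the limit by \eqref{e:gammalim}), producing $S$ with $S(z)=\sum_{|\lambda_k|\le\ell}\frac{\gamma_k}{\lambda_k-z}+\bar\gamma h_\ell+e_\ell+T(z)$.

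\textbf{Step 2: convergence $S_n\to S+\bar\gamma h$.} Keep $\ell$ fixed and let $n\to\infty$. Since $\Xi_n\to\Xi$ locally weakly, $\Xi$ is simple and $\lambda^*\notin L$, the ordered points satisfy $\lambda_{n,k}\to\lambda_k$ for each $k$, and the index set $\{k:|\lambda_{n,k}|\le\ell\}$ stabilizes; with \eqref{e:gammalim} and Lemma~\ref{lemmarational}, the finite sum $\sum_{|\lambda_{n,k}|\le\ell}\frac{\gamma_{n,k}}{\lambda_{n,k}-z}$ converges to $\sum_{|\lambda_k|\le\ell}\frac{\gamma_k}{\lambda_k-z}$ uniformly on compacts for $d$. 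The same inputs plus the uniform tail estimates from Step~1 give $e_{n,\ell}\to e_\ell$ and $T_n\to T$ uniformly on compacts of $\{|z|<\ell\}$ (the limits being holomorphic there). Also $h_{n,\ell}\to\tilde h_\ell:=\lim_n h_{n,\ell}$, which exists by the first part of \eqref{e:hlim}. Applying Lemma~\ref{lemmameromorphic} with $f_n$ the finite pole-carrying sum and $g_n:=\bar\gamma h_{n,\ell}+e_{n,\ell}+T_n$ (holomorphic on $\{|z|<\ell\}$, hence no pole in common with $f$), we obtain, uniformly on compacts of $\{|z|<\ell\}$ for $d$,
$$
\lim_{n\to\infty}S_n(z)=\sum_{|\lambda_k|\le\ell}\frac{\gamma_k}{\lambda_k-z}+\bar\gamma\tilde h_\ell+e_\ell+T(z)=S(z)+\bar\gamma(\tilde h_\ell-h_\ell).
$$
The left side does not depend on $\ell$, nor does $S$, so $\tilde h_\ell-h_\ell$ is independent of $\ell$ (for $\ell$ large); since \eqref{e:hlim} gives $\tilde h_\ell\to h$ and $h_\ell\to 0$, this common value is $h$. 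Letting $\ell\to\infty$ yields $S_n\to S+\bar\gamma h$ uniformly on compacts of $\mathbb C$ for $d$, and $S$ is meromorphic with simple poles at $L$.

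\textbf{Step 3: level sets and verification of \eqref{e:lambda1}--\eqref{e:hlim}.} Fix $h'\in\mathbb R$; for each $j$ let $\mu_{n,j}$ (resp.\ $\mu_j$) be the unique solution of $S_n(z)=h'+\bar\gamma h$ in $(\lambda_{n,j},\lambda_{n,j+1})$ (resp.\ of $S(z)=h'$ in $(\lambda_j,\lambda_{j+1})$), which exist by Step~1. Given $\epsilon>0$ choose rationals $q_1<\mu_j<q_2$ in $(\lambda_j,\lambda_{j+1})$ with $q_2-q_1<\epsilon$; then $(S+\bar\gamma h)(q_1)<h'+\bar\gamma h<(S+\bar\gamma h)(q_2)$, and since $q_1,q_2\notin L$ the convergence $S_n\to S+\bar\gamma h$ is numerical there, so for $n$ large $S_n(q_1)<h'+\bar\gamma h<S_n(q_2)$ while also $q_1,q_2\in(\lambda_{n,j},\lambda_{n,j+1})$; hence $\mu_{n,j}\in(q_1,q_2)$, giving $\mu_{n,j}\to\mu_j$. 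For $f\in C_c(\mathbb R)$ supported in $(\lambda_{-j_0},\lambda_{j_0})$ we get $\sum_j f(\mu_{n,j})=\sum_{|j|\le j_0}f(\mu_{n,j})\to\sum_{|j|\le j_0}f(\mu_j)=\sum_j f(\mu_j)$, i.e.\ $\Xi'_n\to\Xi'$ locally weakly. Finally, since $\Xi'_n$ interlaces with $\Xi_n$, $|\Xi'_n(I)-\Xi_n(I)|\le 1$ for every interval $I$; integrating this discrepancy by parts against $|\lambda|^{-1-\alpha}$ shows \eqref{e:lambdasquared} holds for $\Xi'_n$ with $\tau'_\ell:=\tau_\ell+2\ell^{-1-\alpha}\to 0$; integrating against $\lambda^{-1}$ shows $h'_{n,\ell}:=\lim_{R\to\infty}\int\mathbf 1(\ell<|\lambda|<R)\lambda^{-1}\,d\Xi'_n$ exists and equals $h_{n,\ell}+r_{n,\ell}$ with $|r_{n,\ell}|\le 2/\ell$, and (by dominated convergence in the integral representation of $r_{n,\ell}$, using $\mu_{n,j}\to\mu_j$ and $\lambda_{n,j}\to\lambda_j$) $r_{n,\ell}\to r_\ell$ with $|r_\ell|\le 2/\ell$; hence $\lim_n h'_{n,\ell}=\tilde h_\ell+r_\ell\to h$ and $h'_\ell=h_\ell+r_\ell\to 0$ as $\ell\to\infty$, so $\Xi'_n,\Xi'$ satisfy \eqref{e:lambda1}--\eqref{e:hlim} with the same $\alpha$. $\hfill\square$

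\textbf{Main obstacle.} The delicate point is Step~1: because the series for $S_n$ is only conditionally convergent and the measures are not symmetric, one must decompose through the principal values $h_{n,\ell}$, and the Abel-summation bounds on the tails $T_n$ and $e_{n,\ell}$ are precisely where \eqref{e:lambdasquared}, \eqref{e:gammatight} and the inequality $0<(1+\alpha)\alpha'<1$ are consumed (they guarantee both absolute convergence and uniform-in-$n$ smallness of the remainders). A secondary subtlety is the identification $\tilde h_\ell-h_\ell=h$ in Step~2: one never shows $h_{n,\ell}\to h_\ell$, only that the difference of limits is $\ell$-independent, which — combined with \eqref{e:hlim} — pins it to $h$.
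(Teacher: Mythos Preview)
Your proof is correct and follows essentially the same route as the paper: isolate the principal value $\bar\gamma h_{n,\ell}$ from the tail, control the remaining tail by Abel summation against \eqref{e:gammatight} together with the growth $|\lambda_{n,k}|\gtrsim(|k|+1)^{1/(1+\alpha)}$ coming from \eqref{e:lambdasquared}, pass to the limit piece by piece via Lemmas~\ref{lemmarational} and~\ref{lemmameromorphic}, read off the level sets by the squeeze argument, and transfer \eqref{e:lambdasquared}--\eqref{e:hlim} to $\Xi'_n$ by interlacing. The only organizational differences are that the paper packages the tail as two half-line pieces $T_{n,\ell_0}+U_{n,\ell_0}$ (where you use the single split $T_n+e_{n,\ell}$), and that the paper proves $\lim_n h_{n,\ell_0}=h_{\ell_0}+h$ directly from local weak convergence of $\Xi_n$ on the shell $\ell_0<|\lambda|\le\ell_1$ before letting $\ell_1\to\infty$, whereas you recover the shift $h$ a posteriori from the $\ell$-independence of the limit --- a slightly cleaner bookkeeping, but the same mathematics; your one terse spot is the justification ``by dominated convergence'' for $r_{n,\ell}\to r_\ell$, which in the paper is done by the same shell-truncation device (the $\ell''$ argument) rather than a literal dominated-convergence theorem.
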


\begin{proof}
Let $n \geq 1$, large enough in order to ensure that $\lambda^* \notin L_n$, $\ell > \ell_0 > 1$, and let $z$ be a complex number with modulus smaller than $\ell_0/2$.
Let $k_{n, \ell_0}$ be the smallest index $k$ (if it exists) such that
$\lambda_{n,k} > \ell_0$, where $\lambda_{n,k}$ is (if it exists) the $k$-th point of $L_n$ after $\lambda^*$. Similarly, let $K_{n, \ell}-1$ be the largest index $k$ (if it exists) such that
$\lambda_{n,k} \leq \ell$.
If for $m \in \mathbb{Z}$, $$\Delta_{n,m} := \mathds{1}_{m \geq 0} \sum_{k=0}^{m-1} \gamma_{n,k}
- \mathds{1}_{m < 0} \sum_{k=m}^{-1} \gamma_{n,k} - \bar{\gamma} \,m,$$
then, in the case where $k_{n, \ell_0}$ and $K_{n, \ell}$ are well-defined and
$k_{n, \ell_0} < K_{n, \ell}$:
\begin{align*}
 \int_{(\ell_0,\ell]}
\frac{1}{\lambda-z}\,d\Lambda_n(\lambda)
& = \sum_{k_{n,\ell_0} \leq k < K_{n,\ell} } \frac{ \gamma_{n,k}}{\lambda_{n,k} - z}
 = \bar{\gamma} \sum_{k_{n,\ell_0} \leq k < K_{n,\ell}} \frac{1}{\lambda_{n,k} - z}
 \\ &  +  \sum_{k_{n,\ell_0} \leq k < K_{n,\ell}} \frac{\Delta_{n,k+1} - \Delta_{n,k}}{\lambda_{n,k} - z}
 =  \bar{\gamma} \left( \sum_{k_{n,\ell_0} \leq k < K_{n,\ell}} \frac{1}{\lambda_{n,k} - z} \right) +
 \frac{\Delta_{n,K_{n,\ell}}}{ \lambda_{n, K_{n,\ell}-1} - z } \\ & - \frac{\Delta_{n,
k_{n,\ell_0}}}{ \lambda_{n, k_{n,\ell_0}} - z }
 + \sum_{k_{n,\ell_0} +1 \leq k < K_{n,\ell} } \Delta_{n,k} \left(\frac{1}{\lambda_{n,k-1} -z} -
\frac{1}{\lambda_{n,k} -z} \right),
\end{align*}
which implies
\begin{align*}
 \int_{(\ell_0,\ell]}
\frac{1}{\lambda-z}\,d\Lambda_n(\lambda) & -  \bar{\gamma} \int_{(\ell_0, \ell]}  \, \frac{d\Xi_n(\lambda)}{\lambda}  =
\bar{\gamma} z \left( \sum_{k_{n,\ell_0} \leq k < K_{n,\ell}} \frac{1}{\lambda_{n,k} (\lambda_{n,k} - z)} \right)
+ \frac{\Delta_{n,K_{n,\ell}}}{ \lambda_{n, K_{n,\ell} - 1} - z } \\ &
 - \frac{\Delta_{n,k_{n,\ell_0}}}{ \lambda_{n, k_{n,\ell_0}} - z }
 + \sum_{k_{n,\ell_0} +1 \leq k < K_{n,\ell} } \Delta_{n,k} \left(\frac{\lambda_{n,k}
 - \lambda_{n,k-1}}{(\lambda_{n,k-1} -z)
 (\lambda_{n,k} -z)} \right).
\end{align*}
Note that in case where $k_{n,\ell_0}$ or $K_{n,\ell}$ is not well-defined, and in case where
$k_{n,\ell_0} \geq K_{n,\ell}$, the left-hand side is zero, since $L_n$ has no
point in the interval $(\ell_0, \ell]$.
Let us now check that for $\ell$ going to infinity, this quantity converges,
 uniformly in $\{z \in \mathbb{C}, |z| <
\ell_0/2\}$, to the function $T_{n, \ell_0}$, holomorphic on this open set, and given by
 \begin{align}
T_{n, \ell_0}(z) & =
\bar{\gamma} z \left( \sum_{k \geq k_{n,\ell_0} } \frac{1}{\lambda_{n,k} (\lambda_{n,k} - z)} \right)
 + \frac{\Delta_{n,K_{n,\infty}}}{ \lambda_{n, K_{n,\infty}-1} - z } - \frac{\Delta_{n,k_{n,\ell_0}}}{ \lambda_{n, k_{n,\ell_0}} - z } \nonumber  \\ &
 + \sum_{ k \geq  k_{n,\ell_0}+1} \Delta_{n,k} \left(\frac{\lambda_{n,k}
 - \lambda_{n,k-1}}{(\lambda_{n,k-1} -z)
 (\lambda_{n,k} -z)} \right), \label{Tnz}
 \end{align}
 if $L_n$ has at least one point in $(\ell_0, \infty)$, and $T_{n, \ell_0} (z) = 0$
 otherwise. In the formula above, $K_{n, \infty}-1$ denotes the index of the largest point
 of $L_n$ if this set is bounded from above, and otherwise,
 $$\frac{\Delta_{n,K_{n,\infty}}}{ \lambda_{n, K_{n,\infty}-1} - z } := 0.$$
 In order to prove this convergence, it is sufficient to check,
 in case where
 $L_n \cap (\ell_0, \infty) \neq \emptyset$,
 the uniform convergence
 $$
 \frac{\Delta_{n,K_{n,\ell}}}{ \lambda_{n, K_{n,\ell} - 1} - z }
 \underset{\ell \rightarrow \infty}{\longrightarrow}
  \frac{\Delta_{n,K_{n,\infty}}}{ \lambda_{n, K_{n,\infty} - 1} - z },$$
  for $|z| \leq \ell_0/2$, and the fact that
 \begin{equation}
 \sup_{z \in \mathbb{C}, |z| \leq \ell_0/2} \left( \sum_{k \geq k_{n,\ell_0} }  \frac{1}{|\lambda_{n,k}||\lambda_{n,k}
  - z|}+ \sum_{ k \geq  k_{n,\ell_0}+1} |\Delta_{n,k}| \left(\frac{|\lambda_{n,k}
 - \lambda_{n,k-1}|}{|\lambda_{n,k-1} -z| \,
 |\lambda_{n,k} -z|} \right) \right)< \infty. \label{deltafinite}
 \end{equation}
 The first statement is  immediate if $L_n$ is bounded from above. If $L_n$ is unbounded from
 above,
 let us remark that for $k \geq k_{n, \ell_0}$, $|z| \leq \ell_0/2$, one
has $|\lambda_{n,k} - z| \geq \lambda_{n,k}/2$, and then it is sufficient to
show:
\begin{equation}
\frac{\Delta_{n,K_{n,\ell}}}{ \lambda_{n, K_{n,\ell} - 1}}  \underset{\ell \rightarrow \infty}{\longrightarrow} 0. \label{deltak1}
\end{equation}
Similarly, the statement \eqref{deltafinite} is implied by:
\begin{equation}
\sum_{k \geq k_{n,\ell_0} } \frac{1}{\lambda_{n,k}^2}+ \sum_{ k \geq  k_{n,\ell_0}+1}  \frac{
|\Delta_{n,k}| (\lambda_{n,k}
 - \lambda_{n,k-1})}{\lambda_{n,k-1} \,
 \lambda_{n,k}} < \infty. \label{deltak2}
\end{equation}
In order to prove \eqref{deltak1}, let us first use the majorization
 \eqref{e:lambdasquared}, which implies, for all $\ell > 2$,
 $$\Xi_n ([2, \ell]) \leq
 \ell^{1 + \alpha} \, \int_{2}^{\ell} \frac{d\Xi_n(\lambda)}{\lambda^{1 + \alpha}}
 \leq  \ell^{1 + \alpha}  \, \int_{\mathbb R}\frac{\mathbf 1(|\lambda|>1)}{|\lambda|^{1+\alpha}}\,d\Xi_n(\lambda) \leq \tau_1 \,  \ell^{1 + \alpha},$$
 and then
 $$\Xi_n ([\lambda^*, \ell]) \leq \tau \, \ell^{1+ \alpha}$$
 where
 \begin{equation}
 \tau := \tau_1 + \Xi_n ([\lambda^* \wedge 2, 2]). \label{tau}
 \end{equation}
We deduce, for $k \geq 1$ large enough in order to insure that $\lambda_{n,k} > 2$,
$$k = \Xi_n ([\lambda^*, \lambda_{n,k}]) \leq \tau \, \lambda_{n,k}^{1+\alpha}$$
and then
\begin{equation}
\lambda_{n,k} \geq (k/ \tau)^{1/(1+\alpha)}. \label{minorizationlambda}
\end{equation}
By using \eqref{e:gammatight}, this inequality implies:
$$\frac{\Delta_{n,k+1}}{ \lambda_{n,k}} \leq c (k+1)^{\alpha'} (k/\tau)^{-1/(1+\alpha)},$$
which tends to zero when $k$ goes to infinity, since $\alpha' < 1/(1+\alpha)$ by assumption.
Therefore, we have \eqref{deltak1}.

Moreover, the left-hand side of \eqref{deltak2} is given by
\begin{align*}
 & \int_{\mathbb R}  \frac{\mathbf 1(|\lambda|>\ell_0)}{|\lambda|^{2}}\,d\Xi_n(\lambda)
+ \sum_{ k \geq  k_{n,\ell_0}+1}
|\Delta_{n,k}|  \, \left( \frac{1}{\lambda_{n,k-1}} - \frac{1}{\lambda_{n,k}} \right)
\\ & \leq \int_{\mathbb R}\frac{\mathbf 1(|\lambda|>\ell_0)}{|\lambda|^{1+\alpha}}\,d\Xi_n(\lambda)
+ c \, \sum_{ k \geq  k_{n,\ell_0}+1} |k|^{\alpha'} \, \left( \frac{1}{\lambda_{n,k-1}} - \frac{1}{\lambda_{n,k}} \right)
\\ & \leq \tau_{\ell_0} + c \left( \frac{|k_{n,\ell_0}+1|^{\alpha'}}{\lambda_{n,k_{n,\ell_0}}}
+ \sum_{ k \geq  k_{n,\ell_0}+1} \frac{(|k+1|^{\alpha'} - |k|^{\alpha'})}{\lambda_{n,k}} \right).
\end{align*}
If $L_n$ is bounded from above, the finiteness of this quantity is obvious. Otherwise, we know
that for $k$ large enough, $(|k+1|^{\alpha'} - |k|^{\alpha'})$ is bounded by a constant
times $k^{\alpha' - 1}$, and $\lambda_{k, n}$ dominates $k^{1/(1+\alpha)}$. Hence, it is sufficient
to check the finiteness of the following expression:
$$ \sum_{k = 1}^{\infty}  k^{\alpha' -1} k^{-1/(1+\alpha)},$$
which is satisfied since by assumption,
$$\alpha' - 1 - \frac{1}{1 + \alpha} < -1.$$
We have now proven:
\begin{equation} \int_{(\ell_0,\ell]}
\frac{1}{\lambda-z}\,d\Lambda_n(\lambda)  -  \bar{\gamma} \int_{(\ell_0, \ell]}  \, \frac{d\Xi_n(\lambda)}{\lambda} \underset{\ell \rightarrow \infty}{\longrightarrow}
T_{n, \ell_0}(z),  \label{uniformTnz}
\end{equation}
uniformly on the set $\{z \in \mathbb{C}, |z| < \ell_0/2\}$, where the holomorphic function
$T_{n, \ell_0}$ is given by the formula \eqref{Tnz}.

Similarly, there exists an holomorphic function $U_{n, \ell_0}$ on
$\{z \in \mathbb{C}, |z| < \ell_0/2\}$, such that uniformly on this set,
 \begin{equation} \int_{[-\ell, -\ell_0)}
\frac{1}{\lambda-z}\,d\Lambda_n(\lambda)  -  \bar{\gamma} \int_{[-\ell, -\ell_0)}  \, \frac{d\Xi_n(\lambda)}{\lambda} \underset{\ell \rightarrow \infty}{\longrightarrow}
U_{n, \ell_0}(z).  \label{uniformUnz}
\end{equation}
The function $U_{n, \ell_0}$ can be explicitly described by a formula similar to \eqref{Tnz} (we omit
the detail of this formula).
By combining \eqref{e:lambdasquared2}, \eqref{uniformTnz} and \eqref{uniformUnz}, one deduces
the following uniform convergence on $\{z \in \mathbb{C}, |z| < \ell_0/2\}$:
 $$\int_{[-\ell, \ell] \backslash [-\ell_0, \ell_0]}
\frac{1}{\lambda-z}\,d\Lambda_n(\lambda)  \underset{\ell \rightarrow \infty}{\longrightarrow}
T_{n, \ell_0}(z) + U_{n, \ell_0}(z) + \bar{\gamma} h_{n, \ell_0}.$$
One deduces, by using Lemma \ref{lemmameromorphic}, that
$$\int_{[-\ell, \ell]}
\frac{1}{\lambda-z}\,d\Lambda_n(\lambda)
\underset{\ell \rightarrow \infty}{\longrightarrow}
T_{n, \ell_0}(z) + U_{n, \ell_0}(z) + \bar{\gamma} h_{n, \ell_0}
+ \int_{[-\ell_0, \ell_0]}
\frac{1}{\lambda-z}\,d\Lambda_n(\lambda) =: S_{n, \ell_0} (z),$$
uniformly on any compact subset of $\{z \in \mathbb{C}, |z| < \ell_0/2\}$, for the
distance $d$ on the Riemann sphere. One checks immediately that the poles of $S_{n, \ell_0}$ with modulus smaller than or equal to $\ell_0/2$ are exactly the points of $L_n$ satisfying the
same condition. Moreover, the convergence just above implies that for $\ell_1 > \ell_0 > 1$,
the meromorphic functions $S_{n, \ell_0}$ and $S_{n, \ell_1}$ coincide on
$\{z \in \mathbb{C}, |z| < \ell_0/2\}$: hence, there exists a meromorphic function
$S_n$ on $\mathbb{C}$, such that for all $\ell_0 > 1$, the restriction
of $S_n$ to $\{z \in \mathbb{C}, |z| < \ell_0/2\}$ is equal to $S_{n, \ell_0}$.
The poles of $S_n$ are exactly the points of $L_n$, and one has, uniformly on
all compact sets of $\mathbb{C}$ and for the distance $d$,
$$\int_{[-\ell, \ell]}
\frac{1}{\lambda-z}\,d\Lambda_n(\lambda)
\underset{\ell \rightarrow \infty}{\longrightarrow} S_n(z).$$
In particular, the convergence holds pointwise for all $z \notin L_n$.

In an exactly similar way, one can prove that uniformly on compact sets of $\mathbb{C}$,
for the distance $d$,
$$\int_{[-\ell, \ell]}
\frac{1}{\lambda-z}\,d\Lambda(\lambda)
\underset{\ell \rightarrow \infty}{\longrightarrow} S(z)$$
where for all $\ell_0 > 1$,
$$S(z) := T_{\ell_0}(z) + U_{\ell_0}(z) + \bar{\gamma} h_{\ell_0}
+ \int_{[-\ell_0, \ell_0]}
\frac{1}{\lambda-z}\,d\Lambda(\lambda),$$
on the set $\{z \in \mathbb{C}, |z|< \ell_0/2\}$, $T_{\ell_0}$ and $U_{\ell_0}$ being
defined by the same formulas as $T_{n,\ell_0}$ and $U_{n,\ell_0}$, except than one removes all the
indices $n$. In order to show this convergence, it is sufficient to check that the
assumptions \eqref{e:lambdasquared} and  \eqref{e:gammatight} are satisfied if the
indices $n$ are removed. For \eqref{e:gammatight}, it is an immediate consequence
 of the convergence \eqref{e:gammalim}, since the constant $c$ does not depend on $n$.
For \eqref{e:lambdasquared}, let us first observe that for all $\ell > 1$, and for
any continuous function $\Phi$ with compact support, such that for all $\lambda \in \mathbb{R}$,
$$\Phi(\lambda) \leq \frac{\mathbf{1}(|\lambda| > \ell)}{|\lambda|^{1+\alpha}},$$
one has for all $n \geq 1$,
$$ \int_{\mathbb R} \Phi(\lambda) d\Xi_n(\lambda)  \leq \int_{\mathbb R}\frac{\mathbf 1(|\lambda|>\ell)}{|\lambda|^{1+\alpha}}\,d\Xi_n(\lambda)
  \le \tau_\ell.$$
Since $\Xi_n$ converges weakly to $\Xi$ when $n$ goes to infinity, one deduces:
$$\int_{\mathbb R} \Phi(\lambda) d\Xi(\lambda) \leq \tau_{\ell}.$$
By taking $\Phi$ increasing to
$$\lambda \mapsto \mathbf{1}(|\lambda| > \ell)/ |\lambda|^{1+\alpha},$$
one obtains
$$\int_{\mathbb R}\frac{\mathbf 1(|\lambda|>\ell)}{|\lambda|^{1+\alpha}}\,d\Xi(\lambda)
  \le \tau_\ell,$$
  i.e. the equivalent of  \eqref{e:lambdasquared} for the measure $\Xi$.

  Once the existence of the functions $S_n$ and $S$ is ensured, it remains to
  prove the convergence of $S_n$ towards $S+ \bar{\gamma} h$, uniformly on compact sets for the distance $d$.
  In order to check this convergence, it is sufficient to prove that there exists
  $\ell_0 > 1$ arbitrarily large, such that uniformly on any compact set
  of $\{z \in \mathbb{C}, |z| < \ell_0/2\}$,
  \begin{align*}
 & T_{n, \ell_0}(z) + U_{n, \ell_0}(z) + \bar{\gamma} h_{n, \ell_0}
+ \int_{[-\ell_0, \ell_0]}
\frac{1}{\lambda-z}\,d\Lambda_n(\lambda)
\\ & \underset{n \rightarrow \infty}{\longrightarrow} T_{\ell_0}(z) + U_{\ell_0}(z) + \bar{\gamma} (h_{ \ell_0} + h )
+ \int_{[-\ell_0, \ell_0]}
\frac{1}{\lambda-z}\,d\Lambda(\lambda).
 \end{align*}
  In fact, we will prove this convergence for any $\ell_0 >2$ such that $\ell_0$ and
  $-\ell_0$ are not in $L$, and then not in $L_n$ for $n$ large enough.
  By Lemma \ref{lemmameromorphic}, it is sufficient to check for such an $\ell_0$:
  \begin{equation}
   h_{n, \ell_0} \underset{n \rightarrow \infty}{\longrightarrow} h_{\ell_0} + h, \label{convergenceh}
  \end{equation}
 \begin{equation}
  \int_{[-\ell_0, \ell_0]}
\frac{1}{\lambda-z}\,d\Lambda_n(\lambda)
\underset{n \rightarrow \infty}{\longrightarrow}
 \int_{[-\ell_0, \ell_0]}
\frac{1}{\lambda-z}\,d\Lambda(\lambda), \label{convergencegamma}
\end{equation}
uniformly on $\{z \in \mathbb{C}, |z| < \ell_0/2\}$ for the distance $d$,
\begin{equation}
T_{n, \ell_0}(z) \underset{n \rightarrow \infty}{\longrightarrow} T_{\ell_0}(z), \label{convergenceT}
\end{equation}
uniformly on $\{z \in \mathbb{C}, |z| < \ell_0/2\}$, and
\begin{equation}
U_{n, \ell_0}(z) \underset{n \rightarrow \infty}{\longrightarrow} U_{\ell_0}(z), \label{convergenceU}
\end{equation}
also uniformly on $\{z \in \mathbb{C}, |z| < \ell_0/2\}$. Since the
proof of \eqref{convergenceU} is exactly similar to the proof of \eqref{convergenceT},
we will omit it and we will then show successively \eqref{convergenceh},  \eqref{convergencegamma}
and \eqref{convergenceT}.

Let us first prove \eqref{convergenceh}. For all $\ell_1 > \ell_0 $ such that
$-\ell_1$ and $\ell_1$ are not in $L$, one has
$$h_{n, \ell_0} - h_{n, \ell_1} =  \int_{\mathbb R}\frac{\mathbf 1(\ell_0<|\lambda| \leq \ell_1)}{\lambda}\,d\Xi_n(\lambda)$$
and
$$h_{\ell_0} - h_{\ell_1} =  \int_{\mathbb R}\frac{\mathbf 1(\ell_0<|\lambda| \leq \ell_1)}{\lambda}\,d\Xi(\lambda).$$
Now, $-\ell_1, -\ell_0, \ell_0, \ell_1$ are not in the support of $\Xi$, and since
$\Xi$ is a discrete measure, there is a neighborhood of $\{-\ell_1, -\ell_0, \ell_0, \ell_1\}$
which does not charge $\Xi$. One deduces that there exist two functions
$\Phi$ and $\Psi$ from $\mathbb{R}$ to $\mathbb{R}_+$, continuous with compact support,
such that for all $\lambda \in \mathbb{R}$,
$$\Phi(\lambda) \leq \frac{\mathbf 1(\ell_0<|\lambda| \leq \ell_1)}{\lambda}
\leq \Psi(\lambda)$$
and
$$\int_{\mathbb R} \Phi(\lambda) \, d\Xi(\lambda) =  h_{\ell_0} - h_{\ell_1}
= \int_{\mathbb R} \Psi(\lambda) \, d\Xi(\lambda).$$
Since $\Xi_n$ tends weakly to $\Xi$ when $n$ goes to infinity, one deduces
that
$$\int_{\mathbb R} \Phi(\lambda) \, d\Xi_n(\lambda) \underset{n \rightarrow \infty}
{\longrightarrow} \int_{\mathbb R} \Phi(\lambda) \, d\Xi(\lambda) =  h_{\ell_0} - h_{\ell_1} $$
and similarly,
$$\int_{\mathbb R} \Psi(\lambda) \, d\Xi_n(\lambda) \underset{n \rightarrow \infty}
{\longrightarrow} h_{\ell_0} - h_{\ell_1}.$$
By the squeeze theorem, one deduces
$$ h_{n, \ell_0} - h_{n, \ell_1} =  \int_{\mathbb R}\frac{\mathbf 1(\ell_0<|\lambda| \leq \ell_1)}{\lambda}\,d\Xi_n(\lambda)\underset{n \rightarrow \infty}
{\longrightarrow} h_{\ell_0} - h_{\ell_1}.$$
Hence,
$$\underset{n \rightarrow \infty}{\lim} h_{n, \ell_0}
- \underset{n \rightarrow \infty}{\lim} h_{n, \ell_1} = h_{\ell_0} - h_{\ell_1}.$$
where, by assumption, the two limits in the left-hand side are well-defined. By
\eqref{e:hlim}, one deduces, by taking $\ell_1 \rightarrow \infty$,
$$\underset{n \rightarrow \infty}{\lim} h_{n, \ell_0} - h = h_{\ell_0},$$
which proves \eqref{convergenceh}.
In order to show \eqref{convergencegamma}, let us first check the following properties,
available for all $k \in \mathbb{Z}$:
\begin{itemize}
\item If $\lambda_k$ is well-defined, then $\lambda_{n,k}$ is well-defined for all
$n$ large enough and tends to $\lambda_{k}$ when $n$ goes to infinity.
\item If $\lambda_k$ is not well-defined, then for all $A > 0$, there are
finitely many indices $n$ such that $\lambda_{n,k}$ is well-defined and
in the interval $[-A,A]$.
\end{itemize}
By symmetry, we can assume that $k \geq 1$. We know that $\lambda^*$ is not
in $L$, and then for $\epsilon > 0$ small enough,
\begin{equation}
L \cap [\lambda^* - 3 \epsilon, \lambda^* + 3 \epsilon] = \emptyset, \label{nolambda*1}
\end{equation}
Let us fix $\epsilon > 0$ satisfying this property.
Since $\Xi_n$ tends locally weakly to $\Xi$, we deduce that
for $n$ large enough,
\begin{equation}
 L_n \cap [\lambda^* - 2 \epsilon, \lambda^* + 2\epsilon] = \emptyset, \label{nolambda*2}
 \end{equation}
which implies that $\lambda_k \geq \lambda_1 > \lambda^* + 2\epsilon$.
Now, let $\Phi$ and $\Psi$ be two continuous functions with compact support,
such that:
\begin{itemize}
\item For $\lambda \leq \lambda^* - \epsilon$, $$\Phi(\lambda) = \Psi(\lambda) = 0.$$
\item For $\lambda^* - \epsilon \leq  \lambda \leq \lambda^* + \epsilon$,
 $$0 \leq \Phi(\lambda) = \Psi(\lambda) \leq  1.$$
 \item For $\lambda^* + \epsilon \leq \lambda \leq \lambda_k - \epsilon$,
 $$\Phi(\lambda) = \Psi(\lambda) = 1$$ (recall that
  $\lambda^* + \epsilon <  \lambda_k - \epsilon$).
\item  For $\lambda_k - \epsilon \leq \lambda \leq \lambda_k$,
$$0 \leq \Phi(\lambda) \leq \Psi(\lambda) = 1.$$
\item For $\lambda_k  \leq \lambda \leq \lambda_k+ \epsilon$,
$$0 = \Phi(\lambda) \leq \Psi(\lambda) \leq 1.$$
\item For $ \lambda \geq \lambda_k+ \epsilon$,
$$\Phi(\lambda) = \Psi(\lambda) = 0.$$
\end{itemize}
By using \eqref{nolambda*1}, we deduce
$$\int_{\mathbb{R}} \Phi(\lambda) d \Xi(\lambda)
\leq \Xi ([\lambda^* - \epsilon, \lambda_k)) =
 \Xi ([\lambda^*, \lambda_k)) = k-1$$
 and
 $$\int_{\mathbb{R}} \Psi(\lambda) d \Xi(\lambda)
\geq \Xi ([\lambda^* + \epsilon, \lambda_k]) =
 \Xi ([\lambda^*, \lambda_k]) = k.$$
 Hence, for $n$ large enough,
 $$\int_{\mathbb{R}} \Phi(\lambda) d \Xi_n(\lambda) \leq k - 1/2$$
 and
 $$\int_{\mathbb{R}} \Psi(\lambda) d \Xi_n(\lambda) \geq k - 1/2,$$
 which implies
 $$ \Xi_n ([\lambda^*, \lambda_k - \epsilon)) =
  \Xi_n ([\lambda^*+ \epsilon, \lambda_k - \epsilon))
  \leq \int_{\mathbb{R}} \Phi(\lambda) d \Xi_n(\lambda) \leq k - 1/2$$
 and
 $$\Xi_n ([\lambda^*, \lambda_k + \epsilon)) =
  \Xi_n ([\lambda^*- \epsilon, \lambda_k + \epsilon])
  \geq \int_{\mathbb{R}} \Psi(\lambda) d \Xi_n(\lambda) \geq k - 1/2.$$
  Therefore, for $n$ large enough the point $\lambda_{n,k}$ is well-defined and between
  $\lambda_{k} - \epsilon$ and $\lambda_{k} + \epsilon$. Since $\epsilon$ and
  be taken arbitrarily small, we
  have proven the convergence claimed above in the case where $\lambda_k$ is
  well-defined. If $\lambda_k$ is not well-defined, let us choose $\epsilon > 0$
  satisfying \eqref{nolambda*1}, and $A > |\lambda^*|$. Let $\Phi$ be
  a continuous function with compact support, such that:
  \begin{itemize}
  \item For all $\lambda \in \mathbb{R}$, $\Phi(\lambda) \in [0,1]$.
  \item For all $\lambda \in [\lambda^*, A]$, $\Phi(\lambda) = 1$.
  \item For all $\lambda \notin (\lambda^*-\epsilon, A + \epsilon)$, $\Phi(\lambda) = 0$.
  \end{itemize}
Since  $\lambda_k$ is not well-defined,
$$\int_{\mathbb{R}} \Phi(\lambda) d \Xi(\lambda) \leq \Xi([\lambda^*- \epsilon, A+ \epsilon
]) = \Xi([\lambda^*, A +\epsilon]) \leq \Xi([\lambda^*, \infty)) \leq k-1,$$
and then for $n$ large enough,
$$\Xi_n([\lambda^*, A]) \leq  \int_{\mathbb{R}} \Phi(\lambda) d \Xi_n(\lambda) \leq k-1/2,$$
which implies that $\lambda_{n,k}$ cannot be well-defined and smaller than or equal to $A$.
This proves the second claim. Let us now go back to the proof of \eqref{convergencegamma}.
If $L \cap [-\ell_0, \ell_0] = \emptyset$, then $L \cap [-\ell_0- \epsilon, \ell_0+ \epsilon]
= \emptyset$ for some $\epsilon > 0$. Hence, there exists a nonnegative, continuous function
with compact support $\Phi$ such that $\Phi(\lambda) = 1$ for all $\lambda \in [-\ell_0, \ell_0]$,
 and
 $$\int_{\mathbb{R}} \Phi(\lambda) d \Xi(\lambda) = 0,$$
 which implies, for $n$ large enough,
 $$\Xi([-\ell_0, \ell_0]) \leq
 \int_{\mathbb{R}} \Phi(\lambda) d \Xi_n(\lambda) \leq 1/2,$$
i.e. $L_n \cap [-\ell_0, \ell_0] = \emptyset$. Hence, for $n$ large enough, the
two expressions involved in \eqref{convergencegamma} are identically zero.
If $L \cap [-\ell_0, \ell_0] \neq \emptyset$, let $k_1$ and $k_2$ be the smallest and
the largest indices $k$ such that $\lambda_k \in (-\ell_0, \ell_0)$.
Since $\lambda_{n,k_1}$ and $\lambda_{n,k_2}$ converge respectively to
$\lambda_{k_1}$ and $\lambda_{k_2}$ when $n$ goes to infinity, one has
$\lambda_{n,k_1}$ and $\lambda_{n,k_2}$ in the interval $(-\ell_0, \ell_0)$ for $n$ large enough.
On the other hand, $\lambda_{k_2+1}$ is either strictly larger than $\ell_0$ (strictly because by assumption,
$\ell_0 \notin L$), or not well-defined. In both cases, there are only finitely many indices $n$
such that $\lambda_{n,k_2+1} \leq \ell_0$. Similarly, by using the fact that
$-\ell_0 \notin L$, one checks that there are finitely many indices $n$ such that
$\lambda_{n,k_1-1} \geq -\ell_0$. Hence, for $n$ large enough, the indices $k$ such that
$\lambda_{n,k} \in [-\ell_0, \ell_0]$ are exactly the integers between $k_1$ and $k_2$,
which implies
$$ \int_{[-\ell_0, \ell_0]}
\frac{1}{\lambda-z}\,d\Lambda_n(\lambda) = \sum_{k= k_1}^{k_2}
\frac{\gamma_{n,k}}{\lambda_{n,k} - z},$$
whereas
$$ \int_{[-\ell_0, \ell_0]}
\frac{1}{\lambda-z}\,d\Lambda(\lambda) = \sum_{k= k_1}^{k_2}
\frac{\gamma_{k}}{\lambda_{k} - z}.$$
We have shown that for all $k$ between $k_1$ and $k_2$,
$\lambda_{n,k}$ tends to $\lambda_k$ when $n$ goes to infinity
and by assumption, $\gamma_{n,k}$ tends to $\gamma_k$. Moreover,
the numbers $\lambda_k$ are all distincts, and by assumption, $\gamma_k \neq 0$ for
all $k$. Hence, one can apply Lemma \ref{lemmarational} to deduce \eqref{convergencegamma}.

Let us now prove \eqref{convergenceT}. If $L \cap (\ell_0, \infty) = \emptyset$,
this statement can be deduced from the following convergences,
uniformly on $\{z \in \mathbb{C}, |z| < \ell_0/2\}$:
\begin{equation}
\mathds{1}_{L_n \cap (\ell_0, \infty) \neq \emptyset} \, \sum_{k \geq k_{n,\ell_0} } \frac{1}{\lambda_{n,k} (\lambda_{n,k} - z)}
\underset{n \rightarrow \infty}{\longrightarrow} 0, \label{convergenceterm1}
\end{equation}
\begin{equation}
\mathds{1}_{L_n \cap (\ell_0, \infty) \neq \emptyset} \, \frac{\Delta_{n,K_{n,\infty}}}{ \lambda_{n, K_{n,\infty}-1} - z }
\underset{n \rightarrow \infty}{\longrightarrow} 0, \label{convergenceterm2}
\end{equation}
\begin{equation}
\mathds{1}_{L_n \cap (\ell_0, \infty) \neq \emptyset} \, \frac{\Delta_{n,k_{n,\ell_0}}}{
 \lambda_{n, k_{n,\ell_0}} - z }
\underset{n \rightarrow \infty}{\longrightarrow} 0 \label{convergenceterm3}
\end{equation}
and
\begin{equation}
\mathds{1}_{L_n \cap (\ell_0, \infty) \neq \emptyset} \,
\sum_{ k \geq  k_{n,\ell_0}+1} \Delta_{n,k} \left(\frac{\lambda_{n,k}
 - \lambda_{n,k-1}}{(\lambda_{n,k-1} -z)
 (\lambda_{n,k} -z)} \right) \underset{n \rightarrow \infty}{\longrightarrow} 0. \label{convergenceterm4}
\end{equation}
If $L \cap (\ell_0, \infty) \neq \emptyset$, then we have proven previously that
$\lambda_{n,k_{\ell_0}}$ is well-defined for $n$ large enough and converges to
$\lambda_{k_{\ell_0}} > \ell_0$ when $n$ goes to infinity: in particular,
$\lambda_{n,k_{\ell_0}} > \ell_0$ for $n$ large enough. Moreover, one of the
two following cases occurs:
\begin{itemize}
\item If $\lambda_{k_{\ell_0} - 1}$ is well-defined, then it is strictly smaller
than $\ell_0$ (strictly because $\ell_0$ is, by assumption, not in $L$), and then
$\lambda_{n,k_{\ell_0} - 1}$ is, for $n$ large enough, well-defined and strictly
smaller than $\ell_0$.
\item If $\lambda_{k_{\ell_0} - 1}$ is not well-defined, and if $A > 0$,
 then for $n$ large enough,
$\lambda_{n,k_{\ell_0} - 1}$ is not well-defined or has an absolute value strictly
greater than $A$. By taking $A = \lambda_{k_{\ell_0}} + 1$, one deduces
that for $n$ large enough, $\lambda_{n,k_{\ell_0} - 1}$ is not-well defined,
strictly smaller than $- \lambda_{k_{\ell_0}} - 1$ or strictly
larger than $ \lambda_{k_{\ell_0}} + 1$. This last case is impossible for $n$
large enough, since $\lambda_{n,k_{\ell_0} - 1}$ is smaller than
 $\lambda_{n,k_{\ell_0}}$, which tends to $\lambda_{k_{\ell_0}}$. Hence, there are
 finitely many indices $n$ such that $\lambda_{n,k_{\ell_0} - 1}$ is well-defined
 and larger than $- \lambda_{k_{\ell_0}} - 1$, and a fortiori, larger than
 or equal to $\ell_0$.
\end{itemize}
All this discussion implies easily that for $n$ large enough, $k_{n,\ell_0} =
k_{\ell_0}$, and then it is sufficient to prove the uniform convergences
on $\{z \in \mathbb{C}, |z| < \ell_0/2\}$:
\begin{equation}
 \sum_{k \geq k_{\ell_0} } \frac{1}{\lambda_{n,k} (\lambda_{n,k} - z)}
\underset{n \rightarrow \infty}{\longrightarrow} \sum_{k \geq k_{\ell_0} } \frac{1}{\lambda_{k} (\lambda_{k} - z)}, \label{convergenceterm5}
\end{equation}
\begin{equation}
 \frac{\Delta_{n,k_{\ell_0}}}{ \lambda_{n, k_{\ell_0}} - z }
\underset{n \rightarrow \infty}{\longrightarrow}
\frac{\Delta_{k_{\ell_0}}}{ \lambda_{ k_{\ell_0}} - z }
 \label{convergenceterm6}
\end{equation}
and
\begin{align}
&  \frac{\Delta_{n,K_{n,\infty}}}{ \lambda_{n, K_{n,\infty}-1} - z } + \sum_{ k \geq  k_{\ell_0}+1} \Delta_{n,k} \left(\frac{\lambda_{n,k}
 - \lambda_{n,k-1}}{(\lambda_{n,k-1} -z)
 (\lambda_{n,k} -z)} \right) \nonumber \\ & \underset{n \rightarrow \infty}{\longrightarrow}
  \frac{\Delta_{K_{\infty}}}{
 \lambda_{ K_{\infty}-1} - z } +\sum_{ k \geq  k_{\ell_0}+1} \Delta_{k} \left(\frac{\lambda_{k}
 - \lambda_{k-1}}{(\lambda_{k-1} -z)
 (\lambda_{k} -z)} \right), \label{convergenceterm7}
\end{align}
with obvious notation.

Let us first prove \eqref{convergenceterm1}. If
$L \cap (\ell_0, \infty) = \emptyset$, then $L \cap (\ell_0 - \epsilon, \infty) = \emptyset$
for some $\epsilon > 0$ (recall that $\ell_0 \notin L$). Hence, for all $A > \ell_0$,
and $n$ large enough depending on $A$, $L_n \cap (\ell_0,A] = \emptyset$, which implies,
for $|z| \leq \ell_0/2$,
\begin{align*}
\left|\mathds{1}_{L_n \cap (\ell_0, \infty) \neq \emptyset} \, \sum_{k \geq k_{n,\ell_0} } \frac{1}{\lambda_{n,k} (\lambda_{n,k} - z)} \right|
& \leq 2 \int_{\mathbb{R}} \frac{\mathbf{1} (\lambda > \ell_0)}{\lambda^2} \, d \Xi_n(\lambda)
\\ & = 2 \int_{\mathbb{R}} \frac{\mathbf{1} (\lambda > A)}{\lambda^2} \, d \Xi_n(\lambda)
\\ & \leq 2\int_{\mathbb{R}} \frac{\mathbf{1} (\lambda > A)}{\lambda^{1+\alpha}} \, d \Xi_n(\lambda) \leq 2 \tau_A.
\end{align*}
By letting $n \rightarrow \infty$ and then $A \rightarrow \infty$, one deduces  \eqref{convergenceterm1}.

Let us prove \eqref{convergenceterm2} and \eqref{convergenceterm3}. By using
the estimates \eqref{tau} and \eqref{minorizationlambda} proven above, one deduces
that for
$$\tilde{\tau} := \tau_1 + \Xi([\lambda^* \wedge 2, 2]) + \sup_{n \geq 1}  \Xi_n([\lambda^* \wedge 2, 2]),$$
one has, for any $k \geq 1$,
\begin{equation}
\lambda_{k} \geq (k/\tilde{\tau})^{1/(1+\alpha)} \label{minorizationlambda2},
\end{equation}
if $\lambda_{k}> 2$, and uniformly in $n$,
\begin{equation}
\lambda_{k,n} \geq (k/\tilde{\tau})^{1/(1+\alpha)} \label{minorizationlambda3},
\end{equation}
if $\lambda_{k,n} > 2$. Now, let us assume that $L \cap (\ell_0, \infty) = \emptyset$ and
$L_n \cap (\ell_0, \infty) \neq \emptyset$. If $n$ is large enough, then for any index
$k$ such that $\lambda_{n,k} > \ell_0$, one has also $\lambda_{n,k} > \lambda^* \vee 2$,
since $L \cap (\ell_0-\epsilon, (\lambda^* \vee 2) + 1) =  \emptyset$ for some
$\epsilon > 0$, and $\Xi_{n} \rightarrow \Xi$. Hence,
$k \geq 1$ and \eqref{minorizationlambda3} is satisfied. By using
this inequality and \eqref{e:gammatight}, one deduces, for $|z| \leq \ell_0/2$,
\begin{align*}
& \left|\mathds{1}_{L_n \cap (\ell_0, \infty) \neq \emptyset} \, \frac{\Delta_{n,K_{n,\infty}}}{ \lambda_{n, K_{n,\infty}-1} - z } \right|   +
\left|\mathds{1}_{L_n \cap (\ell_0, \infty) \neq \emptyset} \, \frac{\Delta_{n,k_{n,\ell_0}}}{
 \lambda_{n, k_{n,\ell_0}} - z }\right| \\ &
  \leq \sup_{k \geq 1} \frac{2 c (k+1)^{\alpha'}} {(k/\tilde{\tau})^{1/(1+\alpha)} \vee  \lambda_{n, k_{n,\ell_0}} }
 +\sup_{k \geq 1} \frac{2 c k^{\alpha'}} {(k/\tilde{\tau})^{1/(1+\alpha)} \vee  \lambda_{n, k_{n,\ell_0}} }
\\ & \leq 4c (1+2^{\alpha'}) (1+\tilde{\tau})^{1/(1+\alpha)} \sup_{k \geq 1} \frac{k^{\alpha'}} {k^{1/(1+\alpha)} \vee  \lambda_{n, k_{n,\ell_0}} } \\ &  =  4c (1+2^{\alpha'}) (1+\tilde{\tau})^{1/(1+\alpha)}\sup_{k \geq 1} \frac{(k^{1/(1+ \alpha)})^{\alpha'(1+\alpha)}} {k^{1/(1+\alpha)} \vee  \lambda_{n, k_{n,\ell_0}} } \\ &  \leq  4c (1+2^{\alpha'}) (1+\tilde{\tau})^{1/(1+\alpha)}\sup_{k \geq 1}
 (k^{1/(1+\alpha)} \vee  \lambda_{n, k_{n,\ell_0}})^{\alpha'(1+\alpha) - 1}
 \\ & \leq  4c (1+2^{\alpha'}) (1+\tilde{\tau})^{1/(1+\alpha)}\lambda_{n, k_{n,\ell_0}}^{\alpha'(1+\alpha) - 1},
 \end{align*}
 where $\lambda_{n, k_{n,\ell_0}}$ is taken equal to $\infty$ for
 $L_n \cap (\ell_0, \infty) = \emptyset$. Note that in the previous computation, the last inequality is a consequence of the inequality $\alpha'(1+\alpha) - 1 <0$. Now, $\lambda_{n, k_{n,\ell_0}}$ tends to infinity with
 $n$, since for all $A > \ell_0$, one has $L_n \cap (\ell_0, A] = \emptyset$ for $n$ large enough. Hence,
 we get \eqref{convergenceterm2} and \eqref{convergenceterm3}.

 Moreover, in case where $L \cap (\ell_0, \infty) = \emptyset$,
  $L_n \cap (\ell_0, \infty) \neq \emptyset$, $n$ is large enough, and $|z| < \ell_0/2$, the left-hand side of \eqref{convergenceterm4}
 is smaller than or equal to:
 \begin{align*}
4 c & \sum_{ k \geq  k_{n,\ell_0}+1} |k|^{\alpha'} \left(\frac{\lambda_{n,k}
 - \lambda_{n,k-1}}{\lambda_{n,k-1}
 \lambda_{n,k} } \right)  =
 4 c  \sum_{ k \geq  k_{n,\ell_0}+1} |k|^{\alpha'} \left(\frac{1}{\lambda_{n,k-1}}
 -\frac{1}{\lambda_{n,k}} \right)
 \\ & = 4c \left(\frac{|k_{n, \ell_0}+1|^{\alpha'}}{\lambda_{n,k_{n, \ell_0}}}
 +  \sum_{ k \geq  k_{n,\ell_0}+1} \frac{|k+1|^{\alpha'} - |k|^{\alpha'}}{\lambda_{n,k}} \right)
\end{align*}
$$ \leq 4c \left(\frac{|k_{n, \ell_0}+1|^{\alpha'}}{\lambda_{n,k_{n, \ell_0}} \vee
 (|k_{n, \ell_0}|/\tilde{\tau})^{1/(1+\alpha)} }
 +  \sum_{ k \geq  1} \frac{(k+1)^{\alpha'} - k^{\alpha'}}{\lambda_{n,k_{n, \ell_0}}
 \vee (k/\tilde{\tau})^{1/(1+\alpha)}} \right),$$
 when $k_{n, \ell_0} \geq 1$, which occurs for $n$ large enough.  The first term of the last quantity is dominated by
 $$(\lambda_{n,k_{n, \ell_0}} \vee (k_{n, \ell_0}/\tilde{\tau})^{1/(1+\alpha)})^{\alpha' (1+\alpha)-1}
 \leq  (\lambda_{n,k_{n, \ell_0}})^{\alpha'(1+\alpha)-1},$$
 which tends to zero when $n$ goes to infinity, since $\lambda_{n,k_{n, \ell_0}}$ goes to infinity
 and $\alpha'(1+\alpha)-1 < 0$. Similarly,
 $$ \sum_{ k \geq  1} \frac{(k+1)^{\alpha'} - k^{\alpha'}}{\lambda_{n,k_{n, \ell_0}}
 \vee (k/\tilde{\tau})^{1/(1+\alpha)}} \underset{n \rightarrow \infty}{\longrightarrow} 0,$$
 by dominated convergence. Hence, we get \eqref{convergenceterm4}.

We can now assume $L \cap (\ell_0, \infty) \neq \emptyset$ and it remains to prove \eqref{convergenceterm5},
\eqref{convergenceterm6} and \eqref{convergenceterm7}.

For $k \geq k_{\ell_0}$, let us define $\lambda_{n,k}$ and $\lambda_{k}$ as $\infty$ if these
numbers are not well-defined: this does not change the quantities involved in \eqref{convergenceterm5}.
Moreover, for all $k \geq k_{\ell_0}$:
\begin{itemize}
\item If $\lambda_{k}$ is well-defined as a finite quantity,
then $\lambda_{n,k}$ is also well-defined for $n$ large enough and tends to $\lambda_{k}$ when
$n$ goes to infinity.
\item If $\lambda_k = \infty$, then for all $A >0$,
and for $n$ large enough, one has $\lambda_{n,k} \notin [-A,A]$. Since for $n$ large enough,
$$\lambda_{n,k} \geq \lambda_{n, k_{\ell_0}}   > \lambda_{k_{\ell_0}} -1 > \ell_0 - 1 > 0,$$
one has  $\lambda_{n,k} > A$: in other words, $\lambda_{n,k}$ tends to infinity with $n$.
 \end{itemize}
 We have just checked that with the convention made here, one has always
 $\lambda_{n,k}$ converging to $\lambda_{k}$ when $n$ goes to infinity, for all $k \geq k_{\ell_0}$.
 Hence, \eqref{convergenceterm5} is a consequence of the dominated convergence theorem and
 the majorization:
 $$\sum_{k \geq k_{\ell_0}} (\lambda_k \wedge \inf_{n \geq n_0} \lambda_{n,k})^{-2} < \infty,$$
 for some $n_0 \geq 1$.
 Now, there exists $n_0 \geq 1$ such that for all $n \geq n_0$, one has $k_{n,\ell_0} = k_{\ell_0}$, and then for
 all $k \geq 1 \vee k_{\ell_0}$, $\lambda_{k} > \ell_0 > 2$, $\lambda_{n,k} >2$ and $k \geq 1$, which
  implies the minorizations \eqref{minorizationlambda2} and \eqref{minorizationlambda3}.
  Hence one gets \eqref{convergenceterm5}, since
   $$\sum_{k \geq 1} (k/\tilde{\tau})^{-2/(1+\alpha)} < \infty.$$
   Since \eqref{convergenceterm6} is easy to check, it remains to show \eqref{convergenceterm7}, which can
   be rewritten as follows:
 $$  \sum_{ k \geq  k_{\ell_0}+1} \Delta_{n,k} \left(\frac{1}{\lambda_{n,k-1} -z}
 - \frac{1}{\lambda_{n,k} - z} \right)
 \underset{n \rightarrow \infty}{\longrightarrow}
 \sum_{ k \geq  k_{\ell_0}+1} \Delta_{k} \left(\frac{1}{\lambda_{k-1} -z}
 - \frac{1}{\lambda_{k} - z} \right),
  $$
  where for $k \geq K_{n,\infty}$ (resp. $k \geq K_{\infty}$), one defines $\lambda_{k,n} := \infty$
  (resp. $\lambda_{k} := \infty$). Note that with this convention,
  $\lambda_{n,k}$ tends to $\lambda_k$ when $n$ goes to infinity, for all $k \geq k_{\ell_0}$.
  Note that each term of the left-hand side of this last convergence converges uniformly
  on $\{z \in \mathbb{C}, |z| < \ell_0/2\}$ towards the corresponding term in the
  right-hand side. Indeed, for $n$ large enough, for all $k \geq k_{\ell_0}$, for $|z| < \ell_0/2$,
  and for $\lambda_k$, $\lambda_{n,k}$ finite,
 \begin{align*}
 \left|\frac{1}{\lambda_{n,k} - z} - \frac{1}{\lambda_{k} - z} \right|
 & = \frac{|\lambda_{k} - \lambda_{n,k}|}{|\lambda_{n,k} - z||\lambda_{k} - z|}
 \leq \frac{4 |\lambda_{k} - \lambda_{n,k}|}{\lambda_{n,k} \lambda_{k} }
 \\ & \leq 4 \left| \frac{1}{\lambda_{n,k} } - \frac{1}{\lambda_{k}}\right|
 \underset{n \rightarrow \infty}{\longrightarrow} 0,
 \end{align*}
 this convergence, uniform in $z$, being in fact also true if $\lambda_{n,k}$ or $\lambda_k$ is infinite.
 Hence, one has, for all $k' > k_{\ell_0}+1$, the uniform convergence:
 $$  \sum_{ k_{\ell_0}+1 \leq k \leq k'} \Delta_{n,k} \left(\frac{1}{\lambda_{n,k-1} -z}
 - \frac{1}{\lambda_{n,k} - z} \right)
 \underset{n \rightarrow \infty}{\longrightarrow}
 \sum_{k_{\ell_0}+1 \leq k \leq k'} \Delta_{k} \left(\frac{1}{\lambda_{k-1} -z}
 - \frac{1}{\lambda_{k} - z} \right),
  $$
   Hence, it is sufficient to check, for $n_0 \geq 1$ such that $k_{n,\ell_0} =k_{\ell_0}$ if
 $n \geq n_0$, that
\begin{equation}
 \sup_{n \geq n_0, |z| < \ell_0/2}
 \left| \sum_{ k  > k'} \Delta_{n,k} \left(\frac{1}{\lambda_{n,k-1} -z}
 - \frac{1}{\lambda_{n,k} - z} \right)  \right| \underset{k' \rightarrow \infty}{\longrightarrow} 0
 \label{convergencek'1}
\end{equation}
and
\begin{equation}
 \sup_{|z| < \ell_0/2}
 \left| \sum_{ k  > k'} \Delta_{k} \left(\frac{1}{\lambda_{k-1} -z}
 - \frac{1}{\lambda_{k} - z} \right) \right| \underset{k' \rightarrow \infty}{\longrightarrow} 0.
 \label{convergencek'2}
\end{equation}
Now, for $k' \geq 1 \vee (k_{\ell_0} + 1)$, $n \geq n_0$ and $|z| < \ell_0/2$, one has
\begin{align*}
 \left| \sum_{ k  > k'} \Delta_{n,k} \left(\frac{1}{\lambda_{n,k-1} -z}
 - \frac{1}{\lambda_{n,k} - z} \right)  \right| &
 \leq \sum_{ k  > k'} |\Delta_{n,k}| \, \left|\frac{1}{\lambda_{n,k-1} -z}
 - \frac{1}{\lambda_{n,k} - z}  \right|
 \\ & \leq 4 c \sum_{ k  > k'} k^{\alpha'}  \, \left( \frac{1}{\lambda_{n,k-1}}
 - \frac{1}{\lambda_{n,k}} \right)
 \\ & = 4 c \left( \frac{(k'+1)^{\alpha'}}{\lambda_{n,k'}} +
 \sum_{k > k'}  \frac{(k+1)^{\alpha'} - k^{\alpha'}}{\lambda_{n,k}}  \right)
 \\ &  \leq 4 c \left( \frac{(k'+1)^{\alpha'}}{(k'/\tilde{\tau})^{1/(1+\alpha)}} +
 \sum_{k > k'}  \frac{(k+1)^{\alpha'} - k^{\alpha'}}{(k/\tilde{\tau})^{1/(1+\alpha)}}   \right)
 \\ & \underset{k' \rightarrow \infty}{\longrightarrow} 0,
\end{align*}
  which proves \eqref{convergencek'1}. One shows \eqref{convergencek'2} in an exactly similar way,
  which finishes the proof of the convergence of $S_n$ towards $S+\bar{\gamma}h$, uniformly on compact sets for the
  distance $d$. It remains to prove that $\Xi'_{n}$ and $\Xi'$ satisfy the assumptions from \eqref{e:lambda1} to
\eqref{e:hlim}. Note that the observation of the sign of the imaginary parts $\Im(S_n)$ and $\Im(S)$ implies
 that the sets $\Xi'_n$ and $\Xi'$ are included in $\mathbb{R}$. Moreover, the derivatives $S_n'$ and $S'$
 are strictly positive, respectively on $\mathbb{R} \backslash L_n$ and $\mathbb{R} \backslash L$, and
 all the left (resp. right) limits of $S_n$ and $S$ at their poles are equal to $+ \infty$ (resp. $-\infty$).
 We deduce that the support of $\Xi'_n$ (resp. $\Xi'$) strictly interlaces with the points
 in $L_n$ (resp. $L$).

 The convergence \eqref{e:lambda1} is a direct consequence of the convergence of
$S_n$ towards $S+\bar{\gamma}h$, as written in the statement of Theorem \eqref{topologytheorem}. More
precisely, for two points $a$ and $b$ ($a < b$) not in the support of $\Xi'$
and such that $\Xi'((a,b)) = k$, there exist real numbers $a = q_0 < q_1 < q_2 < \dots < q_{2k}< b = q_{2k+1}$ such that $-\infty < S(q_{2j-1}) < h' < S(q_{2j}) < \infty$ for $ j \in \{1, \dots, k\}$, which implies that
these inequalities are also satisfied for $S_n - \bar{\gamma}h$ instead of $S$ if $n$ is large enough:
one has $\Xi'_n((a,b)) \geq k$. On the other hand, since the support of $\Xi'$
has exactly one point on each interval $[q_{2j-1}, q_{2j}]$ ($1 \leq j \leq k$) and no point on
the intervals $[q_{2j},q_{2j+1}]$ ($0 \leq j \leq k$), one deduces that $S$ is bounded
on the intervals $[q_{2j-1}, q_{2j}]$ and bounded away from $h'$ on the intervals $[q_{2j},q_{2j+1}]$.
These properties remain true for $S_n - \bar{\gamma}h$ if $n$ is large enough, and one
easily deduces that $\Xi'_n((a,b)) \leq k$.

The properties \eqref{e:lambdasquared}, \eqref{e:lambdasquared2} and \eqref{e:hlim} can be deduced from the property of
interlacing.
More precisely, for $\ell \geq 1$,
$$\int_{\mathbb{R}} \frac{\mathbf{1}(\lambda > \ell)}{\lambda^{1+\alpha}} \, d \Xi'_n(\lambda)
= \sum_{\lambda \in L'_n \cap (\ell, \infty)} \frac{1}{\lambda^{1+\alpha}},$$
where $L'_n$ is the support of $\Xi'_n$. By the interlacing property,
if $L'_n \cap (\ell, \infty)$ is not empty and if its smallest element is
$\lambda' > \ell$, then it is possible to define an injection
 between  $(L'_n \cap (\ell, \infty) ) \backslash \{\lambda'\}$ and
 $L_n \cap (\ell, \infty)$, such that the image of each point is smaller than this point.
 One deduces
 $$\int_{\mathbb{R}} \frac{\mathbf{1}(\lambda > \ell)}{\lambda^{1+\alpha}} \, d \Xi'_n(\lambda)
 \leq \frac{1}{\lambda'} + \sum_{\lambda \in L_n \cap (\ell, \infty)} \frac{1}{\lambda^{1+\alpha}}
 \leq \frac{1}{\ell} + \int_{\mathbb{R}} \frac{\mathbf{1}(\lambda > \ell)}{\lambda^{1+\alpha}} \, d \Xi_n(\lambda).$$
 By looking similarly at the integral for $\lambda  < -\ell$, one deduces
 $$\int_{\mathbb{R}} \frac{\mathbf{1}(|\lambda| > \ell)}{|\lambda|^{1+\alpha}} \, d \Xi'_n(\lambda)
  \leq \frac{2}{\ell} + \int_{\mathbb{R}} \frac{\mathbf{1}(|\lambda| > \ell)}{|\lambda|^{1+\alpha}} \, d \Xi_n(\lambda) \leq \frac{2}{\ell} +\tau_{\ell} \underset{\ell \rightarrow \infty}{\longrightarrow} 0,$$
  which proves \eqref{e:lambdasquared} for the measure $\Xi'_n$.

  By a similar argument, for $\ell'' > \ell' > \ell \geq 1$,
 $$\int_{\mathbb{R}}  \frac{\mathbf{1}(\ell' \leq \lambda< \ell'')}{\lambda} \, d \Xi'_n(\lambda)
 \leq \int_{\mathbb{R}}  \frac{\mathbf{1}(\ell' \leq  \lambda< \ell'')}{\lambda} \, d \Xi_n(\lambda)
 + \frac{1}{\ell'},$$
 and one has the similar inequalities obtained by exchanging $\Xi_n$ and $\Xi'_n$, and
 by changing the sign of $\lambda$.
 Hence,
 $$ \left|\int_{\mathbb{R}}  \frac{\mathbf{1}(\ell' \leq |\lambda|< \ell'')}{|\lambda|} \, d \Xi'_n(\lambda)
 - \int_{\mathbb{R}}  \frac{\mathbf{1}(\ell' \leq |\lambda|< \ell'')}{|\lambda|} \, d \Xi_n(\lambda)
 \right| \leq \frac{4}{\ell'}.
$$
 This inequality and the existence of the limit $h_{n,\ell}$ for the measure
 $\Xi_n$ implies that
 $$\underset{\ell' \wedge \ell'' \rightarrow \infty}{\limsup}
 \left|\int_{\mathbb{R}}  \frac{\mathbf{1}(\ell <  |\lambda|< \ell'')}{|\lambda|} \, d \Xi'_n(\lambda)
 - \int_{\mathbb{R}}  \frac{\mathbf{1}(\ell <  |\lambda|< \ell')}{|\lambda|} \, d \Xi'_n(\lambda) \right|
 = 0,$$
and then the limit given by \eqref{e:lambdasquared2} exists for the measure $\Xi'_n$.
Moreover, for all $\ell \geq 1$, one gets the majorization:
\begin{equation}\left|\underset{\ell' \rightarrow \infty}{\lim}
 \int_{\mathbb{R}}  \frac{\mathbf{1}(\ell <  |\lambda|< \ell')}{|\lambda|} \, d \Xi'_n(\lambda)
 - \underset{\ell' \rightarrow \infty}{\lim}
 \int_{\mathbb{R}}  \frac{\mathbf{1}(\ell <  |\lambda|< \ell')}{|\lambda|} \, d \Xi_n(\lambda)
 \right| \leq \frac{4}{\ell},
 \label{xxxx2017}
 \end{equation}
 and a similar inequality without the index $n$. This implies \eqref{e:hlim}, provided that we check the
 existence of the limit:
\begin{equation}
 \lim_{n \rightarrow \infty}  \underset{\ell' \rightarrow \infty}{\lim}
 \int_{\mathbb{R}}  \frac{\mathbf{1}(\ell <  |\lambda|< \ell')}{|\lambda|} \, d \Xi'_n(\lambda) \label{existencelimit}
 \end{equation}
 for each $\ell$ such that $\ell$ and $-\ell$ are not in the support of  $\Xi'$. Let us first assume that $\ell$ and
 $-\ell$ are also not in the
 support of $\Xi$.
 We have, for all $\ell'' > \ell$,
\begin{equation}
\label{xxxx2018}\underset{\ell' \rightarrow \infty}{\lim}
 \int_{\mathbb{R}}  \frac{\mathbf{1}(\ell <  |\lambda|< \ell')}{|\lambda|} \, d \Xi'_n(\lambda)
 = \int_{\mathbb{R}}  \frac{\mathbf{1}(\ell <  |\lambda| \leq \ell'')}{|\lambda|} \, d \Xi'_n(\lambda)
 +  \underset{\ell' \rightarrow \infty}{\lim}
 \int_{\mathbb{R}}  \frac{\mathbf{1}(\ell''<  |\lambda|< \ell')}{|\lambda|} \, d \Xi'_n(\lambda),
 \end{equation}
 and a similar equality with $\Xi'_n$ replaced by $\Xi_n$.
 Since $\ell$ and $-\ell$ are not in the support of $\Xi$ or $\Xi'$, the convergences of
$\Xi_n$ towards $\Xi$ and of $\Xi'_n$ towards $\Xi'$ imply that the lower and upper limits (when $n$ goes to infinity)
of the first term of \eqref{xxxx2018} (both with $\Xi'_n$ and with $\Xi_n$) differ by $O(1/\ell'')$.
For the second term, the difference between the lower and upper limits should change only by
$O(1/\ell'')$ when we replace \eqref{xxxx2018} by the same equation with $\Xi_n$, thanks to
 \eqref{xxxx2017}. Hence, this observation is also true for the sum of the two terms.
 On the other hand, the existence of the limit of $h_{n,\ell}$ when $n$ goes to infinity (for $\Xi_n$) implies that
 in \eqref{xxxx2018} with $\Xi'_n$ replaced by $\Xi_n$, the difference between the upper and lower limit is zero.
 Therefore, the difference is
 $O(1/\ell'')$ without replacement of $\Xi'_n$ by $\Xi_n$ : letting $\ell'' \rightarrow 0$ gives the existence of the  limit
 \eqref{existencelimit} for $-\ell, \ell$ not in the support of $\Xi$ and $\Xi'$. If $-\ell$ or $\ell$
 is in the support of $\Xi$ (but not in the support of $\Xi'$), we observe that for some $\epsilon > 0$ and
  $n$ large enough,
 there is no point in the supports of $\Xi'_n$ and $\Xi'$ in the intervals $\pm \ell + (-\epsilon, \epsilon)$, which implies
 that the integral involved in \eqref{existencelimit} does not change if we change $\ell$ by less than $\epsilon$.  By suitably moving $\ell$, we can then also avoid the support of $\Xi$.

\end{proof}

 \section{Convergence of Hermite corners towards the bead process}
 
 In this section, we consider, for all $\beta > 0$, the Gaussian $\beta$ Ensemble, defined 
 as a a set of $n$ points 
 $(\lambda_j)_{1 \leq j \leq n}$ whose  joint density, with respect to the Lebesgue measure
is  proportional to $$e^{-\beta \sum_{k=1}^n \lambda_k/4}
\prod_{j < k} |\lambda_j - \lambda_k|^{\beta}.$$

 We will use the following crucial estimate, proven in \cite{NV19}:
\begin{theorem} \label{boundvariance2017}
For $-\infty \leq \Lambda_1 < \Lambda_2 \leq \infty$, let $N(\Lambda_1, \Lambda_2)$
be the number of points, between $\Lambda_1$ and $\Lambda_2$, of a Gaussian beta ensemble with $n$
points, and let $N_{sc}(\Lambda_1, \Lambda_2)$
be $n$ times the measure of $(\Lambda_1, \Lambda_2)$
with respect to the semi-circle distribution on the interval
$[- 2\sqrt{n}, 2 \sqrt{n}]$:
$$N_{sc}(\Lambda_1, \Lambda_2) :=
\frac{n}{2\pi} \int_{\Lambda_1/\sqrt{n}}^{\Lambda_2/\sqrt{n}}
\sqrt{(4 - x^2)_+}  \,  dx.
$$
Then,
$$\mathbb{E} [(N(\Lambda_1, \Lambda_2) - N_{sc}(\Lambda_1, \Lambda_2))^2] = O( \log (2 + (\sqrt{n}(\Lambda_2 - \Lambda_1) \wedge n))).$$
\end{theorem}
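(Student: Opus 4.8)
The plan is to pass to the Dumitriu--Edelman tridiagonal model and to express the counting function through Sturm oscillation, reducing the whole estimate to a two-point bound for the fluctuations of the (logarithm of the) characteristic polynomial in the bulk. Recall that the eigenvalues of the $\mathrm{G}\beta\mathrm{E}$ with $n$ points have the same joint law as those of a random Jacobi matrix $J_n$ with independent entries, Gaussian on the diagonal and (suitably rescaled) $\chi$-distributed off the diagonal with $\mathbb E[b_k^2]$ proportional to $n-k$. Writing $p_k(\lambda)=\det(\lambda-J_k)$ for the $k$-th leading principal minor and $r_k=p_k/p_{k-1}$, one has the recursion $r_k(\lambda)=(\lambda-a_k)-b_{k-1}^2/r_{k-1}(\lambda)$, and Sturm's theorem gives $N(-\infty,\lambda)=\#\{k:r_k(\lambda)<0\}$, so that $N(\Lambda_1,\Lambda_2)$ equals, up to a deterministic $O(1)$, the increment of a Pr\"ufer phase read off from the orbit of the transfer matrices on the projective line. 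Equivalently, and more convenient for second moments, for any $\eta>0$ one has $N(\Lambda_1,\Lambda_2)=\tfrac1\pi\,\Im\bigl[\log p_n(\Lambda_1+i\eta)-\log p_n(\Lambda_2+i\eta)\bigr]+\mathcal E_\eta$, where $\mathcal E_\eta$ is a Poisson-smoothing error controlled by the number of eigenvalues within distance $O(\eta)$ of $\Lambda_1$ or $\Lambda_2$.

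Next I would split the spectrum into three regimes. \textbf{(III) Exterior:} outside $[-2\sqrt n,2\sqrt n]$ there are, with overwhelming probability, at most $n^{o(1)}$ eigenvalues, so this region contributes $O(1)$ to the variance; this also handles the cases $\Lambda_1=-\infty$ and/or $\Lambda_2=+\infty$, since $N(-\infty,\Lambda_2)=n-N(\Lambda_2,+\infty)$ and $-\infty$ may be replaced by $-2\sqrt n-1$ up to such an error. \textbf{(II) Edge:} in the zones $|\Lambda_i|=2\sqrt n-O(n^{1/6+o(1)})$, rigidity of the extreme eigenvalues of the $\mathrm{G}\beta\mathrm{E}$ shows that the number of eigenvalues in any such zone fluctuates by $O(1)$. \textbf{(I) Bulk:} when $|\Lambda_i|\le(2-\varepsilon)\sqrt n$ the semicircle density $\varrho(\Lambda_i/\sqrt n)$ is bounded below; this is the heart of the matter. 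Here I choose $\eta$ at the local scale, $\eta\asymp(\sqrt n\,\varrho(\Lambda_i/\sqrt n))^{-1}$, so that the smoothing error $\mathcal E_\eta$ has $O(1)$ mean and, by bulk rigidity, $O(1)$ variance, and it remains to bound $\operatorname{Var}\bigl(\Im[\log p_n(z_1)-\log p_n(z_2)]\bigr)$ with $z_i=\Lambda_i+i\eta$.

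For this I would use the additive representation $\log p_n(z)=\sum_{k=1}^n\log r_k(z)$. In the bulk, after the natural $k$-dependent conjugation, the transfer matrices are close to elliptic rotations, so the Pr\"ufer angle $\arg r_k$ performs a Markov chain that mixes geometrically at a rate uniform over the bulk, thanks to the genuine randomness of $a_k,b_k$ (a Doeblin/Harris minorization on the angle); along the way one checks that $r_k$ stays away from $0$ and $\infty$, so that the increments $\log r_k$ are bounded. A martingale decomposition of the additive functional $\sum_k\bigl(\log r_k(z_1)-\log r_k(z_2)\bigr)$ then reduces the variance to a sum of conditional increment variances, and the decay of correlations yields the \emph{log-correlated} structure
\begin{equation*}
\operatorname{Var}\bigl(\Im\log p_n(z_1)\bigr)=\tfrac{c_\beta}{2}\log\tfrac1\eta+O(1),\qquad \operatorname{Cov}\bigl(\Im\log p_n(z_1),\Im\log p_n(z_2)\bigr)=\tfrac{c_\beta}{2}\log\tfrac1{|z_1-\bar z_2|}+O(1),
\end{equation*}
uniformly in the bulk and in $n$, with $c_\beta$ proportional to $1/\beta$. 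Since $|z_1-\bar z_2|\asymp|\Lambda_1-\Lambda_2|+\eta$, subtracting gives $\operatorname{Var}\bigl(\Im[\log p_n(z_1)-\log p_n(z_2)]\bigr)=\tfrac{c_\beta}{2}\log\tfrac{|z_1-\bar z_2|^2}{\eta^2}+O(1)=O\bigl(\log(2+|\Lambda_1-\Lambda_2|/\eta)\bigr)=O\bigl(\log(2+\sqrt n|\Lambda_1-\Lambda_2|)\bigr)$ in the bulk; the global cap $\wedge n$ (effectively $\wedge\log n$ after the logarithm) comes from $N(\Lambda_1,\Lambda_2)=n-N(-\infty,\Lambda_1)-N(\Lambda_2,+\infty)$ together with the half-line bound $\operatorname{Var}(N(-\infty,\Lambda))=O(\log n)$, itself the case $\Lambda_1=-\infty$ above.

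The main obstacle I expect is step (I): producing the two-point fluctuation bound that is simultaneously uniform over the whole bulk $|\Lambda_i|\le(2-\varepsilon)\sqrt n$ (with constants that do not blow up faster than the edge estimate of (II) can absorb as $\varepsilon$ shrinks), uniform in $n$, and sharp enough at the level of the \emph{covariance}, not just the marginal variance, to exhibit the cancellation producing $\log|\Lambda_1-\Lambda_2|$ rather than $2\log n$. The Pr\"ufer-chain mixing degrades near the edge, so the matching of regimes (I) and (II) has to be carried out carefully, presumably letting $\varepsilon$ depend mildly on $n$ and folding an intermediate zone into the edge analysis. An alternative I would keep in reserve is to bypass the tridiagonal model and invoke the loop (Dyson--Schwinger) equations for the $\mathrm{G}\beta\mathrm{E}$, which give $\operatorname{Var}\bigl(\sum_i f(\lambda_i/\sqrt n)\bigr)\approx\tfrac1\beta\|f\|_{H^{1/2}}^2$ up to lower order; mollifying $\mathbf 1_{(\Lambda_1,\Lambda_2)}$ at the microscopic scale $\delta\asymp 1/n$, using $\|\mathbf 1_{[a,b]}*\phi_\delta\|_{H^{1/2}}^2\asymp\log(|b-a|/\delta)\wedge\log(1/\delta)$, and controlling the mollification error by rigidity yields the same bound, at the price of importing substantially more machinery.
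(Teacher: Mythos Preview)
The paper does not prove this theorem at all: it is quoted verbatim as ``the following crucial estimate, proven in \cite{NV19}'' and then used as a black box in Section~8. So there is no proof in the paper to compare your attempt against; any comparison has to be with the companion paper.

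That said, what you have written is a roadmap, not a proof, and you say so yourself (``The main obstacle I expect is step (I)''). The strategy via the Dumitriu--Edelman tridiagonal model, Sturm oscillation/Pr\"ufer phase, and the log-correlated structure of $\Im\log p_n$ is the natural one and is in the spirit of what the companion paper does, but the step you flag as the obstacle is exactly the content of the theorem. Writing down
\[
\operatorname{Var}\bigl(\Im\log p_n(z_1)\bigr)=\tfrac{c_\beta}{2}\log\tfrac1\eta+O(1),\qquad \operatorname{Cov}\bigl(\Im\log p_n(z_1),\Im\log p_n(z_2)\bigr)=\tfrac{c_\beta}{2}\log\tfrac1{|z_1-\bar z_2|}+O(1)
\]
uniformly over the bulk and in $n$ is not something that follows from a generic ``Doeblin/Harris minorization on the angle'': the transfer matrices become parabolic at the edge, the mixing rate degenerates, and you need quantitative control of this degeneration to match regimes (I) and (II) without losing the $O(\log)$ bound. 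Your fallback via loop equations and mollified indicators is also standard but has the same genuine work hidden in ``controlling the mollification error by rigidity'': rigidity for general $\beta$ at the required precision is itself of comparable difficulty to the statement you are trying to prove.

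A smaller point: in regime (III) and for the edge, ``with overwhelming probability at most $n^{o(1)}$ eigenvalues'' is not by itself enough for a second-moment bound; you need tail estimates strong enough that the complementary event contributes $O(1)$ to $\mathbb E[(\cdot)^2]$, which again is real work for general $\beta$.

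In short: your outline is the right shape, and likely close in spirit to \cite{NV19}, but as written it defers precisely the hard analytic content to unproved assertions. If you want this to stand as a proof you must either carry out the uniform two-point Pr\"ufer estimate in the bulk with edge matching, or cite it.
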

For $\beta \in \{1,2,4\}$, the Gaussian $\beta$ Ensemble can be represented by the
  eigenvalues of  real symmetric (for $\beta = 1$), complex  Hermitian  (for $\beta = 2$), or quaternionic Hermitian (for $\beta = 4$) Gaussian matrices. The law of the entries
of these matrices, corresponding respectively to the Gaussian Orthogonal Ensemble, the Gaussian Unitary Ensemble and the Gaussian Symplectic Ensemble,  are given as follows:
\begin{itemize}
\item The diagonal entries are real-valued, centered, Gaussian with variance $2/\beta$.
\item The entries above the diagonal are real-valued for $\beta = 1$, complex-valued for $\beta = 2$, quaternion-valued for $\beta = 4$, with independent parts, centered, Gaussian with variance $1/\beta$.
\item All the entries involved in the previous items are independent.
\end{itemize}
By considering the top-left minors $A_n$ of an infinite random matrix $A$ following the law described just above, and their eigenvalues, we get a family of sets of points, the $n$-th set following the G$\beta$E of order $n$.
Conditionally on the matrix $A_n$, whose eigenvalues are denoted $(\lambda_1, \dots, \lambda_n)$, supposed to be distinct (this holds almost surely), the law of the eigenvalues of $A_{n+1}$ can be deduced by diagonalizing $A_n$ inside $A_{n+1}$, which gives a matrix of  the form
$$ \left( \begin{array}{ccccc}
\lambda_1 & 0  & \cdots & 0 &  g_1 \\
0 & \lambda_2 & \cdots & 0 &  g_2 \\
\vdots & \vdots & \ddots & \vdots & \vdots
\\ 0 & 0 & \cdots & \lambda_n & g_n \\
 \overline{g_1} & \overline{g_2} & \cdots
& \overline{g_n} & g \end{array} \right),$$
 where $g_1, \dots g_n, g$ are independent, centered Gaussian, $g$ being real-valued with variance $2/\beta$, $g_1, \dots, g_n$ being real-valued of variance $1$ for $\beta = 1$, complex-valued with independent real and imaginary parts of variance $1/2$ for $\beta = 2$, quaternion-valued with independent parts of variance $1/4$ for $\beta = 4$. 
 Expanding the characteristic polynomial and dividing by the product of $\lambda_j - z$ for $1 \leq j \leq \lambda_n$, we see that the eigenvalues of $A_{n+1}$ are the solutions of the equation:
 $$ g - z - \sum_{j=1}^n \frac{|g_j|^2}{ \lambda_j - z}  = 0.$$
 Hence, if for $n \geq 1$, we consider the eigenvalues of the matrices $(A_{n+k})_{k \geq 0}$, we
  get an inhomogeneous Markov chain defined as follows:
 \begin{itemize}
 \item The first set  corresponds to the $G \beta E$ with $n$ points.
 \item Conditionally on the sets of points indexed by $0, 1, \dots, k$, the set indexed by $k$ containing the distinct points $\lambda_1, \dots, \lambda_{n+k}$, the  set indexed by $k+1$ contains the zeros of
 $$g - z - \sum_{j=1}^{n+k} \frac{(2/\beta) \gamma_{j}}{ \lambda_j - z},$$
 $g$ being centered, Gaussian of variance $2/\beta$, $\gamma_j$ being a Gamma variable of parameter $\beta/2$, all these variables being independent.
 \end{itemize}
 This Markov chain can be generalized to all $\beta > 0$: this can be viewed as the "eigenvalues of the G$\beta$E minors".
 In fact, what we obtain is equivalent (with suitable scaling) to the Hermite $\beta$ corners introduced by Gorin and Shkolnikov in \cite{GS}. 
 This fact is due to the following result, proven (up to scaling) in \cite{Forrester}, Proposition 4.3.2: 
 
 \begin{proposition}
 The density of transition probability from the set $(\lambda_1, \dots, \lambda_n)$ to the set $(\mu_1, \dots, \mu_{n+1})$, subject to the interlacement property
$$\mu_1 < \lambda_1 < \mu_2 < \dots < \mu_{n} < \lambda_n < \mu_{n+1},$$
is proportional to 
 $$\prod_{1 \leq p < q \leq n+1} (\mu_q - \mu_p) \prod_{1 \leq p < q \leq n} (\lambda_q - \lambda_p)^{1- \beta}
 \prod_{1 \leq p \leq n, 1 \leq q \leq n+1} |\mu_q - \lambda_p|^{\beta/2 - 1}e^{- \frac{\beta}{4} \left( \sum_{1 \leq q \leq n+1} \mu_q^2 -  \sum_{1 \leq p \leq n} \lambda_p^2 \right)}.$$
 \end{proposition}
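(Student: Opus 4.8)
The plan is to obtain the transition density by a direct change of variables from the data of the Markov step. Conditionally on the distinct points $\lambda_1<\dots<\lambda_n$, the new points $\mu_1,\dots,\mu_{n+1}$ are the roots of $g-z-\sum_{j=1}^{n} w_j/(\lambda_j-z)=0$ with $w_j=(2/\beta)\gamma_j$, where $g\sim\mathcal N(0,2/\beta)$ and $\gamma_1,\dots,\gamma_n$ are i.i.d. $\Gamma(\beta/2)$, all independent. Clearing denominators turns this into the polynomial identity
$$(g-z)\prod_{k=1}^n(\lambda_k-z)-\sum_{j=1}^n w_j\prod_{k\neq j}(\lambda_k-z)=\prod_{q=1}^{n+1}(\mu_q-z).$$
Since $z\mapsto g-z-\sum_j w_j/(\lambda_j-z)$ has derivative $-1-\sum_j w_j/(\lambda_j-z)^2<0$ and runs from $+\infty$ to $-\infty$ on each interval between consecutive poles (and on the two unbounded intervals), the map $(w_1,\dots,w_n,g)\mapsto(\mu_1,\dots,\mu_{n+1})$ is a diffeomorphism of $(0,\infty)^n\times\mathbb R$ onto the open interlacing chamber $\{\mu_1<\lambda_1<\mu_2<\dots<\lambda_n<\mu_{n+1}\}$, and it suffices to push forward the explicit product law of $(w,g)$ through it.

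To invert the map, evaluate the polynomial identity at $z=\lambda_i$, which gives $w_i=\prod_{q}(\mu_q-\lambda_i)\big/\big(-\prod_{k\neq i}(\lambda_k-\lambda_i)\big)$; and expand the equivalent rational identity $g-z-\sum_j w_j/(\lambda_j-z)=\prod_q(\mu_q-z)\big/\prod_k(\lambda_k-z)$ at $z=\infty$, whose constant term gives $g=\sum_q\mu_q-\sum_k\lambda_k$ and whose coefficient of $z^{-1}$ gives the identity $\sum_j w_j=\tfrac12\big(\sum_q\mu_q^2-\sum_k\lambda_k^2-g^2\big)$, which will be responsible for a crucial cancellation below. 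From the formula for $w_i$ one reads $\partial w_k/\partial\mu_q=w_k/(\mu_q-\lambda_k)$, while $\partial g/\partial\mu_q=1$, so the Jacobian determinant equals $\big(\prod_{k=1}^n w_k\big)$ times the determinant of the $(n+1)\times(n+1)$ matrix with rows $\big(1/(\mu_q-\lambda_k)\big)_{q}$ for $k=1,\dots,n$ and a final row $(1,\dots,1)$. That determinant is a Cauchy determinant with one node sent to infinity: it equals $\prod_{k<l}(\lambda_k-\lambda_l)\prod_{p<q}(\mu_q-\mu_p)\big/\prod_{k,q}(\mu_q-\lambda_k)$ (obtained, e.g., as the limit $\lambda_{n+1}\to\infty$ of the classical Cauchy determinant after rescaling its last row by $-\lambda_{n+1}$). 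Substituting the explicit value of $\prod_k w_k$, the factors $\prod_{k,q}(\mu_q-\lambda_k)$ cancel, leaving a Jacobian of absolute value $\prod_{p<q}|\mu_q-\mu_p|\big/\prod_{k<l}|\lambda_l-\lambda_k|$.

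It remains to assemble. The joint density of $(w_1,\dots,w_n,g)$ is proportional to $\big(\prod_k w_k^{\beta/2-1}\big)\,e^{-(\beta/2)\sum_k w_k}\,e^{-(\beta/4)g^2}$. Substituting the inverse map, $\prod_k w_k^{\beta/2-1}$ becomes, up to a $\mu$-independent constant, $\prod_{k,q}|\mu_q-\lambda_k|^{\beta/2-1}\prod_{k<l}|\lambda_l-\lambda_k|^{2-\beta}$; and, using $\sum_j w_j=\tfrac12(\sum_q\mu_q^2-\sum_k\lambda_k^2-g^2)$, the two exponential factors combine, the term in $g^2$ cancelling exactly, into $e^{-(\beta/4)(\sum_q\mu_q^2-\sum_k\lambda_k^2)}$. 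Multiplying by the Jacobian and absorbing the two powers of the $\lambda$-Vandermonde ($2-\beta$ from $\prod w_k^{\beta/2-1}$ and $-1$ from the Jacobian) into a single factor of power $1-\beta$ yields precisely the asserted expression. The only steps requiring genuine care are the evaluation of the Cauchy determinant with a node at infinity and the power-sum identity $\sum_j w_j=\tfrac12(\sum\mu^2-\sum\lambda^2-g^2)$, which drives the cancellation of the $g^2$ term; everything else is bookkeeping. For $\beta\in\{1,2,4\}$ the outcome can also be cross-checked against the explicit bordering of Gaussian matrices from which the secular equation was derived.
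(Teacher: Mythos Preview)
Your argument is correct. The paper does not actually prove this proposition; it simply cites Forrester's book (Proposition 4.3.2 there, up to scaling). Your change-of-variables computation is the standard direct route and is carried out cleanly: the inversion formulas for $w_i$ and $g$ via evaluation at $z=\lambda_i$ and expansion at $z=\infty$ are right, the power-sum identity $\sum_j w_j=\tfrac12(\sum_q\mu_q^2-\sum_k\lambda_k^2-g^2)$ is exactly what produces the cancellation of the $e^{-(\beta/4)g^2}$ factor, and the Cauchy-determinant-with-a-node-at-infinity evaluation of the Jacobian is correct (the sign ambiguities are harmless since only $|\det J|$ enters). Combining the exponent $2-\beta$ on the $\lambda$-Vandermonde coming from $\prod_k w_k^{\beta/2-1}$ with the exponent $-1$ from the Jacobian indeed gives the stated $1-\beta$. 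So you have supplied a self-contained proof where the paper only gives a reference.
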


As explained in \cite{GS}, the marginals of the Hermite $\beta$ corner correspond to the Gaussian $\beta$ Ensemble, which implies the following:
\begin{proposition}
 For all $\beta > 0$, the set of $n+k$ points corresponding to the step $k$ of the Markov chain just above has the distribution of the Gaussian $\beta$ Ensemble of dimension $n+k$.
In particular, if we take $n=1$, we get a coupling of the $G \beta E$ in all dimensions.
  \end{proposition}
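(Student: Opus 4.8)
The plan is to recognize the inhomogeneous Markov chain constructed above as the tower of levels $n,n+1,n+2,\dots$ of the Hermite $\beta$ corner of \cite{GS}, and then to quote the fact — established in \cite{GS} — that each level of the corner is, as a marginal, a Gaussian $\beta$ Ensemble of the corresponding dimension.

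First I would pin down the one-step transition kernel. Fix $m\ge 1$ and a set $\Lambda=(\lambda_1,\dots,\lambda_m)$ with distinct entries; by definition of the chain, given $\Lambda$, the next set consists of the zeros of $z\mapsto g-z-\sum_{j=1}^m (2/\beta)\gamma_j/(\lambda_j-z)$ with $g$ centered Gaussian of variance $2/\beta$ and $\gamma_1,\dots,\gamma_m$ i.i.d.\ Gamma$(\beta/2)$, independent of $\Lambda$. These zeros interlace with $\Lambda$, and the Jacobian of the change of variables $(g,\gamma_1,\dots,\gamma_m)\mapsto(\mu_1,\dots,\mu_{m+1})$ gives their conditional density; this is exactly the computation recorded (up to scaling) in \cite{Forrester}, Proposition~4.3.2, which is the proposition stated just above. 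I would call the resulting conditional law $P_m(\Lambda,d\mu)$ and note that it is precisely the level-$m$-to-level-$(m+1)$ transition of the Hermite $\beta$ corner, depending only on $m$.

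Next I would match the whole chain with the corner. By construction the density of the Hermite $\beta$ corner, restricted to levels $\ge n$, factorizes as the density of G$\beta$E$_n$ on the level-$n$ set times the product of the transitions $P_n,P_{n+1},\dots$ along consecutive levels — i.e.\ the corner, read from level $n$ up, is the Markov chain started from G$\beta$E$_n$ with transition kernels $(P_m)_{m\ge n}$. Our chain has exactly this description (step $0$ is G$\beta$E$_n$ by construction, and by the previous paragraph the step-$k$-to-step-$(k+1)$ transition is $P_{n+k}$), so the joint law of the sets at steps $0,1,2,\dots$ agrees with the joint law of the levels $n,n+1,n+2,\dots$ of the corner. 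In particular the step-$k$ set is distributed as the level-$(n+k)$ set, which by the marginal property from \cite{GS} is G$\beta$E$_{n+k}$; the case $n=1$ is the claimed coupling.

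The main obstacle is essentially bookkeeping rather than depth: Step~1 is the only place something is actually computed, and that computation is classical and already in \cite{Forrester}. The point needing genuine care is reconciling our normalizations (variance $2/\beta$ for $g$, prefactor $2/\beta$ on $\gamma_j$, parameter $\beta/2$ for the Gammas) with the "up to scaling" version of Forrester's proposition, so that the constant $\beta/4$ in the Gaussian factor and the exponents $1-\beta$ and $\beta/2-1$ emerge exactly. For $\beta\in\{1,2,4\}$ this can be skipped altogether: the top-left $m\times m$ minor of the infinite GOE/GUE/GSE matrix $A$ is a matrix from the same ensemble, so its eigenvalues are automatically distributed as G$\beta$E$_m$, with no appeal to the corner.
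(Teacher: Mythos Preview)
Your proposal is correct and follows essentially the same route as the paper: the paper does not give a detailed proof but simply notes that, by Forrester's computation of the transition density, the chain coincides with the Hermite $\beta$ corner of \cite{GS}, whose level-$m$ marginals are G$\beta$E$_m$ by \cite{GS}. Your write-up is in fact more explicit than the paper's one-line justification, but the underlying argument is identical.
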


Now, we show that a suitable scaling limit of this Markov chain is the $\beta$-bead process introduced in the paper.

We choose $\alpha \in (-2,2)$ (this corresponds to the bulk of the spectrum), $n \geq 1$, and we center the spectrum around the level $\alpha \sqrt{n}$. The expected density of eigenvalues around this level is approximated by
$\sqrt{n} \rho_{sc}(\alpha)$, where $\rho_{sc}$ is the density of the semi-circular distribution. In order to get an average spacing of $2\pi$, we should then scale the eigenvalues by a factor $2 \pi \sqrt{n} \rho_{sc}(\alpha) = \sqrt{n(4 - \alpha^2)}$.
For $k \geq 0$, we then consider the simple point measure $\Xi_n^{(k)}$ given by
putting Dirac masses at the points
$( \lambda^{(n,k)}_j - \alpha \sqrt{n}) \sqrt{n (4 - \alpha^2)}$, where $(\lambda^{(n,k)}_j)_{1 \leq j \leq n+k}$ is the set of $n+k$ points obtained at the  step $k$ of  the Markov chain above.
The sequence of measures $\Xi_n^{(k)}$  can be recovered as follows:
\begin{itemize}
\item For $k = 0$, $\Xi_n^{(0)}$ corresponds to the point measure associated with the suitably rescaled G$\beta$E point process, with $n$ points.
\item Conditionally on $\Xi_n^{(k)}$,
$\Xi_n^{(k+1)}$ is obtained by taking the zeros of
$$ - \frac{\alpha}{ \sqrt{4 - \alpha^2}}
+ \frac{g^{(k)}}{\sqrt{n(4 - \alpha^2)}}
- \frac{z}{n(4-\alpha^2)}
- \int \frac{1}{ \lambda- z} d \Lambda_n^{(k)} (\lambda),$$
where $g^{(k)}$ is a centered Gaussian variable of variance $2/\beta$, and $\Lambda_n^{(k)}$ is the weighted version of $\Xi_n^{(k)}$, the weights being i.i.d. with distribution corresponding to $2/\beta$ times a Gamma variable of parameter $\beta/2$.
\end{itemize}
We are now able to prove the following result:
\begin{theorem}
The Markov chain $(\Xi_n^{(k)})_{k \geq 0}$ converges in law to the Markov chain defined in Theorem \ref{betainfinite},
for the topology of locally weak convergence of locally finite measures on $\mathbb{R} \times \mathbb{N}_0$, and for
the level $$ h = - \frac{\alpha}{ \sqrt{4 - \alpha^2}}.$$
For $\beta = 2$ and $h \in \mathbb{R}$ fixed, the law of the Markov chain of Theorem \ref{betainfinite} corresponds (after dividing the points by $2$) to the bead process introduced
by Boutillier, with parameter
$$\gamma = - \frac{h}{\sqrt{1  + h^2}},$$
if we take the notation of \cite{Bou}.
\end{theorem}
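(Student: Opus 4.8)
The plan is to recognise the rescaled Hermite $\beta$ corner chain $(\Xi_n^{(k)})_{k\ge 0}$, one step at a time, as an instance of the Stieltjes Markov chain of Section~4, and to push the convergence through Theorem~\ref{topologytheorem}. Fix a base point $\lambda^*$, generic with probability one, for the weight labelling. By the bulk limit for the Gaussian $\beta$ ensemble \cite{VV}, $\Xi_n^{(0)}$ converges locally weakly to $\mathbb Q_\beta$; by Skorokhod's theorem we may put all the objects on one probability space on which this convergence, the convergence of the weight arrays (coupled so that $\gamma_{n,k}=\gamma_k$ identically, using one i.i.d.\ $(2/\beta)\,\mathrm{Gamma}(\beta/2)$ array at each step), and the vanishing of the Gaussian terms $g^{(k)}/\sqrt{n(4-\alpha^2)}$ all hold almost surely. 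One then checks the hypotheses of Theorem~\ref{topologytheorem}: condition \eqref{e:gammatight} holds with any $\alpha'>1/2$ by the law of the iterated logarithm for partial sums of the i.i.d.\ mean-one weights, and since the configurations have asymptotic density $1/2\pi$ one may take $\alpha>0$ as small as needed, so that $(1+\alpha)\alpha'<1$; the a.s.\ rigidity bound \eqref{e:lambdasquared}, uniform in $n$, and the existence of the truncated one-sided Stieltjes integrals follow from the variance estimate of Theorem~\ref{boundvariance2017} via a Chebyshev/Borel--Cantelli argument. The $z$-dependent lower-order terms are harmless: writing $\widetilde S_n(z):=\int(\lambda-z)^{-1}d\Lambda_n^{(0)}(\lambda)-g^{(0)}/\sqrt{n(4-\alpha^2)}+z/(n(4-\alpha^2))$, the function $\widetilde S_n$ has the same poles as and the same limit (uniformly on compacts for the chordal metric $d$) as the Stieltjes transform of Theorem~\ref{topologytheorem}, so the theorem applies verbatim to $\widetilde S_n(z)=-\alpha/\sqrt{4-\alpha^2}$.

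The crux is the evaluation of the drift constant in \eqref{e:hlim}. Setting $D_n(\lambda)=\Xi_n^{(0)}[0,\lambda]-\Xi_n^{(0)}[-\lambda,0]$ and integrating by parts, $h_{n,\ell}=-D_n(\ell)/\ell+\int_\ell^\infty D_n(\lambda)\lambda^{-2}\,d\lambda$. Here $\mathbb E D_n(\lambda)$ is the signed discrepancy between the semicircle counting functions on the two sides of the centring level $\alpha\sqrt n$; undoing the microscopic rescaling turns $\lim_n\int_\ell^\infty\mathbb E D_n(\lambda)\lambda^{-2}\,d\lambda$ into $(4-\alpha^2)^{-1/2}$ times the principal-value Stieltjes transform of the semicircle law at the interior point $\alpha$, namely $-\alpha/(2\sqrt{4-\alpha^2})$. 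The centred fluctuation contributes nothing: by Theorem~\ref{boundvariance2017} its second moment is $O(\log(2+\lambda))$ uniformly in $n$, and $\sqrt{\log(2+\lambda)}\,\lambda^{-2}$ is integrable at infinity, so dominated convergence applies; the same computation with Proposition~\ref{estimatequadraticCbeta} and the translation invariance of $\mathbb Q_\beta$ shows $h_\ell$ exists for the limit measure and tends to $0$ as $\ell\to\infty$. Hence $\lim_\ell\lim_n h_{n,\ell}=-\alpha/(2\sqrt{4-\alpha^2})=:h_0$, and Theorem~\ref{topologytheorem} (with $\bar\gamma=1$) gives that $\Xi_n^{(1)}$ converges locally weakly to $S^{-1}(-\alpha/\sqrt{4-\alpha^2}-\bar\gamma h_0)=S^{-1}(-\alpha/(2\sqrt{4-\alpha^2}))$, $S$ being the Stieltjes transform of $\mathbb Q_\beta$ with mean-one weights, and that \eqref{e:lambda1}--\eqref{e:hlim} are inherited. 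Iterating over $k$ — at step $k$ the fresh weights and $g^{(k)}$ are independent of the current configuration, so Theorem~\ref{topologytheorem} applies again — yields joint local-weak convergence of $(\Xi_n^{(0)},\dots,\Xi_n^{(K)})$ for all $K$, hence convergence of the chains on $\mathbb R\times\mathbb N_0$. Since the Stieltjes Markov transition is unchanged when the weights and the level are multiplied by a common constant, the limiting chain — one-line marginal $\mathbb Q_\beta$, mean-one weights, level $-\alpha/(2\sqrt{4-\alpha^2})$ — is exactly the chain of Theorem~\ref{betainfinite} (mean-two weights $(4/\beta)\,\mathrm{Gamma}(\beta/2)$) at the fixed level $h=-\alpha/\sqrt{4-\alpha^2}$; as $\alpha$ runs over $(-2,2)$ every $h\in\mathbb R$ occurs.

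For the last assertion, fix $\beta=2$ and $h\in\mathbb R$, and set $\alpha=-2h/\sqrt{1+h^2}\in(-2,2)$, so $h=-\alpha/\sqrt{4-\alpha^2}$. By the first part, the GUE minor eigenvalue process, centred at $\alpha\sqrt n$ and rescaled to average spacing $2\pi$, converges to the Theorem~\ref{betainfinite} chain at level $h$. By \cite{ANVM}, the same process converges — in the normalisation that reconciles the densities $1/2\pi$ of $\mathrm{Sine}_2$ and $1/\pi$ of the single-line marginal of Boutillier's process, i.e.\ after the factor $2$ — to Boutillier's bead process with tilt parameter $\gamma=\alpha/2$. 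By uniqueness of the limit in distribution, after dividing the points by $2$ the Theorem~\ref{betainfinite} chain with $\beta=2$ and level $h$ coincides in law with Boutillier's bead process of parameter $\gamma=\alpha/2=-h/\sqrt{1+h^2}$.

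The main obstacle is the analysis of \eqref{e:hlim}: one must both extract the principal-value semicircle transform from the macroscopic-to-microscopic change of variables and show that the fluctuation part of $h_{n,\ell}$ is negligible uniformly in $n$ — precisely the role of the variance bound of Theorem~\ref{boundvariance2017} — and, in tandem, certify that the random rescaled configurations genuinely satisfy the deterministically phrased rigidity hypotheses of Theorem~\ref{topologytheorem} along the entire infinite corner chain. Everything else is either a direct application of the earlier results or a routine approximation.
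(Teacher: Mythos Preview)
Your proposal is correct and follows essentially the same route as the paper: Skorokhod coupling of the initial configuration and the weight arrays, verification of the hypotheses of Theorem~\ref{topologytheorem}, computation of the drift via the principal-value Stieltjes transform of the semicircle law (yielding $-\alpha/(2\sqrt{4-\alpha^2})$, then doubled to $h=-\alpha/\sqrt{4-\alpha^2}$ when passing to the mean-two weights of Theorem~\ref{betainfinite}), iteration over $k$ using the self-propagation clause of Theorem~\ref{topologytheorem}, and identification with Boutillier's process via \cite{ANVM} for $\beta=2$. Your direct coupling $\gamma_{n,k}=\gamma_k$ of the weights, legitimate because their law is already $n$-independent, is a mild simplification over the paper's separate tightness argument for the variables $Z_{n,k}$.

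One imprecision worth flagging: the phrase ``Chebyshev/Borel--Cantelli'' does not by itself deliver the \emph{uniform-in-$n$} rigidity bound \eqref{e:lambdasquared} that Theorem~\ref{topologytheorem} requires. Borel--Cantelli along $\ell$ for each fixed $n$ gives an almost-sure bound whose implicit constant depends on $n$, and there is no useful sum along $n$ to invoke. The paper handles this by showing that
\[
Y_n=\sup_{\ell\ge 0}(1+\ell)^{-3/4}\bigl(|\widetilde\Xi_n^{(0)}[0,\ell]|+|\widetilde\Xi_n^{(0)}[-\ell,0]|\bigr)
\]
is tight---via the $L^1$ bound on $\sum_\ell(1+\ell)^{-3/2}|\widetilde\Xi_n^{(0)}[0,\ell]|^2$ coming from Theorem~\ref{boundvariance2017}---and then \emph{including $Y_n$ in the Skorokhod coupling} along a further subsequence, so that the coupled version $Y'_n$ converges almost surely and is therefore almost surely bounded uniformly in $n$. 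You identify precisely this point as the main obstacle in your closing paragraph, so the repair is immediate; but the mechanism is tightness of a supremum functional rather than Borel--Cantelli.
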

\begin{proof}
By the result of Valk\'o and Vir\'ag, $\Xi_n^{(0)}$ converges in distribution to the $\operatorname{Sine}_{\beta}$ point process.

Hence, the family, indexed by $n$, of the distributions of $(\Xi_n^{(0)})_{n \geq 1}$, is tight in the space of probability measures on
$\mathcal{M}(\mathbb{R})$, $\mathcal{M}(\mathbb{R})$ being the space of locally finite measures on the Borel sets of $\mathbb{R}$, endowed with the topology of locally weak convergence.
Hence, for $\epsilon > 0$, there exists
$(C_K)_{K \in \mathbb{N}}$ such that with probability at least $1 - \epsilon$,
the number of points in $[-K,K]$
of $\Xi_n^{(0)}$ is at most $C_K$ for all $K \in \mathbb{N}$, independently of $n$.
Since the points of $\Xi^{(k)}_n$ interlace with those of
  $\Xi^{(k-1)}_n$, the condition just above is satisfied  with $\Xi^{(k)}_n$ instead of $\Xi^{(0)}_n$.
  Hence, the family, indexed by $n$, of the laws of $(\Xi^{(k)}_n)_{k \geq 0}$ is tight in the space of probability measures
  on $\mathcal{M}(\mathbb{R} \times \mathbb{N}_0)$,  $\mathcal{M}(\mathbb{R} \times \mathbb{N}_0)$ being the space of locally finite measures on $\mathbb{R} \times \mathbb{N}_0$, again endowed with the topology of locally weak convergence.
   From the tightness, it is enough to prove that the law of the Markov chain of Theorem  \ref{betainfinite} is the only possible limit for a subsequence of the laws of $(\Xi^{(k)}_n)_{k \geq 1}$.
Let us consider such a subsequence which converges in law.
We define the following random variable
$$Y_n := \sup_{\ell \geq 0}
( 1+ \ell)^{-3/4} (|\widetilde{\Xi}^{(0)}_n ([0, \ell])  |
+ |\widetilde{\Xi}^{(0)}_n ([-\ell, 0])  |)$$
where
$$\widetilde{\Xi}^{(0)}_n ([a,b]) :=
\Xi^{(0)}_n([a,b]) - N_{sc}
\left(\left[\alpha \sqrt{n} + \frac{a}{\sqrt{n(4-\alpha^2)}},\alpha \sqrt{n} + \frac{b}{\sqrt{n(4-\alpha^2)}} \right] \right),$$
for
$$N_{sc} (\Lambda_1, \Lambda_2)
:= \frac{n}{2 \pi} \int_{\Lambda_1/\sqrt{n}}^{\Lambda_2/\sqrt{n}}
\sqrt{(4-x^2)_+} dx.$$
The family $(Y_n)_{n \geq 0}$ is tight.
Indeed,
by Theorem \ref{boundvariance2017},
$$\mathbb{E} [(1+ \ell)^{-3/2}
(|\widetilde{\Xi}^{(0)}_n ([0, \ell])  |^2
+ |\widetilde{\Xi}^{(0)}_n ([-\ell, 0])|^2 )] =
O \left( (1+ \ell)^{-3/2}
\log \left( 2 + \sqrt{n} \frac{\ell}{\sqrt{n(4-\alpha^2)}}  \right) \right),$$
which shows that
$$\mathbb{E}
\left[ \sum_{\ell=0}^{\infty}
(1+ \ell)^{-3/2}
(|\widetilde{\Xi}^{(0)}_n ([0, \ell])  |
+ |\widetilde{\Xi}^{(0)}_n ([-\ell, 0])|)^2  \right] \leq C_{\alpha,\beta},$$
where $C_{\alpha,\beta} < \infty$ depends only on $\alpha$ and $\beta$ (in particular, not on $n$). This implies that $(Y_n)_{n \geq 1}$ is tight.

The point processes $\Xi_n^{(k)}$, $k \geq 0$, are constructed from $\Xi_n^{(0)}$, and families $\gamma_{n,k,k'}$ of weights, $\gamma_{n,k,k'}$ being
 the weight, involved in the construction  of $\Xi_n^{(k+1)}$,
  of the $(k')$-th nonnegative point
 of $\Xi_n^{(k)}$ if $k' > 0$, the $(1-k')$-th negative point of $\Xi_n^{(k)}$ if $k' \leq 0$.
 All the variables $\gamma_{n,k,k'}$  are i.i.d., distributed like $2/\beta$ times a Gamma variable of parameter $\beta/2$, and independent of $\Xi_n^{(0)}$.
 We can consider the variables
 $$Z_{n,k} = \sup_{m \geq 1} m^{-0.51}
 \left| m - \sum_{k' = 0}^{m-1} \gamma_{n,k,k'}\right| +
 \sup_{m \geq 1} m^{-0.51}
 \left| m - \sum_{k' = -m}^{-1} \gamma_{n,k,k'} \right|.$$
 By classical tail estimates of the Gamma variables, $Z_{n,k} < \infty$ almost surely, and since its law does not depend on $n$ and $k$, $(Z_{n,k})_{n  \geq 1, k \geq 0}$ is a tight family of random variables. Hence, $(Z_n := (Z_{n,k})_{k \geq 0} )_{n \geq 1}$ is a tight family of random variables on $\mathbb{R}^{\mathbb{N}_0}$, endowed with the $\sigma$-algebra generated by the sets
 $\{(z_k)_{k \geq 0}, z_0 \in A_0, z_1 \in A_1, \dots, z_p \in A_p\}$ for $p \geq 0$  and $A_j \in \mathcal{B} (\mathbb{R})$.

 Let us go back to our subsequence of $(\Xi_n^{(k)})_{k \geq 0}$ which converges in law. If we join
 $(\gamma_{n,k, k'})_{k \geq 0, k' \in \mathbb{Z}}$,
$Y_n$ and $Z_n$, we still get a tight family of probability measures on a suitable probability space.  Hence, we can find a sub-subsequence for which the family of random variables
$((\Xi_n^{(k)})_{k \geq 0}, (\gamma_{n,k, k'})_{k \geq 0, k' \in \mathbb{Z}},  Y_n, Z_n)$
converges in law, and a fortiori
$(\Xi_n^{(0)}, (\gamma_{n,k, k'})_{k \geq 0, k' \in \mathbb{Z}},  Y_n, Z_n)$
converges in law.
By Skorokhod representation theorem,
this family has the same law as some family
$(\Xi_n^{'(0)}, (\gamma'_{n,k, k'})_{k \geq 0, k' \in \mathbb{Z}},  Y'_n, Z'_n)$
which converges almost surely along a subsequence.  Note that
$Y'_n$ is function of $\Xi_n^{'(0)}$ and $Z'_n$ is function of the weights $\gamma'_{n,k, k'}$.
Since we know that $\Xi_n^{'(0)}$ converges in law to a $\operatorname{Sine}_{\beta}$ process, its almost sure limit is a simple point measure.

From the boundedness of $Y'_n$ along our subsequence, and Proposition \ref{estimatealmostsureCbeta} (which
implies a bound on the point distribution of the $\operatorname{Sine}_{\beta}$ process, and then the existence of $h_{\ell}$ and its vanishing limit when $\ell \rightarrow \infty$),
we deduce that the part of
Theorem \ref{topologytheorem} concerning
$\Xi_n^{'(0)}$ is satisfied, with
$$h = \underset{\ell \rightarrow \infty}{\lim} \underset{n \rightarrow \infty}{\lim}
h^{sc}_{n, \ell},$$
for
$$h^{sc}_{n, \ell}
= \int_{(-\infty, -\ell] \cup [\ell, \infty)}  \frac{1}{\lambda} d N_{sc} \left(
\alpha \sqrt{n} + \frac{\lambda}{\sqrt{n (4 - \alpha^2)}} \right)$$
We have
\begin{align*}
d N_{sc} \left(
\alpha \sqrt{n} + \frac{\lambda}{\sqrt{n (4 - \alpha^2)}} \right)
& = \frac{n}{2 \pi} d
\left(\int_{-\infty}^{\alpha +\left( \lambda/(n \sqrt{4 - \alpha^2})\right)} \sqrt{(4 - x^2)_+} dx \right)
\\ & = \frac{1}{2 \pi \sqrt{4 - \alpha^2}}
\sqrt{\left[4 - \left(\alpha +\left( \lambda/(n \sqrt{4 - \alpha^2})\right) \right)^2 \right]_+} d \lambda
\end{align*}
If we do a change of variable
$\lambda = \mu n \sqrt{4 - \alpha^2}$, we get
$$h_{n,\ell}^{sc}
= \int_{(-\infty, - \ell/ (n \sqrt{4 - \alpha^2})] \cup [\ell/ (n \sqrt{4 - \alpha^2}), \infty)}
 \frac{1} {2 \pi \sqrt{4 - \alpha^2}}
\sqrt{[4 - (\alpha + \mu)^2]_+} \frac{d \mu}{ \mu}.$$
Taking $n \rightarrow \infty$, we get a quantity independent of $\ell$, given by
$$h = \frac{1}{2 \pi \sqrt{4 - \alpha^2}}
\int_{\mathbb{R}} \sqrt{(4 - y^2)_+}
\frac{dy} {y - \alpha},$$
the integral in the neighborhood of $\alpha$ being understood as a principal value.
From the value of the Stieltjes transform of the semi-circle law, we deduce
$$h = - \frac{\alpha}{2 \sqrt{4  - \alpha^2}}.$$
From the boundedness of $Z'_{n,0}$, we deduce that the part of Theorem \ref{topologytheorem} concerning the weights is also satisfied. Finally, in this theorem, it is almost surely possible to take $\lambda^* = 0$, by the absolutely continuity of the
densities of the ensembles which are considered.

All the assumptions of the theorem are satisfied. If we denote by
$\Lambda_n^{'(0)}$ the measure constructed from $\Xi_n^{'(0)}$ and the weights
 $\gamma'_{n,0,k'}$ ($k' \in \mathbb{Z}$),
 and $\Lambda^{'(0)}$ the measure constructed from the a.s. limits of these points and weights, we deduce that for an independent standard Gaussian variable $g^{(0)}$,
 the function
  $$  z \mapsto - \frac{\alpha}{ \sqrt{4 - \alpha^2}}
- \int \frac{1}{ \lambda- z} d \Lambda_n^{'(0)} (\lambda),$$
and then also the function
 $$ z \mapsto - \frac{\alpha}{ \sqrt{4 - \alpha^2}}
+ \frac{g^{(0)}}{\sqrt{n(4 - \alpha^2)}}
- \frac{z}{n(4-\alpha^2)}
- \int \frac{1}{ \lambda- z} d \Lambda_n^{'(0)} (\lambda),$$
converges uniformly on compact sets, for the topology of the Riemann sphere given in Thorem \ref{topologytheorem}, to
the function
$$ z \mapsto -\frac{\alpha}{ \sqrt{4 - \alpha^2}}
- \int \frac{1}{ \lambda- z} d \Lambda^{'(0)} (\lambda) - h
= -\frac{\alpha}{2 \sqrt{4 - \alpha^2}}
- \int \frac{1}{ \lambda- z} d \Lambda^{'(0)} (\lambda) .$$
As in the proof of Theorem \ref{topologytheorem}, we deduce that the point process
$\Xi_n^{'(1)}$
 given by
$$ - \frac{\alpha}{ \sqrt{4 - \alpha^2}}
+ \frac{g^{(0)}}{\sqrt{n(4 - \alpha^2)}}
- \frac{z}{n(4-\alpha^2)}
- \int \frac{1}{ \lambda- z} d \Lambda_n^{'(0)} (\lambda) = 0,$$
locally weakly converges to
the point process $\Xi^{'(1)}$ given by
$$\underset{\ell \rightarrow \infty}{\lim} \int_{[-\ell, \ell]} \frac{1}{ \lambda- z} d \Lambda^{'(0)} (\lambda) = -\frac{\alpha}{2 \sqrt{4 - \alpha^2}}.$$

The points of $\Xi_n^{'(1)}$ and satisfy the assumptions of Theorem \ref{topologytheorem}, since they interlace with those of $\Xi_n^{'(0)}$. It is also the same for the weights $\gamma'_{n,1,k'}$ ($k' \in \mathbb{Z}$), by the boundedness of $Z'_n$. We then deduce that for an independent Gaussian variable $g^{(1)}$, the point process $\Xi_n^{'(2)}$ given by
$$- \frac{\alpha}{ \sqrt{4 - \alpha^2}}
+ \frac{g^{(1)}}{\sqrt{n(4 - \alpha^2)}}
- \frac{z}{n(4-\alpha^2)}
- \int \frac{1}{ \lambda- z} d \Lambda_n^{'(1)} (\lambda) = 0$$
locally weakly converges
to the process $\Xi^{'(2)}$ given  by
$$\underset{\ell \rightarrow \infty}{\lim} \int_{[-\ell, \ell]} \frac{1}{ \lambda- z} d \Lambda^{'(1)} (\lambda) = -\frac{\alpha}{2 \sqrt{4 - \alpha^2}},$$
where $\Lambda_n^{'(1)}$ is given by $\Xi_n^{'(0)}$ and the weights $\gamma'_{n,1,k'}$ and $\Lambda^{'(1)}$ are given by their limits.
We can then iterate the construction,
which gives a family of point processes
$\Xi_n^{'(k)}$ ($k \geq 0$), converging  to $\Xi^{'(k)}$.
From the way we do this construction, we check that   $(\Xi_n^{'(k)})_{k \geq 0}$ has the same law as  $(\Xi_n^{(k)})_{k \geq 0}$, and that
$\Xi^{'(k)}$ has the same law as the generalized bead process introduced in the present paper (with level lines at $-\alpha/2 \sqrt{4 - \alpha^2}$).
Hence, any subsequence of
$((\Xi_n^{(k)})_{k \geq 0})_{n \geq 1}$
converging in law has a sub-subsequence tending in law to the generalized bead process
By tightness, we deduce the convergence of  the whole sequence $((\Xi_n^{(k)})_{k \geq 0})_{n \geq 1}$.
 This gives the first part of the theorem, after doubling  the weights and the value of $h$.
 The second part is deduced by using the convergence of the GUE minors towards the bead process introduced by
 Boutillier, proven in \cite{ANVM}. The factor $2$ is due to the fact that the average density of points is $1/\pi$
 in \cite{Bou} and $1/2 \pi$ here.
 The value of the parameter $\gamma$ in \cite{Bou} ($a$ in \cite{ANVM}) corresponds to
 $\alpha/2$ (the bulk corresponds to the interval $(-1,1)$ in \cite{ANVM} and to $(-2,2)$ in the present paper).
 We then have
 $$h = - \frac{\alpha}{\sqrt{4 - \alpha^2}} = - \frac{\gamma}{\sqrt{1 - \gamma^2}},$$
 and finally
 $$\gamma = - \frac{h}{\sqrt{1 + h^2}}.$$
\end{proof}
\noindent {\bf Acknowledgments.} B.V. was supported by the Canada
Research Chair program, the NSERC Discovery Accelerator grant, the MTA 
Momentum Random Spectra research group, and the ERC consolidator grant 
648017 (Abert).
\bibliographystyle{halpha}
\bibliography{biblinvmain}

\end{document}